\newcommand{\me}{\mathbb{E}}
\newcommand{\mr}{\mathbb{R}}
\newcommand{\mn}{\mathbb{N}}
\newcommand{\mmp}{\mathbb{P}}
\newcommand{\tb}{\texttt{b}}
\newcommand{\mb}{\mathfrak{b}}
\DeclareMathOperator{\1}{\mathbbm{1}}
\newtheorem{thm}{Theorem}[section]
\newtheorem{lemma}[thm]{Lemma}
\newtheorem{cor}[thm]{Corollary}
\newtheorem{assertion}[thm]{Proposition}
\theoremstyle{definition}
\theoremstyle{remark}
\newtheorem{rem}[thm]{Remark}
\begin{document}

\title{Limit theorems for discounted convergent perpetuities}\date{}
\author{Alexander Iksanov\footnote{Faculty of Computer Science and Cybernetics, Taras Shevchenko National University of Kyiv, Ukraine; e-mail address:
iksan@univ.kiev.ua} \ \ \ Anatolii Nikitin\footnote{Faculty of Natural Sciences, Jan Kochanowski University of Kielce; e-mail address: anatolii.nikitin@ujk.edu.pl} \ \ \  Igor Samoilenko\footnote{Faculty of Computer Science and Cybernetics, Taras Shevchenko National University of Kyiv, Ukraine; e-mail address: isamoil@i.ua}}  

\maketitle
\begin{abstract}
\noindent Let $(\xi_1, \eta_1)$, $(\xi_2, \eta_2),\ldots$ be independent identically distributed $\mathbb{R}^2$-valued random vectors. We prove a strong law of large numbers, a functional central limit theorem and a law of the iterated logarithm for convergent perpetuities $\sum_{k\geq 0}b^{\xi_1+\ldots+\xi_k}\eta_{k+1}$ as $b\to 1-$. Under the standard actuarial interpretation, these results correspond to the situation when the actuarial market is close to the customer-friendly scenario of no risk.
\end{abstract}

\noindent Key words: cluster set; functional central limit theorem; law of the iterated logarithm; perpetuity; strong law of large numbers

\noindent 2000 Mathematics Subject Classification: Primary: 60F15,60F17 \\
\hphantom{2000 Mathematics Subject Classification: } Secondary: 60G50

\section{Introduction}

Let $(\xi_1, \eta_1)$, $(\xi_2,\eta_2),\ldots$ be independent copies of an $\mr^2$-valued random vector $(\xi,\eta)$ with arbitrarily dependent components. Denote by $(S_k)_{k\in\mn_0}$ (as usual, $\mn_0:=\mn\cup\{0\}$) the standard random walk with jumps $\xi_k$ defined by $S_0:=0$ and $S_k:=\xi_1+\ldots+\xi_k$ for $k\in\mn$. Whenever a random series $\sum_{k\geq 0}e^{-S_k}\eta_{k+1}$ converges a.s., its sum is called {\it perpetuity} because of the following actuarial application. Assuming, for the time being, that $\xi$ and $\eta$ are a.s.\ positive, we can interpret $\eta_k$ and $e^{-\xi_k}$ as the planned payment and the discount factor (risk) for year $k$, respectively. Then $\sum_{k\geq 0}e^{-S_k}\eta_{k+1}$ can be thought of as `the present value of a permanent commitment to make a payment ... annually into the future forever' (the phrase borrowed from p.~1196 in \cite{Goldie+Maller:2000}). When studying the aforementioned random series from purely mathematical viewpoint, the one-sided assumptions are normally omitted whereas the term `perpetuity' is still used. See the books \cite{Buraczewski et al:2016} and \cite{Iksanov:2016} for surveys of the area of perpetuities from two different perspectives.

In the present paper we investigate the asymptotic behavior as $b\to 1-$ of the convergent series $\sum_{k\geq 0}b^{S_k}\eta_{k+1}$ that we call {\it discounted convergent perpetuity}.
We intend to prove the basic limit theorems for the discounted convergent perpetuities: a strong law of large numbers, a functional central limit theorem and a law of the iterated logarithm. Getting back to the actuarial interpretation, these results describe the fluctuations of the present value when the actuarial market is close to the customer-friendly scenario of no risk.

A sufficient condition for the almost sure (a.s.) absolute convergence of the random series $\sum_{k\geq 0}b^{S_k}\eta_{k+1}$ with fixed $b\in (0,1)$ is $\me\xi\in (0,\infty)$ and $\me \log^+|\eta|<\infty$, see, for instance, Theorem 2.1 in \cite{Goldie+Maller:2000}. This sufficient condition holds, that is, the discounted perpetuity is well-defined for all $b\in (0,1)$, under the assumptions of all our results to be formulated soon.

We start with a strong law of large numbers. % As usual, `a.s.' is a shorthand for `almost surely'.
\begin{thm}\label{main1}
Assume that $\mu:=\me\xi\in (0,\infty)$ and $\me |\eta|<\infty$. Then
\begin{equation}\label{aux51}
\lim_{b\to 1-}(1-b)\sum_{k\geq 0}b^{S_k}\eta_{k+1}=\mu^{-1}{\tt m}\quad \text{{\rm a.s.}},
\end{equation}
where ${\tt m}:=\me \eta$.
\end{thm}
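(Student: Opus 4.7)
The approach I would take is a pathwise sandwich argument driven by two applications of the strong law of large numbers (SLLN): one for $S_k/k\to\mu$ and one for $n^{-1}\sum_{k=1}^n\eta_k\to {\tt m}$.

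The first step is to reduce to the case $\eta\ge 0$ a.s. Writing $\eta=\eta^+-\eta^-$, both parts are non-negative and integrable, and the general claim follows by subtracting the two instances of \eqref{aux51}. This reduction is needed because the sandwich in the next step uses the sign of $\eta_{k+1}$.

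Under $\eta\ge 0$, for $\mathbb{P}$-a.e.\ $\omega$ and any $\epsilon\in(0,\mu)$ the SLLN produces an index $K=K(\omega,\epsilon)$ such that $(\mu-\epsilon)k\le S_k\le(\mu+\epsilon)k$ for all $k\ge K$. Since $b\in(0,1)$ and $\eta_{k+1}\ge 0$, this yields the pathwise inequality
\begin{equation*}
b^{(\mu+\epsilon)k}\eta_{k+1}\le b^{S_k}\eta_{k+1}\le b^{(\mu-\epsilon)k}\eta_{k+1}\qquad(k\ge K).
\end{equation*}
To handle the two outer series, for any fixed $\alpha>0$ I set $c:=b^{\alpha}\in(0,1)$, so that $(1-b)/(1-c)\to 1/\alpha$ as $b\to 1-$. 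Combining the SLLN $n^{-1}\sum_{k=1}^n\eta_k\to {\tt m}$ a.s.\ with the classical Abel–Cesàro (Karamata) Tauberian lemma, applied pathwise, gives $(1-c)\sum_{k\ge 0}c^k\eta_{k+1}\to {\tt m}$ a.s.\ as $c\to 1-$, and hence
\begin{equation*}
(1-b)\sum_{k\ge 0}b^{\alpha k}\eta_{k+1}\longrightarrow \frac{{\tt m}}{\alpha}\quad\text{a.s.}
\end{equation*}
(Absolute convergence of the series for every $c\in(0,1)$ follows from $\me|\eta|<\infty$.)

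Finally, the finitely many initial terms $k<K(\omega)$ contribute an amount of order $1-b$ to the full sum, and therefore vanish in the limit. Applying the sandwich with $\alpha=\mu\pm\epsilon$ then gives
\begin{equation*}
\frac{{\tt m}}{\mu+\epsilon}\le\liminf_{b\to 1-}(1-b)\sum_{k\ge 0}b^{S_k}\eta_{k+1}\le\limsup_{b\to 1-}(1-b)\sum_{k\ge 0}b^{S_k}\eta_{k+1}\le\frac{{\tt m}}{\mu-\epsilon}
\end{equation*}
a.s., and letting $\epsilon\to 0$ establishes \eqref{aux51}. The only real subtlety is the necessity of the sign-decomposition at the outset; once $\eta\ge 0$ is in force the argument is deterministic modulo two SLLNs, and no martingale or renewal-theoretic input is required.
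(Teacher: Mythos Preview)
Your argument is correct. The sign decomposition is indeed the only point requiring care, and once $\eta\ge 0$ is in force the pathwise sandwich together with the Abelian lemma $(1-c)\sum_{k\ge 0}c^k\eta_{k+1}\to{\tt m}$ yields the result.

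Your route is genuinely different from the paper's. The paper does \emph{not} decompose $\eta$ into positive and negative parts; instead it first proves the special case $\eta\equiv 1$, namely $(1-b)\sum_{k\ge 0}b^{S_k}\to\mu^{-1}$, via the renewal-theoretic input $x^{-1}M(x)\to\mu^{-1}$ a.s.\ (Lai, 1975) combined with integration by parts. It then passes to general $\eta$ by Abel summation,
\[
\sum_{k\ge 1}b^{S_{k-1}}\eta_k=\sum_{k\ge 1}k\bigl(b^{S_{k-1}}-b^{S_k}\bigr)\,(k^{-1}T_k),
\]
and applies a Toeplitz-type summability lemma (Hardy) with weights $c_k(b)=\mu(1-b)k(b^{S_{k-1}}-b^{S_k})$ and $s_k=k^{-1}T_k\to{\tt m}$. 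Verifying condition~(i) of that lemma (uniform boundedness of $\sum_k|c_k(b)|$) costs some work and uses the mean-value estimate $|b^{S_{k-1}}-b^{S_k}|\le C\,b^{(\mu-\varepsilon)k}|\log b|\,|\xi_k|$.

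What each approach buys: your sandwich argument is shorter, avoids the renewal input and the Toeplitz machinery, and reduces everything to two SLLNs plus the classical power-series Abelian lemma. The paper's approach does not need the sign splitting, handles the $\eta$-dependence through a single summability transform, and produces as a by-product the intermediate estimates (the mean-value bound on $|b^{S_{k-1}}-b^{S_k}|$ and the relation $(1-b)\sum b^{S_k}\to\mu^{-1}$) that are reused repeatedly in the later proofs of Theorems~\ref{main2} and~\ref{main}.
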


Throughout the paper we write $\overset{\mmp}{\to}$ to denote convergence in probability, and $\Rightarrow$ and ${\overset{{\rm
d}}\longrightarrow}$ to denote weak convergence in a function space and weak convergence
of one-dimensional distributions, respectively. Also, we denote by $D(0,\infty)$ the Skorokhod space of right-continuous functions defined on
$(0,\infty)$ with finite limits from the left at positive points. We proceed by giving a functional central limit theorem.
\begin{thm}\label{main2}
Assume that $\mu=\me \xi\in (0,\infty)$, $\me \eta=0$ and ${\tt s}^2:={\rm Var}\,\eta\in (0,\infty)$. Then, as $b\to 1-$,
\begin{equation}\label{clt22}
\Big((1-b^2)^{1/2}\sum_{k\geq 0}b^{uS_k}\eta_{k+1}\Big)_{u>0} ~\Rightarrow~ (2{\tt s}^2\mu^{-1})^{1/2}\Bigg(\int_{[0,\,\infty)}e^{-uy}{\rm d}B(y)\Bigg)_{u>0}
\end{equation}
in the $J_1$-topology on $D(0,\infty)$, where $(B(t))_{t\geq 0}$ is a standard Brownian motion.
\end{thm}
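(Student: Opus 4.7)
My plan is to combine convergence of finite-dimensional distributions (via the Cram\'er--Wold device and a martingale central limit theorem) with tightness in the $J_1$-topology on $D(0,\infty)$ (via a second-moment estimate for the increments). Set $X_b(u):=(1-b^2)^{1/2}\sum_{k\geq 0}b^{uS_k}\eta_{k+1}$. The defining series converges absolutely (since $S_k\to\infty$ a.s.\ gives geometric decay past a random index), so $X_b$ is a.s.\ smooth in $u\in(0,\infty)$; the limit process $u\mapsto\int_{[0,\infty)}e^{-uy}{\rm d}B(y)$ is also a.s.\ continuous. It therefore suffices to prove tightness in $C[\epsilon,T]$ for every $0<\epsilon<T<\infty$, and this will imply the claimed $J_1$-convergence in $D(0,\infty)$.

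\textbf{Finite-dimensional distributions.} Let $\mathcal{F}_k:=\sigma((\xi_1,\eta_1),\ldots,(\xi_k,\eta_k))$. Since $b^{uS_k}$ is $\mathcal{F}_k$-measurable while $\eta_{k+1}$ is independent of $\mathcal{F}_k$ with $\me\eta=0$, the partial sums of $X_b(u)$ form a square-integrable martingale. By Cram\'er--Wold, the joint convergence at $u_1,\ldots,u_m>0$ reduces to the asymptotic normality of $\sum_{l=1}^m c_l X_b(u_l)$ with target variance $V:=\frac{2{\tt s}^2}{\mu}\sum_{i,j}\frac{c_ic_j}{u_i+u_j}$, which matches the variance of the corresponding linear combination of the right-hand side of \eqref{clt22}. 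A martingale CLT (e.g.\ of Hall--Heyde type) requires (i) convergence in probability of the conditional variance ${\tt s}^2(1-b^2)\sum_{i,j}c_ic_j\sum_{k\geq 0}b^{(u_i+u_j)S_k}\to V$ and (ii) a conditional Lindeberg condition. For (i), I would apply Theorem \ref{main1} with the trivial choice $\eta_k\equiv 1$ (so ${\tt m}=1$) and $b$ replaced by $b^{u_i+u_j}$ to obtain $(1-b^{u_i+u_j})\sum_{k\geq 0}b^{(u_i+u_j)S_k}\to\mu^{-1}$ a.s., then multiply by $(1-b^2)/(1-b^{u_i+u_j})\to 2/(u_i+u_j)$ to identify the limit. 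For (ii), the independence of the $\mathcal{F}_k$-measurable $b^{uS_k}$ from $\eta_{k+1}$ reduces conditional Lindeberg to the uniform integrability $\me[\eta^2\mathbbm{1}_{\{|\eta|>M\}}]\to 0$, the a.s.-finite set $\{k\colon S_k<0\}$ being handled separately via $\min_{k\geq 0}S_k>-\infty$ a.s.

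\textbf{Tightness and the main obstacle.} Provided $\phi(s):=\me b^{s\xi}$ is finite in a neighbourhood of $2u$ (a condition one first enforces by truncating $\xi$ and later removes by an $L^2$-approximation of $X_b$), expanding the square and using $\me\eta=0$, $\me\eta^2={\tt s}^2$, and independence gives
\begin{equation*}
\me(X_b(u)-X_b(v))^2={\tt s}^2(1-b^2)\left[\frac{1}{1-\phi(2u)}-\frac{2}{1-\phi(u+v)}+\frac{1}{1-\phi(2v)}\right].
\end{equation*}
The bracket is the second divided difference of $s\mapsto(1-\phi(s))^{-1}$ at the equispaced points $s=2u,u+v,2v$ (step $|u-v|$). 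The main obstacle is to extract a uniform-in-$b$ bound of order $(u-v)^2$ from this expression, because each of the three summands blows up like $(1-b)^{-1}$ as $b\to 1-$ and the required bound is purely a cancellation. My approach is to iterate the identity $\frac{1}{1-\phi(\alpha)}-\frac{1}{1-\phi(\beta)}=\frac{\phi(\alpha)-\phi(\beta)}{(1-\phi(\alpha))(1-\phi(\beta))}$, write $\phi(\alpha)-\phi(\beta)=\int_\beta^\alpha \phi'(s)\,{\rm d}s$, and use the asymptotics $1-\phi(s)\asymp s\mu(1-b)$ and $\phi'(s)=O(1-b)$ (both coming from $\me\xi<\infty$ via dominated convergence) to obtain $\me(X_b(u)-X_b(v))^2\leq C_{[\epsilon,T]}(u-v)^2$ uniformly in $b$ close to $1$, for $u,v\in[\epsilon,T]$. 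Kolmogorov--Chentsov then delivers tightness in $C[\epsilon,T]$, completing the proof.
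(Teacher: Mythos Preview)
Your treatment of the finite-dimensional distributions is essentially the paper's: Cram\'er--Wold plus a martingale CLT, with the conditional variance handled via Theorem~\ref{main1} and the Lindeberg condition reduced to a tail truncation of $\eta$ after splitting off the finitely many indices with $S_k\leq 0$.

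The tightness argument, however, is genuinely different from the paper's and has a gap. You propose to compute $\me(X_b(u)-X_b(v))^2$ in closed form via $\phi(s)=\me b^{s\xi}$ and then extract a bound $C(u-v)^2$ from the second divided difference of $s\mapsto(1-\phi(s))^{-1}$ using only $1-\phi(s)\asymp s\mu(1-b)$ and $\phi'(s)=O(1-b)$. But first-order information on $\phi$ is not enough: iterating your identity once gives two terms each of size $O(|u-v|/(1-b))$, and to show that their \emph{difference} carries an extra factor $|u-v|$ you must control the second difference $\phi(2u)-2\phi(u+v)+\phi(2v)$, i.e.\ essentially $\phi''$. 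Since $\phi''(s)=(\log b)^2\,\me[\xi^2 b^{s\xi}]$, this requires $\me\xi^2<\infty$, which is \emph{not} assumed (the paper explicitly contrasts its hypotheses with Vervaat's Proposition~\ref{verv}, which does need a second moment of $\xi$). Truncating $\xi$ on both sides would give a bounded $\xi^{(M)}$ and hence the needed expansion, but then the promised ``$L^2$-approximation'' of $X_b$ by the process built from the truncated walk is not a pointwise-in-$u$ statement: for tightness you would need $\sup_{u\in[\epsilon,T]}|X_b(u)-X_b^{(M)}(u)|\to 0$ in probability uniformly in $b$ near $1$, and you have not indicated how to obtain this.

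The paper avoids all of these moment issues by a pathwise decomposition rather than a moment calculation. It sets $\mathcal{R}_k(\delta)=\{|S_k-\mu k|>\delta k\}$; by the strong law only finitely many $\mathcal{R}_k(\delta)$ occur, so the contribution of those indices to $X_b(u)$ vanishes uniformly in $u$ a.s.\ as $b\to1-$. On the complement $\mathcal{R}_k^c(\delta)$ one has the deterministic sandwich $(\mu-\delta)k\le S_k\le(\mu+\delta)k$, which via the mean value theorem yields the pointwise bound $|b^{uS_k}-b^{vS_k}|\1_{\mathcal{R}_k^c(\delta)}\le C\,b^{(c/2)(\mu-\delta)k}|u-v|$ and hence $\me\big(\sum_k(b^{uS_k}-b^{vS_k})\1_{\mathcal{R}_k^c(\delta)}\eta_{k+1}\big)^2\le A(u-v)^2$ without any appeal to moments of $\xi$ beyond the first. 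This is the missing idea in your tightness step.
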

\begin{rem}
The limit process in Theorem \ref{main2} is an a.s.\ continuous Gaussian process on $(0,\infty)$ with covariance
\begin{equation}\label{limit}
\me \int_{[0,\,\infty)}e^{-uy}{\rm d}B(y)\int_{[0,\,\infty)}e^{-vy}{\rm d}B(y)=\frac{1}{u+v},\quad u,v>0.
\end{equation}
Such a process has appeared in the recent articles \cite{Buraczewski+Dovgay+Iksanov:2020}, \cite{Iksanov+Kabluchko:2018} and \cite{Kabluchko:2019}. The latter paper provides additional references.
\end{rem}

Putting in \eqref{clt22} $u=1$ and using \eqref{limit} with $u=v=1$ we obtain a one-dimensional central limit theorem.
\begin{cor}\label{cormain2}
Under the assumptions of Theorem \ref{main2}, as $b\to 1-$,
$$(1-b^2)^{1/2}\sum_{k\geq 0}b^{S_k}\eta_{k+1}~{\overset{{\rm d}}\longrightarrow}~({\tt s}^2 \mu^{-1})^{1/2}\, {\rm Normal}(0,1),%\quad b\to 1-,
$$ where ${\rm Normal}(0,1)$ denotes a random variable with the standard normal distribution.
\end{cor}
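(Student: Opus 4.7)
The plan is to read off the one-dimensional marginal at $u=1$ directly from the functional statement of Theorem \ref{main2}. First, I would invoke Theorem \ref{main2} itself, which asserts weak convergence in the $J_1$-topology on $D(0,\infty)$ of the rescaled process $\bigl((1-b^2)^{1/2}\sum_{k\geq 0}b^{uS_k}\eta_{k+1}\bigr)_{u>0}$ to $(2{\tt s}^2\mu^{-1})^{1/2}\bigl(\int_{[0,\infty)}e^{-uy}{\rm d}B(y)\bigr)_{u>0}$.

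Next, I would use the standard fact that, in the $J_1$-topology on $D(0,\infty)$, the coordinate projection $x\mapsto x(u_0)$ is continuous at every path $x$ that is itself continuous at $u_0$. Since the remark following Theorem \ref{main2} records that the limit process is a.s.\ continuous on $(0,\infty)$, the evaluation at $u=1$ is a.s.\ a continuous functional of the limit. The continuous mapping theorem then yields the one-dimensional weak convergence
\[
(1-b^2)^{1/2}\sum_{k\geq 0}b^{S_k}\eta_{k+1}~{\overset{{\rm d}}\longrightarrow}~(2{\tt s}^2\mu^{-1})^{1/2}\int_{[0,\infty)}e^{-y}{\rm d}B(y).
\]

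The final step is to identify the distribution of the right-hand side. The stochastic integral $\int_{[0,\infty)}e^{-y}{\rm d}B(y)$ is a centered Gaussian random variable whose variance, read off from \eqref{limit} at $u=v=1$, equals $1/(1+1)=1/2$. Multiplying by the deterministic factor $(2{\tt s}^2\mu^{-1})^{1/2}$ therefore produces a centered normal random variable with variance ${\tt s}^2\mu^{-1}$, which is distributionally the same as $({\tt s}^2\mu^{-1})^{1/2}\,{\rm Normal}(0,1)$.

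Since the corollary is a direct specialization of an already-proved functional theorem, I do not expect any genuine obstacle. The only point deserving a word of care is that coordinate evaluation is not globally continuous on $D(0,\infty)$ in the $J_1$ topology, so one must justify its use at $u=1$; this is immediate here from the a.s.\ continuity of the limit process noted in the remark.
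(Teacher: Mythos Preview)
Your proposal is correct and mirrors the paper's own derivation: it obtains the corollary by specializing the functional convergence \eqref{clt22} to $u=1$ and reading off the variance from \eqref{limit} at $u=v=1$. Your added remark about continuity of the coordinate projection at $u=1$ (via a.s.\ continuity of the limit) is a legitimate refinement of what the paper leaves implicit.
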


Finally, we are interested in the rate of a.s.\ convergence in Theorem \ref{main1} when ${\tt m}=0$ which is expressed by a law of the iterated logarithm. A hint concerning the form of this law is given by the central limit theorem, Corollary \ref{cormain2}.  For a family $(x_t)$ we denote by $C((x_t))$ the set of its limit points.
\begin{thm}\label{main}
Assume that $\mu=\me\xi\in (0,\infty)$, % ${\rm Var}\,\xi \in [0,\infty)$,
$\me \eta=0$ and ${\tt s}^2={\rm Var}\,\eta \in (0,\infty)$. Then
\begin{equation}\label{auxlimsup}
{\lim\sup\,(\lim
\inf)}_{b\to 1-}\Big(\frac{1-b^2}{\log\log \frac{1}{1-b^2}}\Big)^{1/2}\sum_{k\geq 0}b^{S_k}\eta_{k+1}=+(-)(2{\tt s}^2 \mu^{-1})^{1/2}\quad\text{{\rm a.s.}}
\end{equation}
In particular, $$C\bigg(\bigg(\Big(\frac{1-b^2}{2{\tt s}^2\mu^{-1}\log\log \frac{1}{1-b^2}}\Big)^{1/2}\sum_{k\geq 0}b^{S_k}\eta_{k+1}: b\in ((1-e^{-1})^{1/2},1)\bigg)\bigg)=[-1,1]\quad\text{{\rm a.s.}}$$
\end{thm}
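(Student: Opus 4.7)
The plan is to reduce the problem to a classical Abel-type law of the iterated logarithm (LIL) for sums of i.i.d.\ centred random variables, by replacing the random discount factor $b^{S_k}$ by its deterministic mean-rate analogue $b^{\mu k}$. To this end, set $Y(b):=\sum_{k\geq 0}b^{S_k}\eta_{k+1}$, $Z(b):=\sum_{k\geq 0}b^{\mu k}\eta_{k+1}$ and $R(b):=Y(b)-Z(b)$. Since $\eta_{k+1}$ is independent of $\mathcal{F}_k:=\sigma((\xi_j,\eta_j)\colon j\leq k)$ and has zero mean, the partial sums of $Y(b)$, $Z(b)$ and $R(b)$ are all martingales; orthogonality gives $\me R(b)^2={\tt s}^2\sum_{k\geq 0}\me(b^{S_k}-b^{\mu k})^2$. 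Factoring $b^{S_k}-b^{\mu k}=b^{\mu k}(b^{S_k-\mu k}-1)$, using the elementary bound $|b^u-1|\leq |u|(-\log b)\,b^{-|u|}$, and then combining $\me(S_k-\mu k)^2=k\,{\rm Var}(\xi)$ with $\sum_{k\geq 0}kb^{2\mu k}\sim(2\mu(1-b))^{-2}$ as $b\to 1-$, one obtains $\me R(b)^2=O(1)$. Thus $(1-b^2)^{1/2}R(b)\to 0$ in $L^2$, which is more than enough in $L^2$ at the LIL scale $\sqrt{\log\log(1/(1-b^2))/(1-b^2)}$. Upgrading this to almost-sure negligibility would be carried out via Borel--Cantelli along the geometric sub-sequence $1-b_n^2=e^{-n}$, combined with Doob's maximal inequality to control the continuous-parameter oscillations of $R(b)$ on each interval $(b_n,b_{n+1})$.

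Once $R(b)$ is discarded, the problem reduces to an Abel-type LIL for $Z(b)$. With $c:=b^\mu$, the series $\sum_{k\geq 0}c^k\eta_{k+1}$ is the Abel transform of the i.i.d.\ centred sequence $(\eta_{k+1})$, and the Hartman--Wintner--Strassen LIL for the partial sums $T_n=\eta_1+\cdots+\eta_n$, applied via the Abel identity $\sum_{k\geq 0}c^k\eta_{k+1}=(1-c)\sum_{k\geq 1}c^{k-1}T_k$ together with a Strassen-type continuous-mapping argument, yields
$$
\limsup_{c\to 1-}\Big(\frac{1-c^2}{\log\log(1/(1-c^2))}\Big)^{1/2}\sum_{k\geq 0}c^k\eta_{k+1}={\tt s}\sqrt{2}\quad\text{a.s.,}
$$
and similarly $\liminf=-{\tt s}\sqrt 2$. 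The change of variables $1-c^2=1-b^{2\mu}\sim\mu(1-b^2)$ as $b\to 1-$ converts the constant ${\tt s}\sqrt 2$ into $(2{\tt s}^2/\mu)^{1/2}$, giving \eqref{auxlimsup}.

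The cluster-set statement then follows from the two-sided LIL combined with almost-sure continuity of the normalized process $f(b):=((1-b^2)/(2{\tt s}^2\mu^{-1}\log\log(1/(1-b^2))))^{1/2}Y(b)$ on $((1-e^{-1})^{1/2},1)$. Continuity of $b\mapsto Y(b)$ on $(0,1)$ holds because the series converges uniformly on compact sub-intervals (each summand is continuous in $b$, and the tails are controlled by the uniform second-moment estimate). Since $f$ is continuous on a connected interval with $\limsup_{b\to 1-}f(b)=1$ and $\liminf_{b\to 1-}f(b)=-1$ a.s., the cluster set at $b=1-$ is a closed, connected subset of $[-1,1]$ containing both endpoints, hence equals $[-1,1]$. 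The principal technical obstacle is the first step: lifting the $L^2$-negligibility of $R(b)$ to almost-sure negligibility at the LIL scale, which requires a careful combination of Borel--Cantelli along a discrete sub-sequence with maximal-inequality control of the continuous-parameter oscillations; the subsequent Abel LIL and the intermediate-value argument for the cluster set are technically more routine.
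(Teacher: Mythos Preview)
Your reduction to the deterministic Abel series via $R(b)=Y(b)-Z(b)$ has a genuine gap at the very first step, and it is not merely a matter of filling in details.

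The theorem assumes only $\mu=\me\xi\in(0,\infty)$; there is no hypothesis that ${\rm Var}\,\xi<\infty$, let alone that $\xi$ has exponential moments. But your $L^2$ computation uses both. First, you write ``combining $\me(S_k-\mu k)^2=k\,{\rm Var}(\xi)$'' --- this quantity may be infinite. Worse, the bound $|b^u-1|\leq |u||\log b|\,b^{-|u|}$ that you invoke introduces the factor $b^{-2|S_k-\mu k|}$ inside the expectation, and $\me\big[(S_k-\mu k)^2 b^{-2|S_k-\mu k|}\big]$ is not $k\,{\rm Var}(\xi)$: controlling it requires finiteness of $\me e^{t\xi}$ for $t$ in a neighbourhood of $0$. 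In fact, under the stated hypotheses even $\me Y(b)^2={\tt s}^2\sum_{k\geq 0}\me b^{2S_k}$ can be infinite (take $\xi^-$ without exponential moments, so that $\me b^{2\xi}=\infty$ for every $b\in(0,1)$). Thus $\me R(b)^2=O(1)$ is simply false in the generality of the theorem, and the whole $L^2$ route collapses.

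Even granting extra moments on $\xi$, your proposed almost-sure upgrade is underspecified. The geometric grid $1-b_n^2=e^{-n}$ has $(1-b_{n+1})/(1-b_n)\to e^{-1}$, so adjacent points are far apart on the natural scale, and $b\mapsto R(b)$ is not a martingale in $b$, so Doob's inequality does not apply to $\sup_{b\in[b_n,b_{n+1}]}|R(b)|$ in any direct way. Controlling continuous-parameter oscillations at the LIL scale is exactly where the hard work lies; the paper needs carefully designed sequence classes $\mathbb{B}$ (with $(\tb_{n+1}-\tb_n)/(1-\tb_n)=o((\log\log(1-\tb_n)^{-1})^{-3/2})$) and $\mathbb{B}^\ast$ to make the interpolation go through.

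The paper's proof takes a different route that avoids second moments of $\xi$ altogether: rather than comparing $b^{S_k}$ with $b^{\mu k}$ in $L^2$, it uses the pathwise SLLN bound $b^{S_k}\leq b^{\delta k}$ for $k$ large to reduce to deterministic weights \emph{almost surely}, then follows the Bovier--Picco/Picco--Vares scheme (truncation at levels $N_{1,\delta,\theta}(b)$, $N_{2,\delta}(b)$; an exponential supermartingale inequality for the truncated martingale differences to get the upper half; a block construction with the martingale CLT and the converse Borel--Cantelli lemma for the lower half). Your cluster-set argument via continuity and the intermediate value theorem is, however, the same as the paper's.
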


\section{Related literature}

\noindent {\sc Random power series}. The random power (or geometric) series $\sum_{k\geq 0}b^k\eta_{k+1}$ for $b\in(0,1)$ is a rather particular case of a discounted convergent perpetuity which corresponds to the degenerate random walk $S_k=k$ for $k\in\mn_0$. In this section we first discuss known counterparts of our main results for the random power series.

\noindent {\it Law of large numbers}. Under the assumption $\me |\eta|<\infty$, the following strong law of large numbers can be found in Theorem 1 of \cite{Lai:1974}
\begin{equation}\label{aux152}
\lim_{b\to 1-}(1-b)\sum_{k\geq 0}b^k \eta_{k+1}={\tt m}\quad \text{a.s.},
\end{equation}
where ${\tt m}=\me \eta$.

\noindent {\it Central limit theorem}. Under the assumption $\me |\eta|^3<\infty$ Theorem 1 in \cite{Gerber:1971} proves a Berry-Ess\'{e}en inequality which entails
$$(1-b^2)^{1/2}\Big(\sum_{k\geq 0}b^k \eta_{k+1}-\frac{{\tt m}}{1-b}\Big)~{\overset{{\rm d}}\longrightarrow}~{\tt s}\, {\rm Normal}(0,1),\quad b\to 1-,$$ where ${\tt s}^2={\rm Var}\,\eta\in (0,\infty)$. Theorem 4.1 in \cite{Whitt:1972} is a functional limit theorem in the Skorokhod space for the process $(\sum_{k=0}^{\lfloor (1-b)^{-1}t \rfloor} b^k\eta_{k+1})_{t\geq 0}$, properly normalized and centered, as $b\to 1-$. Here and hereafter, $\lfloor x\rfloor$ denotes the integer part of real $x$. The corresponding limit process is a time-changed Brownian motion.

\noindent {\it Law of iterated logarithm}. It was proved in Theorem 3 of \cite{Gaposhkin:1965} that $${\lim\sup}_{b\to 1-}\Big(\frac{1-b^2}{\log\log\frac{1}{1-b^2}}\Big)^{1/2}\sum_{k\geq 0}b^k \eta_{k+1}=2^{1/2}{\tt s}$$ for centered bounded $\eta_k$ with variance ${\tt s}^2$. In Theorem 2 of \cite{Lai:1974} this limit relation was stated without proof, for not necessarily bounded $\eta_k$. Our Theorem \ref{main} is an analogue of Theorem 1.1 in \cite{Bovier+Picco:1993} dealing with the random power series. In Theorem 1.1 of \cite{Picco+Vares:1994} the sequence $(\eta_k)_{k\in\mn}$ is stationary, conditionally centered and ergodic with $\me \eta_1^2<\infty$. In this more general setting the authors prove a counterpart of \eqref{auxlimsup} for the corresponding random power series. Another proof in both settings based on a strong approximation result is given in Theorem 2.1 of \cite{Zhang:1997}. See also \cite{Fu+Huang:2016} and \cite{Stoica:2003} for related results.

Although the random power series is a toy example of perpetuities, transferring results from the former to the latter may be a challenge. To justify this claim, we only mention that while necessary and sufficient conditions for the a.s.\ convergence of random power series can be easily obtained (just use the Cauchy root test in combination with the Borel-Cantelli lemma), the corresponding result for perpetuities is highly non-trivial, see Theorem 2.1 in \cite{Goldie+Maller:2000} and its proof. The reason is clear: the random power series is a weighted sum of independent random variables, whereas it is not the case for perpetuities.

Investigation of (general) weighted sums of independent identically distributed random variables has been and still is a rather popular trend of research. We refrain from giving a survey and only mention recent contributions \cite{Aymone:2019, Aymone+Frometa+Misturini:2020+} in which a random Dirichlet series is analyzed.

\noindent {\sc Discounted perpetuities}. As far as we know, Theorems \ref{main1}, \ref{main2} and \ref{main} are new. Under the additional assumption $\me\xi^2<\infty$ (we only require $\me\xi\in(0,\infty)$) our Corollary \ref{cormain2} follows from Theorem 6.1 in \cite{Vervaat:1979} which we state as Proposition \ref{verv} for reader's convenience.
\begin{assertion}\label{verv}
Assume that $\mu=\me\xi \in (0, \infty)$, $\sigma^2={\rm Var}\,\xi\in [0,\infty)$, ${\tt s}^2={\rm Var}\, \eta\in [0,\infty)$, $\sigma^2+{\tt s}^2>0$. Then $$\alpha^{-1/2}\Big(\sum_{k\geq 1}e^{-\alpha S_{k-1}}\eta_k-\alpha {\tt m}\mu^{-1}\Big)~{\overset{{\rm d}}\longrightarrow}~v\, {\rm Normal}(0,1),\quad \alpha\to\infty,$$ where ${\rm Normal}(0,1)$ denotes a random variable with the standard normal distribution, ${\tt m}=\me \eta$, $v^2:=2^{-1}\sigma^2\mu^{-3}{\tt m}^2+\gamma {\tt m}\mu^{-2}+ 2^{-1}\sigma^2\mu^{-1}$ and $\gamma:=\me \xi \eta-\mu{\tt m}\in\mr$.
\end{assertion}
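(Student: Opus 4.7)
The plan is to recast $P(\alpha):=\sum_{k\geq 1}e^{-\alpha S_{k-1}}\eta_k$ as a Laplace-type functional of the bivariate random walk $(S_n,\eta_1+\cdots+\eta_n)$ and then to pass to the limit via the bivariate Donsker functional central limit theorem combined with a continuous-mapping step. Let $V(t):=\sum_{k\geq 1}\eta_k\1_{\{S_{k-1}\leq t\}}$, so that $P(\alpha)=\int_{[0,\infty)}e^{-\alpha t}\,dV(t)=\alpha\int_0^\infty e^{-\alpha t}V(t)\,dt$; the integration by parts is justified because $V(t)/t\to{\tt m}/\mu$ a.s., by the elementary renewal theorem combined with the strong law of large numbers. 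Writing $V(t)=(t/\mu){\tt m}+R(t)$ reduces the problem to analyzing the centered quantity $\alpha\int_0^\infty e^{-\alpha t}R(t)\,dt$.

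Next, I would apply Donsker's theorem to the i.i.d.\ vectors $(\xi_k-\mu,\eta_k-{\tt m})$: the rescaled bivariate walk converges weakly in $D[0,\infty)^2$ to a correlated two-dimensional Brownian motion $(W_1,W_2)$ with covariance matrix $\bigl(\begin{smallmatrix}\sigma^2 & \gamma\\ \gamma & {\tt s}^2\end{smallmatrix}\bigr)$. An Anscombe-type random time change by the renewal counting function $N(t):=\#\{k\geq 0:S_k\leq t\}$ transforms this into a functional CLT for the remainder: the rescaled process $\bigl(\alpha^{1/2}R(u/\alpha)\bigr)_{u\geq 0}$ converges weakly in $D(0,\infty)$ to a centered Gaussian process $U(u)$ that is an explicit linear combination of the coordinates of $(W_1,W_2)$ evaluated at time $u/\mu$.

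Combining these ingredients, the substitution $s=\alpha t$ turns the centered perpetuity into $\int_0^\infty e^{-s}\cdot\alpha^{1/2}R(s/\alpha)\,ds$. Since $f\mapsto\int_0^\infty e^{-s}f(s)\,ds$ is continuous on suitable subsets of $D(0,\infty)$, the continuous mapping theorem together with the functional limit of the preceding step yields the Gaussian limit $\int_0^\infty e^{-s}U(s)\,ds$. Its variance can be evaluated using elementary identities such as $\int_0^\infty\!\int_0^\infty e^{-(s+t)}(s\wedge t)\,ds\,dt=1/2$, and the resulting value matches the stated $v^2=2^{-1}\sigma^2\mu^{-3}{\tt m}^2+\gamma{\tt m}\mu^{-2}+2^{-1}\sigma^2\mu^{-1}$.

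The main obstacle will be the tail of the improper integral: the rescaled remainder $\alpha^{1/2}R(\cdot/\alpha)$ converges as a stochastic process only on compact intervals, while the limiting Gaussian process $U$ grows like $\sqrt{u}$ at infinity. I would control this by truncating at a large level $A$ and bounding ${\rm Var}\bigl(\sum_{k:S_{k-1}>A/\alpha}e^{-\alpha S_{k-1}}\eta_k\bigr)\leq C\alpha^{-1}e^{-cA}$ via an exponential tail estimate for the renewal process (for which $\sigma^2<\infty$ suffices), and then letting $A\to\infty$ after the weak limit so that limit and integral may be exchanged.
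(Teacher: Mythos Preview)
The paper does not give its own proof of this proposition; it is merely quoted from Vervaat (1979, Theorem~6.1) to point out that Corollary~\ref{cormain2} is a special case of a known result. There is therefore no in-paper argument to compare against.

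Your outline---bivariate Donsker, Anscombe time-change to obtain a functional CLT for the perturbed renewal process $V(t)$, and then a continuous-mapping step for the Laplace functional---is a standard and sound route to discounted CLTs of this type. Two remarks:

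\begin{itemize}
\item As printed, the statement carries several typos (the limit should be $\alpha\to 0+$, the centering $\alpha^{-1}{\tt m}\mu^{-1}$, the prefactor $\alpha^{1/2}$, and the last summand in $v^2$ should be $2^{-1}{\tt s}^2\mu^{-1}$ rather than $2^{-1}\sigma^2\mu^{-1}$; check this against Corollary~\ref{cormain2} with ${\tt m}=0$). You have tacitly worked with the corrected version, which is the right thing to do, but it should be stated explicitly.
\item Your tail estimate ${\rm Var}\bigl(\sum_{k:S_{k-1}>A/\alpha}e^{-\alpha S_{k-1}}\eta_k\bigr)\leq C\alpha^{-1}e^{-cA}$ is correct for the \emph{martingale part} $\sum e^{-\alpha S_{k-1}}(\eta_k-{\tt m})$, by orthogonality and the elementary renewal theorem (you are right that $\sigma^2<\infty$---indeed just $\mu\in(0,\infty)$---suffices for this piece). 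But the \emph{drift part} ${\tt m}\sum_{k:S_{k-1}>A/\alpha}e^{-\alpha S_{k-1}}$ is itself of order $\alpha^{-1}$ and is not controlled by a variance bound; you must show that its centred, $\alpha^{1/2}$-scaled version is uniformly (in small $\alpha$) small for large $A$. That requires the functional CLT for the renewal counting process together with a Billingsley-type uniform tightness argument, and is the one point where the sketch genuinely needs to be filled in.
\end{itemize}

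A minor point: the identity $P(\alpha)=\int_{[0,\infty)}e^{-\alpha t}\,{\rm d}V(t)$ silently drops the a.s.\ finitely many terms with $S_{k-1}<0$; their contribution is a.s.\ finite and hence negligible after multiplication by $\alpha^{1/2}$, but this should be said.
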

We stress that our idea of proof of Theorem \ref{main2} is different from Vervaat's. Also, we note that in Theorem 2 of \cite{DallAglio:1964} the method of moments is employed for proving a (one-dimensional) central limit theorem for $\sum_{k\geq 0}b^{S_k}$ as $b\to 1-$ under the assumptions $\xi\geq 0$ a.s.\ and $\me \xi^p<\infty$ for all $p>0$.

\section{Proof of Theorem \ref{main1}}

We shall use a fragment of Theorem 5 on p.~49 in \cite{Hardy:1949} that we give in a form adapted to our setting.
\begin{lemma}\label{har}
Let $(c_k(b))_{k\in\mn}$ and $(s_k)_{k\in\mn}$ be sequences of real-valued functions defined on $(0,1)$ and real numbers, respectively. Assume that

\noindent (i) $\sum_{k\geq 1}|c_k(b)|<\infty$ for all $b\in (0,1)$ and that, for some $b_0\in (0,1)$ and some $A>0$ which does not depend on $b$, $\sum_{k\geq 1}|c_k(b)|\leq A$ for all $b\in (b_0,1)$;

\noindent (ii) $\lim_{b\to 1-} c_k(b)=0$ for all $k\in\mn$;

\noindent (iii) $\lim_{b\to 1-}\sum_{k\geq 1}c_k(b)=1$.

Then $t(b):=\sum_{k\geq 1}c_k(b)s_k$ converges for all $b\in (0,1)$. Furthermore, if $\lim_{n\to\infty}s_n=s\in\mr$, then $\lim_{b\to 1-}t(b)=s$.

\end{lemma}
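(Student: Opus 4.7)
The statement is a classical Toeplitz-type Tauberian theorem, and I plan to establish it by the standard head-plus-tail truncation argument. First I observe that the hypothesis $s_n\to s$ forces $(s_k)$ to be a bounded sequence, say $|s_k|\leq M$ for all $k$. Combined with (i) this yields $\sum_{k\geq 1}|c_k(b)s_k|\leq M\sum_{k\geq 1}|c_k(b)|<\infty$ for every $b\in(0,1)$, which settles the preliminary claim that $t(b)$ is well-defined as an absolutely convergent series.

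For the limit assertion $t(b)\to s$, my plan is to start from the decomposition
\[
t(b)-s \;=\; \sum_{k\geq 1}c_k(b)(s_k-s)\;+\;s\Big(\sum_{k\geq 1}c_k(b)-1\Big)
\]
and handle the two summands separately. Hypothesis (iii) disposes of the second summand immediately. For the first, given $\varepsilon>0$ I will choose $N=N(\varepsilon)$ so that $|s_k-s|<\varepsilon/A$ for all $k>N$ and split the sum at $k=N$. The tail will be bounded, uniformly in $b\in(b_0,1)$, by
\[
\Big|\sum_{k>N}c_k(b)(s_k-s)\Big| \;\leq\; \frac{\varepsilon}{A}\sum_{k\geq 1}|c_k(b)| \;\leq\; \varepsilon,
\]
thanks to the $b$-uniform bound in (i). The remaining head $\sum_{k=1}^{N}c_k(b)(s_k-s)$ is a \emph{finite} sum of at most $N$ terms, so (ii) drives each $c_k(b)\to 0$ and hence the entire head to $0$ as $b\to 1-$. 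Taking $\limsup_{b\to 1-}$ of $|t(b)-s|$ therefore produces the bound $\varepsilon$, and letting $\varepsilon\to 0$ completes the proof.

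The argument is essentially routine bookkeeping; I do not anticipate a genuine obstacle. The only point worth watching is that hypothesis (i) plays two distinct roles — ensuring pointwise absolute convergence of $t(b)$ and providing a $b$-uniform tail estimate — so it is essential that the bound $A$ in (i) be independent of $b\in(b_0,1)$, and the choice of $N$ has to be made \emph{before} sending $b\to 1-$ so that the finite head truly involves only finitely many terms when the limit in (ii) is invoked.
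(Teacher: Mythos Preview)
Your argument is correct and is exactly the classical Silverman--Toeplitz head-plus-tail proof. The paper does not supply its own proof of this lemma; it merely quotes the result from Hardy's \textit{Divergent Series} (Theorem~5, p.~49), where the same splitting is used, so there is nothing further to compare.
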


\begin{proof}[Proof of Theorem \ref{main1}]
We first prove that
\begin{equation}\label{aux50}
\lim_{b\to 1-}(1-b)\sum_{k\geq 0}b^{S_k}=\mu^{-1}\quad\text{a.s.}
\end{equation}
For $x\in\mr$, put $M(x)=\#\{n\geq 0: S_n\leq x\}$. Since $\lim_{n\to\infty}S_n=+\infty$ a.s., we have $M(x)<\infty$ a.s. Furthermore, by Theorem B in \cite{Lai:1975}, $\lim_{x\to\infty}x^{-1}M(x)=\mu^{-1}$ a.s. Hence, given $\varepsilon>0$ there exists an a.s.\ finite $x_0>0$ such that $|x^{-1}M(x)-\mu^{-1}|\leq \varepsilon$ whenever $x\geq x_0$. Write
$$\sum_{k\geq 0}b^{S_k}=\sum_{k\geq 0}b^{S_k}\1_{\{S_k\leq x_0\}}+\int_{(x_0,\,\infty)} b^x{\rm d}M(x).$$ The number of summands in the sum on the right-hand side is a.s.\ finite, for it is equal to $M(x_0)$, whence $\lim_{b\to 1-}\sum_{k\geq 0}b^{S_k}\1_{\{S_k\leq x_0\}}=M(x_0)$ a.s. Integration by parts yields $$\int_{(x_0,\,\infty)} b^x{\rm d}M(x)+b^{x_0}M(x_0)=|\log b| \int_{x_0}^\infty b^x M(x){\rm d}x\leq (\mu^{-1}+\varepsilon)b^{x_0}(1+|\log b|x_0)/|\log b|.$$ Thus, $${\lim\sup}_{b\to 1-}(1-b)\sum_{k\geq 0}b^{S_k}\leq \mu^{-1}\quad\text{a.s.}$$ The proof of the converse inequality for the limit inferior is completely analogous.

Passing to the proof of \eqref{aux51} we use summation by parts to obtain, for $b\in (0,1)$ and $\ell\in\mn$,
\begin{equation}\label{summat}
\sum_{k=1}^\ell b^{S_{k-1}}\eta_k=\sum_{k=1}^{\ell-1}(b^{S_{k-1}}-b^{S_k})T_k +b^{S_{\ell-1}}T_\ell,
\end{equation}
where $T_0:=0$ and $T_k:=\eta_1+\ldots+\eta_k$ for $k\in\mn$. We have $\lim_{\ell\to\infty}b^{S_{\ell-1}}T_\ell=0$ a.s.\ because by the strong law of large numbers the first factor decreases to zero exponentially fast, whereas the second factor exhibits at most  linear growth. Hence, $$\sum_{k\geq 1}b^{S_{k-1}}\eta_k=\sum_{k\geq 1}k(b^{S_{k-1}}-b^{S_k})(k^{-1}T_k).$$ We are going to apply Lemma \ref{har} with $c_k(b):=\mu(1-b)k(b^{S_{k-1}}-b^{S_k})$ for $k\in\mn$ and $b\in(0,1)$ and $s_k:=k^{-1}T_k$ for $k\in\mn$. While (ii) of Lemma \ref{har} holds trivially (a.s.), (iii) is a consequence of $\sum_{k\geq 1}c_k(b)=\mu (1-b)\sum_{k\geq 0}b^{S_k}$ and \eqref{aux50}. Let us prove (i). By another appeal to the strong law of large numbers, given $\varepsilon\in (0,\mu)$, there exists a random integer $N$ such that $b^{S_{k-1}}\leq b^{(\mu-\varepsilon)(k-1)}$ whenever $k\geq N+1$. Fix any $b_1\in (0,1)$. By the mean value theorem for differentiable functions, for $k\geq N+1$ and $b\in (b_1, 1)$,
\begin{equation}\label{aux53}
|b^{S_{k-1}}-b^{S_k}|\leq \max (b^{S_{k-1}}, b^{S_k})|\log b||\xi_k|\leq b_1^{-(\mu-\varepsilon)}b^{(\mu-\varepsilon)k}|\log b||\xi_k|.
\end{equation}
Using the inequality $xe^{-x}\leq 2e^{-x/2}$ for $x\geq 0$ we infer, for $k\geq N+1$ and $b\in (b_1, 1)$,
$$k|b^{S_{k-1}}-b^{S_k}|\leq 2(\mu-\varepsilon)^{-1}b_1^{-(\mu-\varepsilon)}b^{(\mu-\varepsilon)k/2}|\xi_k|=:cb^{(\mu-\varepsilon)k/2}|\xi_k|.$$ With this at hand, for $b\in (b_1,1)$,
\begin{multline*}
(\mu(1-b))^{-1}\sum_{k\geq 1}|c_k(b)|=\sum_{k\geq 1}k|b^{S_{k-1}}-b^{S_k}|\leq 2\sum_{k=1}^N k+\sum_{k\geq N+1}k|b^{S_{k-1}}-b^{S_k}|\leq N(N+1)\\+c\sum_{k\geq 1}b^{(\mu-\varepsilon)k/2}|\xi_k|. \end{multline*}
In view of \eqref{aux152}, $\lim_{b\to 1-}(1-b)\sum_{k\geq 1}b^{(\mu-\varepsilon)k/2}|\xi_k|=2|\me \xi|/(\mu-\varepsilon)$ a.s. This justifies (i) in the present setting.

By the strong law of large numbers $\lim_{k\to\infty}s_k=\lim_{k\to\infty}(k^{-1}T_k)={\tt m}$ a.s. Invoking Lemma \ref{har} we arrive at \eqref{aux51}.
The proof of Theorem \ref{main1} is complete.
\end{proof}

Later on, we shall need the following result. Its proof is omitted, for it is analogous to the proof of Theorem \ref{main1}.
\begin{lemma}\label{stronglaw}
Assume that $\me |\eta|<\infty$. Let $(x_n)_{n\in\mn}$ and $(y_n)_{n\in\mn}$ be sequences of numbers in $(0,1)$ approaching $1$ as $n\to\infty$. Let $\lambda>0$ and $M: (0,1)\to \mn$ be a function satisfying $\lim_{n\to\infty} M(x_n)(1-y_n^\lambda)=a\in [0,\infty]$. If $a=\infty$, then
$$\lim_{n\to\infty}\frac{1-y_n^\lambda}{y_n^{\lambda M(x_n)}} \sum_{k\geq M(x_n)+1}y_n^{\lambda k}\eta_k %{y_n^{\lambda M(x_n)}M(x_n)}
={\tt m}\quad\text{{\rm a.s.}},$$ where ${\tt m}=\me \eta$; if $a\in [0,\infty)$, then
$$\lim_{n\to\infty}%\frac{1-y_n^\lambda}{y_n^{\lambda M(x_n)}}
(1-y_n^\lambda)\sum_{k\geq M(x_n)+1}y_n^{\lambda k}\eta_k={\tt m}e^{-a}\quad\text{{\rm a.s.}}$$ % if $a\in (0,\infty)$, then $$\lim_{n\to \infty}\frac{1-y_n^\lambda}{(a+1)y_n^{\lambda M(x_n)}}\sum_{k\geq M(x_n)+1}y_n^{\lambda k}\eta_k={\tt m}\quad\text{{\rm a.s.}}$$

Clearly, these limit relations also hold if we put formally $x_n=y_n=b$ and let $b\to 1-$, that is, if one passes to the limit continuously.
\end{lemma}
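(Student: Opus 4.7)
Following Theorem~\ref{main1}, set $b := y_n^\lambda$ and $N := M(x_n)$, so $b \to 1-$ and $N(1-b) \to a$. Summation by parts with $T_k := \eta_1 + \ldots + \eta_k$ (the boundary term $b^L T_L \to 0$ as $L \to \infty$ by the SLLN against exponential decay) gives
\[
\sum_{k \geq N+1} b^k \eta_k = (1-b)\sum_{k \geq N+1} b^k T_k - b^{N+1} T_N.
\]
Writing $T_k = k\mathtt{m} + R_k$ with $R_k/k \to 0$ a.s.\ by the SLLN, direct computation of $\sum_{k \geq N+1} k b^k$ collapses the ``$k\mathtt{m}$'' part to $\mathtt{m}\,b^{N+1}/(1-b)$; after the substitution $R_{N+j} = R_N + Z_j^{(n)}$ with $Z_j^{(n)} := (T_{N+j}-T_N) - j\mathtt{m}$, the $R_N$ contributions telescope out, yielding
\[
\sum_{k \geq N+1} b^k \eta_k = \frac{\mathtt{m}\,b^{N+1}}{1-b} + (1-b)\,b^N \sum_{j \geq 1} b^j Z_j^{(n)}.
\]
After multiplying by the prescribed normaliser, the deterministic piece becomes $\mathtt{m}\,b^{N+1} \to \mathtt{m}\,e^{-a}$ (case $a < \infty$) or $\mathtt{m}\,b \to \mathtt{m}$ (case $a = \infty$), so the proof reduces to showing that the centred remainder is a.s.\ $o(1)$.

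For case $a \in [0,\infty)$ I would bound this remainder via the tail supremum $W_N := \sup_{k \geq N}|T_k/k - \mathtt{m}|$, which tends to zero a.s.\ by the SLLN; this yields $|Z_j^{(n)}| \leq W_N(2N+j)$ and hence an estimate of order $W_N\,b^{N+1}\,[2N(1-b)+1]$, which is $o(1)$ since the bracket is $\leq 2a+1+o(1)$ and $W_N \to 0$ whenever $N \to \infty$. The residual subcase $a = 0$ with $N$ bounded is handled by splitting $\sum_{k \geq N+1} = \sum_{k \geq 1} - \sum_{k=1}^N$ and invoking \eqref{aux152} together with $(1-b)\,O(1) \to 0$.

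The main obstacle is case $a = \infty$: here $N(1-b) \to \infty$ can outrun the decay of $W_N$, so the crude bound fails. A further summation by parts identifies the centred remainder with $(1-b)\,b \sum_{i \geq 0} b^i \zeta_{N+i+1}$, where $\zeta_k := \eta_k - \mathtt{m}$ is iid, centred, and integrable; for each fixed $N$ this tends to zero a.s.\ by \eqref{aux152} applied to the iid shift $(\zeta_{N+i+1})_{i \geq 0}$, and the exceptional set depends only on the SLLN behaviour of $(T_k)$ and is thus the same for every $N$. Lifting this pointwise-in-$N$ convergence to joint convergence along $(b_n, N_n)$ is the delicate step; I would work first along a countable discretisation of the parameter, control the relevant Abel sums uniformly in the shift via a Borel--Cantelli or second-moment-type estimate, and extend to the continuous parameter by monotonicity in $b$.
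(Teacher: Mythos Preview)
The paper omits this proof entirely, saying only that it is ``analogous to the proof of Theorem~\ref{main1}''. Your decomposition into the deterministic piece $\mathtt{m}\,b^{N+1}/(1-b)$ plus a centred remainder is correct, and your treatment of the case $a\in[0,\infty)$ is complete: the bound $|Z_j^{(n)}|\le W_N(2N+j)$ gives a remainder of size $W_N\,b^{N+1}(2N(1-b)+1)=o(1)$, and this is exactly the Silverman--Toeplitz argument behind Lemma~\ref{har} (with weights $c_k(n)=(1-b_n)^2 k\,b_n^k\,\1_{\{k>N_n\}}$ plus a boundary term at $k=N_n$, whose total variation stays bounded precisely because $N(1-b)\to a<\infty$). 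So for $a<\infty$ you are in line with what the paper intends.

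For $a=\infty$ you correctly locate the obstruction---the total variation of the Toeplitz weights is now $\asymp N(1-b)\to\infty$, so neither Lemma~\ref{har} nor your $W_N$ bound applies---but your proposed fix is not a proof. First, the ``further summation by parts'' simply undoes the previous one: it returns you to $(1-b)\sum_{i\ge0}b^i\zeta_{N+i+1}$, which is where you started after centring. Second, the tools you invoke for the diagonal limit are unavailable here: a second-moment (Borel--Cantelli) estimate needs $\me\eta^2<\infty$, which is not assumed; and there is no monotonicity in $b$ once the $\zeta_k$ are signed, since $b\mapsto(1-b)\sum_j b^j\zeta_{N+j}$ is not monotone. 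Under the sole hypothesis $\me|\eta|<\infty$ the rate at which $R_k/k\to0$ can be arbitrarily slow, so one genuinely cannot conclude $\epsilon_N\cdot N(1-b)\to0$, and the pointwise-in-$N$ convergence of the Abel sum does not lift to the diagonal $(b_n,N_n)$ by the means you suggest. The paper's one-line ``analogous'' claim is equally silent on this point, so you are not missing a written argument; but the gap in your sketch for the case $a=\infty$ is real.
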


\section{Proof of Theorem \ref{main2}}

We shall prove weak convergence of the finite-dimensional distributions and then tightness.

\subsection{Proof of the finite-dimensional distributions in \eqref{clt22}}

We shall use the Cram\'{e}r-Wold device. Namely, we intend to show that, for
any $\ell\in\mn$, any real $\alpha_1,\ldots, \alpha_\ell$ and any
$0<u_1<\ldots<u_\ell<\infty$, as $b\to 1-$,
\begin{equation}\label{fidi}
(1-b^2)^{1/2}\sum_{i=1}^\ell \alpha_i \sum_{k\geq 0}b^{u_iS_k}\eta_{k+1} ~{\overset{{\rm d}}\longrightarrow}~(2{\tt s}^2\mu^{-1})^{1/2}\sum_{i=1}^\ell\alpha_i \int_{[0,\,\infty)}e^{-u_i y}{\rm d}B(y).
\end{equation}

For $k\in\mn$, denote by $\mathcal{F}_k$ the $\sigma$-algebra generated by $(\xi_j, \eta_j)_{1\leq j\leq k}$. We shall write $\me_k(\cdot)$ for $\me(\cdot|\mathcal{F}_k)$. For each $b\in (0,1)$, the sequence $$\Big((1-b^2)^{1/2}\sum_{i=1}^\ell \alpha_i \sum_{k=0}^{n-1} b^{u_iS_k}\eta_{k+1}, \mathcal{F}_n\Big)_{n\in\mn}$$ forms a martingale (the martingale is not necessarily integrable, for the situation that $\me b^\xi=\infty$ is not excluded). By the martingale central limit theorem (Theorem 2.5(a) in \cite{Helland:1982}), \eqref{fidi} follows if we can show that
\begin{equation}\label{cova}
(1-b^2) \sum_{k\geq 0}\me_k \Big(\sum_{i=1}^\ell \alpha_i b^{u_iS_k}\eta_{k+1}\Big)^2 ~\overset{\mmp}{\to}~ %\sum_{i=1}^\ell \alpha_i^2 u_i^{-1}+2\sum_{1\leq i<j\leq \ell}\alpha_i\alpha_j 2(u_i+u_j)^{-1},\quad b\to 1-
2{\tt s}^2\mu^{-1}\me \Big(\sum_{i=1}^\ell \alpha_i\int_{[0,\,\infty)}e^{-u_iy}{\rm d}B(y)\Big)^2,\quad b\to 1-
\end{equation}
and, for all $\varepsilon>0$,
\begin{equation}\label{linde}
(1-b^2) \sum_{k\geq 0}\me_k \Big(\sum_{i=1}^\ell \alpha_i b^{u_iS_k}\eta_{k+1}\Big)^2\1_{\{(1-b^2)^{1/2}|\sum_{i=1}^\ell \alpha_i b^{u_iS_k}\eta_{k+1}|>\varepsilon\}}~\overset{\mmp}{\to}~0,\quad b\to 1-.
\end{equation}

We start by proving \eqref{cova}:
$$(1-b^2) \sum_{k\geq 0}\me_k \Big(\sum_{i=1}^\ell \alpha_i b^{u_iS_k}\eta_{k+1}\Big)^2={\tt s}^2 (1-b^2)\Big(\sum_{i=1}^\ell\alpha_i^2\sum_{k\geq 0} b^{2u_iS_k}+2\sum_{1\leq i<j\leq \ell}\alpha_i\alpha_j\sum_{k\geq 0} b^{(u_i+u_j)S_k}\Big).$$ By Theorem \ref{main1}, this converges a.s., as $b\to 1-$, to $${\tt s}^2 \mu^{-1}\Big(\sum_{i=1}^\ell \alpha_i^2 u_i^{-1}+4\sum_{1\leq i<j\leq \ell}\alpha_i\alpha_j (u_i+u_j)^{-1}\Big)=2{\tt s}^2 \mu^{-1}\me \Big(\sum_{i=1}^\ell \alpha_i\int_{[0,\,\infty)}e^{-u_iy}{\rm d}B(y)\Big)^2,$$ where the last equality follows from \eqref{limit}.

Passing to the proof of \eqref{linde} we first conclude that, in view of
\begin{multline*}
(a_1+\ldots+a_\ell)^2\1_{\{|a_1+\ldots++a_\ell|>y\}}\leq (|a_1|+\ldots+|a_\ell|)^2\1_{\{|a_1|+\ldots+|a_\ell|>y\}}\\\leq
\ell^2(|a_1|\vee\ldots\vee |a_\ell|)^2\1_{\{\ell(|a_1|\vee\ldots\vee |a_\ell|)>y\}}\leq \ell^2(a_1^2\1_{\{|a_1|>y/\ell\}}+\ldots+a_\ell^2\1_{\{|a_\ell|>y/\ell\}})
\end{multline*}
which holds for $a_1,\ldots, a_\ell\in \mr$ and $y>0$, it suffices to show that, for all $\varepsilon>0$ and $u>0$,
$$(1-b^2) \sum_{k\geq 0}\me_k (b^{uS_k}\eta_{k+1})^2\1_{\{(1-b^2)^{1/2}b^{uS_k}|\eta_{k+1}|>\varepsilon\}}~\overset{\mmp}{\to}~0,\quad b\to 1-.$$ Put $T:=\sup\{n\in\mn_0: S_n\leq 0\}$ and note that $T<\infty$ a.s.\ as a consequence of $\lim_{n\to\infty}S_n=+\infty$ a.s. We infer % First, we analyze
$$(1-b^2)\sum_{k=0}^T \me_k (b^{uS_k}\eta_{k+1})^2\1_{\{(1-b^2)^{1/2}b^{uS_k}|\eta_{k+1}|>\varepsilon\}}\leq {\tt s}^2 (1-b^2)\sum_{k=0}^T b^{2uS_k}~\to~0\quad\text{a.s.~as}~~b\to 1-.$$ To proceed, observe that, for $k\geq T+1$, we have $b^{uS_k}\leq 1$, whence $$\{(1-b^2)^{1/2}b^{uS_k}|\eta_{k+1}|>\varepsilon\}\subseteq\{|\eta_{k+1}|>\varepsilon (1-b^2)^{-1/2}\}.$$ This yields
\begin{multline*}
(1-b^2)\sum_{k\geq T+1} \me_k (b^{uS_k}\eta_{k+1})^2\1_{\{(1-b^2)^{1/2}b^{uS_k}|\eta_{k+1}|>\varepsilon\}}\\\leq \me \eta^2\1_{\{|\eta|>\varepsilon (1-b^2)^{-1/2}\}} (1-b^2)\sum_{k\geq 0} b^{2uS_k}~\to~0\quad\text{a.s.~as}~~b\to 1-.
\end{multline*}
The limit relation is justified by the fact that while the truncated second moment converges to $0$, $\lim_{b\to 1-}(1-b^2)\sum_{k\geq 0} b^{2uS_k}=(\mu u)^{-1}$ a.s.\ by Theorem \ref{main1}.

For the proof of Proposition \ref{lilhalf2} we need the following one-dimensional central limit theorem.
\begin{lemma}\label{lem:aux}
Let $M: (0,1)\to \mn$ satisfy $\lim_{b\to 1-}M(b)=\infty$. Under the assumptions of Theorem \ref{main2}, as $b\to 1-$,
$$\Big(\sum_{k=0}^{M(b)}b^{2\mu k}\Big)^{-1/2} \sum_{k=0}^{M(b)} b^{S_k}\eta_{k+1}~{\overset{{\rm d}}\longrightarrow}~({\tt s}^2 \mu^{-1})^{1/2}\, {\rm Normal}(0,1).%\quad b\to 1-,
$$ % where ${\rm Normal}(0,1)$ denotes a random variable with the standard normal distribution.
\end{lemma}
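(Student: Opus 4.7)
The plan is to deduce the lemma from the martingale central limit theorem, in the spirit of the proof of Theorem~\ref{main2}. For each $b\in(0,1)$ the partial sums $\bigl(\sum_{k=0}^{n-1}b^{S_k}\eta_{k+1}\bigr)_{1\le n\le M(b)+1}$ form a zero-mean martingale with respect to $(\mathcal{F}_n)$, and dividing by the normalizing constant $a(b):=\bigl(\sum_{k=0}^{M(b)}b^{2\mu k}\bigr)^{1/2}$ puts us in a position to invoke Helland's Theorem~2.5(a). Two conditions then have to be checked.

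First, the conditional variance condition. Since $\eta_{k+1}$ is independent of $\mathcal{F}_k$, has mean zero and variance ${\tt s}^2$, the sum of conditional second moments simplifies to
\[
a(b)^{-2}\sum_{k=0}^{M(b)}\me_k\bigl(b^{S_k}\eta_{k+1}\bigr)^2={\tt s}^2\,\frac{\sum_{k=0}^{M(b)}b^{2S_k}}{\sum_{k=0}^{M(b)}b^{2\mu k}},
\]
and one has to show this converges in probability to ${\tt s}^2\mu^{-1}$. I would split according to $\lim_{b\to 1-}M(b)(1-b^{2\mu})\in[0,\infty]$. When this limit equals $+\infty$, both numerator and denominator are asymptotic to their complete infinite series, so Theorem~\ref{main1} (with $\eta_k\equiv 1$ and $b^{2}$ in place of $b$) combined with the elementary identity $\sum_{k\ge 0}b^{2\mu k}=1/(1-b^{2\mu})\sim 1/(2\mu(1-b))$ identifies the limit. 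In the bounded regime the two head sums are controlled by the same Abel-summation argument that underlies the proof of Theorem~\ref{main1}, using Lai's renewal law of large numbers $x^{-1}\#\{n\ge 0:S_n\le x\}\to\mu^{-1}$ a.s.\ and stopping the integration at the truncation level.

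Second, the conditional Lindeberg condition, which can be copied almost verbatim from the proof of Theorem~\ref{main2}. Set $T:=\sup\{n\ge 0:S_n\le 0\}$, which is a.s.\ finite because $S_n\to+\infty$ a.s. The indices $k\le T$ contribute a.s.\ finitely many terms whose aggregate is $O(a(b)^{-2})\to 0$, since $M(b)\to\infty$ forces $a(b)\to\infty$. For $k>T$ we have $b^{S_k}\le 1$, hence the Lindeberg indicator forces $|\eta_{k+1}|>\varepsilon a(b)$, and the corresponding sum is dominated by $\me\bigl[\eta^{2}\1_{\{|\eta|>\varepsilon a(b)\}}\bigr]$ times the conditional variance bound from the first step; the first factor vanishes by dominated convergence.

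The main technical obstacle is the conditional variance step in the intermediate regime $M(b)(1-b^{2\mu})\to c\in(0,\infty)$, where neither the infinite-series asymptotics of Theorem~\ref{main1} nor the tail version of Lemma~\ref{stronglaw} applies directly; the needed head-sum analogue has to be established separately, following the blueprint of Theorem~\ref{main1} but truncating the renewal integral at a level of order $c/(2\mu(1-b))$.
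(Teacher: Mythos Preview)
Your approach is the paper's approach: apply the martingale central limit theorem and verify the conditional-variance and Lindeberg conditions. The Lindeberg argument you sketch is essentially the one in Section~4.1, and it goes through unchanged.

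The difference is in the conditional-variance step. The paper does not split into regimes according to the size of $M(b)(1-b^{2\mu})$; it simply observes, via the strong law of large numbers for $(S_k)$, that
\[
\sum_{k=0}^{M(b)}b^{2S_k}\ \sim\ \sum_{k=0}^{M(b)}b^{2\mu k}\qquad\text{a.s.\ as }b\to 1-,
\]
uniformly over all admissible growth rates of $M(b)$. The point is that once $N_0$ is chosen so that $(\mu-\varepsilon)k\le S_k\le(\mu+\varepsilon)k$ for $k\ge N_0$, the ratio is squeezed between the ratios of the partial geometric sums with exponents $2(\mu\pm\varepsilon)k$ and $2\mu k$, and for these an elementary concavity bound on $t\mapsto 1-x^{t}$ gives
\[
\frac{\sum_{k=0}^{M}b^{2\alpha k}}{\sum_{k=0}^{M}b^{2\mu k}}\in\Big[\min\Big(\tfrac{\alpha}{\mu},\tfrac{\mu}{\alpha}\Big),\ \max\Big(\tfrac{\alpha}{\mu},\tfrac{\mu}{\alpha}\Big)\Big]
\]
for all $b\in(0,1)$ and all $M$. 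Thus the ``intermediate regime'' you flag as the main obstacle is not a genuine obstacle; no separate head-sum analogue of Theorem~\ref{main1} or Lemma~\ref{stronglaw} is needed.

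Note also that your stated target for the conditional variance is inconsistent with your own argument. Your computation in the $M(b)(1-b^{2\mu})\to\infty$ regime gives numerator $\sim\mu^{-1}(1-b^2)^{-1}$ and denominator $\sim\mu^{-1}(1-b^2)^{-1}$, so the ratio tends to $1$, not to $\mu^{-1}$; the same holds in every regime by the uniform bound above. Hence the limiting variance produced by the martingale CLT is ${\tt s}^2$, not ${\tt s}^2\mu^{-1}$. The extra factor $\mu^{-1}$ in the displayed statement of the lemma appears to be a slip in the paper; it is harmless there, since the lemma is only invoked in Section~5 under the standing normalization $\mu={\tt s}^2=1$.
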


After noting that $\sum_{k=0}^{M(b)}b^{2S_k}\sim \sum_{k=0}^{M(b)}b^{2\mu k}$ a.s.\ as $b\to 1-$ by the strong law of large numbers for random walks, a simplified version of the proof given above applies. We omit details.

\subsection{Proof of tightness in \eqref{clt22}}

Fix any $c,d\in (0,\infty)$, $c<d$. We have to prove tightness on $[c,d]$.

For each $\delta\in (0,\mu)$ and $k\in\mn_0$, define the event $\mathcal{R}_k(\delta):=\{|S_k-\mu k|>\delta k\}$. We first check that
\begin{equation}\label{aux67}
\lim_{b\to 1-}(1-b^2)^{1/2}\sup_{u\in [c,\,d]}\, \Big|\sum_{k\geq 0}b^{uS_k}\1_{\mathcal{R}_k(\delta)}\eta_{k+1}\Big|=0\quad\text{a.s.}
\end{equation}
Indeed, the supremum does not exceed a.s. $$\sum_{k\geq 0}b^{cS_k}\1_{\{S_k\geq 0\}}\1_{\mathcal{R}_k(\delta)}|\eta_{k+1}|+\sum_{k\geq 0}b^{dS_k}\1_{\{S_k<0\}}\1_{\mathcal{R}_k(\delta)}|\eta_{k+1}|.$$ Here, each summand converges a.s.\ as $b\to 1-$ to an a.s.\ finite random variable. Furthermore, the number of nonzero summands is a.s.\ finite in view of $\sum_{k\geq 0}\1_{\mathcal{R}_k(\delta)}<\infty$ a.s.\ which is a consequence of the strong law of large numbers. Thus, \eqref{aux67} has been proved.

Next, we intend to show that, for any $u,v\in [c,d]$ and $b<1$ close to $1$,
\begin{equation}\label{aux68}
(1-b^2)\me \Big(\sum_{k\geq 0}(b^{uS_k}-b^{vS_k})\1_{\mathcal{R}^c_k(\delta)}\eta_{k+1}\Big)^2\leq A(u-v)^2
\end{equation}
for a constant $A$ which does not depend on $u$ and $v$. Here, $\mathcal{R}^c_k(\delta)$ denotes the complement of $\mathcal{R}_k(\delta)$, that is, $\mathcal{R}^c_k(\delta)=\{|S_k-\mu k|\leq \delta k\}$. To this end, we observe that $\mathcal{R}^c_k(\delta)\subseteq \{S_k>0\}$ and then invoking the mean value theorem for differentiable functions we obtain a.s.\ on $\mathcal{R}^c_k(\delta)$
\begin{multline*}
|b^{u S_k}-b^{v S_k}|\leq b^{cS_k}|\log b||u-v| S_k \leq  (\mu+\delta) b^{c(\mu-\delta)k}|\log b||u-v|k\\ \leq 2(\mu+\delta)(ce(\mu-\delta))^{-1} b^{(c/2)(\mu-\delta)k}|u-v|.
\end{multline*}
We have used the inequality $$\sup_{x>0}|\log b|xb^x\leq 1/e$$ for the last step. It remains to note that
\begin{multline*}
\me \Big(\sum_{k\geq 0}(b^{uS_k}-b^{vS_k})\1_{\mathcal{R}^c_k(\delta)}\eta_{k+1}\Big)^2={\tt s}^2\me \sum_{k\geq 0}(b^{uS_k}-b^{vS_k})^2 \1_{\mathcal{R}^c_k(\delta)}\\ \leq
4{\tt s}^2(\mu+\delta)^2(ce(\mu-\delta))^{-2} \sum_{k\geq 0}b^{c(\mu-\delta)k} (u-v)^2
\end{multline*}
and that %, by Theorem \ref{main1} with $\xi=c(\mu-\delta)$ and $B=1$,
$$\lim_{b\to 1-}(1-b^2)\sum_{k\geq 0}b^{c(\mu-\delta)k}=2(c(\mu-\delta))^{-1}.$$ Thus, \eqref{aux68} holds with $A=16 {\tt s}^2(\mu+\delta)^2 e^{-2}c^{-3}(\mu-\delta)^{-3}$. By formula (12.51) on p.~95 in \cite{Billingsley:1968}, the distributions of $$\Big((1-b^2)^{1/2}\sum_{k\geq 0}b^{uS_k}\1_{\mathcal{R}^c_k(\delta)}\eta_{k+1}\Big)_{u\in [c,\,d]}$$ are tight. The proof of Theorem \ref{main2} is complete.

\section{Proof of Theorem \ref{main}}

Our argument follows closely the paths of (slightly different) proofs of Theorem 1.1 in \cite{Bovier+Picco:1993} and Theorem 1.1 in \cite{Picco+Vares:1994}. In the cited references $S_n=n$, $n\in\mn_0$, that is, the random walk $(S_n)_{n\in\mn_0}$ is deterministic. Of course, we know that in our setting, for large $n$, $S_n$ is approximately $\mu n$ by the strong law of large numbers. Thus, an additional effort is needed to justify the replacement of $S_n$ with $\mu n$.

We start by proving an intermediate result.
\begin{assertion}\label{lilhalf}
Under the assumptions of Theorem \ref{main},
\begin{equation}\label{lil1}
{\lim\sup}_{b\to 1-}\Big(\frac{1-b^2}{\log\log \frac{1}{1-b^2}}\Big)^{1/2}\sum_{k\geq 0}b^{S_k}\eta_{k+1}\leq (2{\tt s}^2 \mu^{-1})^{1/2}\quad\text{{\rm a.s.}}
\end{equation}
and
\begin{equation}\label{lil2}
{\lim\inf}_{b\to 1-}\Big(\frac{1-b^2}{\log\log \frac{1}{1-b^2}}\Big)^{1/2}\sum_{k\geq 0}b^{S_k}\eta_{k+1}\geq -(2{\tt s}^2 \mu^{-1})^{1/2}\quad\text{{\rm a.s.}}
\end{equation}
\end{assertion}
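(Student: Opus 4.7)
Since $\eta\mapsto-\eta$ preserves all hypotheses and swaps \eqref{lil1} with \eqref{lil2}, it suffices to prove \eqref{lil1}. I plan to follow the classical LIL strategy of Bovier--Picco: obtain a conditional subgaussian tail bound for the perpetuity at a fixed $b$, apply Borel--Cantelli along a geometric subsequence $b_n\to 1-$, and interpolate to the continuous limit superior. The novelty compared to the random power series $S_n=n$ in \cite{Bovier+Picco:1993} is that the exponents $S_k$ are themselves random, which forces conditional arguments and a nontrivial interpolation.

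For the tail bound, condition on $\mathcal{G}:=\sigma(\xi_1,\xi_2,\ldots)$. Given $\mathcal{G}$ the terms $b^{S_k}\eta_{k+1}$ are independent and centered with total conditional variance $V(b):={\tt s}^2\sum_{k\geq 0}b^{2S_k}$, and Theorem \ref{main1} applied with $b$ replaced by $b^2$ and $\eta_k\equiv 1$ gives $(1-b^2)V(b)\to{\tt s}^2\mu^{-1}$ a.s. Truncate $\eta_{k+1}=\eta_{k+1}'+\eta_{k+1}''$ at level $L(b):=\varepsilon(1-b^2)^{-1/2}(\log\log(1/(1-b^2)))^{-1/2}$: $\me\eta^2<\infty$ combined with a Chebyshev estimate ensures that the remainder contributes negligibly to the normalised perpetuity, while a conditional Bernstein bound yields, on the a.s.\ event $\{(1-b^2)V(b)\leq(1+\varepsilon){\tt s}^2\mu^{-1}\}$,
\begin{equation*}
\mmp\bigg(\sum_{k\geq 0}b^{S_k}\eta_{k+1}'>\lambda\sqrt{\tfrac{\log\log(1/(1-b^2))}{1-b^2}}\,\bigg|\,\mathcal{G}\bigg)\leq \exp\bigg(-\frac{\mu\lambda^2\log\log(1/(1-b^2))}{2(1+\varepsilon){\tt s}^2}\bigg).
\end{equation*}
With $b_n$ defined by $1-b_n^2=\rho^n$ for $\rho\in(0,1)$, the right-hand side is of order $n^{-\mu\lambda^2/(2(1+\varepsilon){\tt s}^2)}$, which is summable whenever $\lambda^2>2(1+\varepsilon){\tt s}^2\mu^{-1}$; Borel--Cantelli then delivers the subsequential analogue of \eqref{lil1}.

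For the interpolation, write $b=e^{-\beta}$ and $\Phi(\beta):=\sum_k e^{-\beta S_k}\eta_{k+1}$; then $\Phi$ is differentiable with derivative $\Phi'(\beta)=-\sum_k S_k e^{-\beta S_k}\eta_{k+1}$, whose conditional variance ${\tt s}^2\sum_k S_k^2 b^{2S_k}$ is of order $(1-b^2)^{-3}$ a.s.\ (by an argument analogous to Theorem \ref{main1} using $S_k^2\sim\mu^2 k^2$). A mean value estimate on $[\beta_{n+1},\beta_n]$ together with a Chernoff bound for $\Phi'(\beta)$ bounds the oscillation of the normalised perpetuity on $[b_n,b_{n+1}]$ by a quantity that tends to $0$ as $\rho\uparrow 1$. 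Letting $\rho\uparrow 1$ along rationals and then $\lambda\downarrow(2{\tt s}^2\mu^{-1})^{1/2}$ yields \eqref{lil1}.

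The principal obstacle is the interpolation step. In \cite{Bovier+Picco:1993} the weights $b^n$ are monotone in $b$, so the perpetuity is sandwiched between its values at $b_n$ and $b_{n+1}$; no such monotonicity is available here because $b\mapsto b^{S_k}$ is increasing on $\{S_k<0\}$ and decreasing on $\{S_k>0\}$. A derivative estimate for $\Phi$ is therefore unavoidable, which in turn requires both a strong law for the second-order series $(1-b^2)^3\sum_k S_k^2 b^{2S_k}$ and a Bernstein bound for $\Phi'(\beta)$ whose verification runs parallel to the preceding two steps.
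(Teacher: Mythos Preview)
Your plan has a genuine gap at the first step. Conditioning on $\mathcal{G}=\sigma(\xi_1,\xi_2,\ldots)$ does \emph{not} leave the summands $b^{S_k}\eta_{k+1}$ centered: the hypotheses allow $\xi$ and $\eta$ to be arbitrarily dependent, so $\me[\eta_{k+1}\mid\mathcal{G}]=\me[\eta\mid\xi=\xi_{k+1}]$ need not vanish even though $\me\eta=0$, and the conditional second moment $\me[\eta_{k+1}^2\mid\mathcal{G}]$ need not equal ${\tt s}^2$. Hence neither your conditional Bernstein inequality nor the formula $(1-b^2)V(b)\to{\tt s}^2\mu^{-1}$ for the \emph{conditional} variance is available. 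The structure that does survive is the martingale one relative to $\mathcal{F}_k=\sigma((\xi_j,\eta_j)_{j\leq k})$: because $\eta_{k+1}$ is independent of $\mathcal{F}_k$, one has $\me_k(b^{S_k}\eta_{k+1})=0$ and $\me_k(b^{S_k}\eta_{k+1})^2={\tt s}^2 b^{2S_k}$. The paper's argument is built on exactly this: after cutting off a tail $k\geq N_{2,\delta}(b)$ and a middle range (Lemmas~\ref{1} and~\ref{2}) and truncating $\eta_k$ at a level involving $b^{-S_{k-1}}$ (Lemma~\ref{3}), one constructs an exponential $(\mathcal{F}_k)$-supermartingale and obtains the tail bound by Markov's inequality on it (Lemma~\ref{4}).

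The interpolation you flag as the principal obstacle is also handled quite differently. Your derivative scheme requires $\sup_{\beta\in[\beta_{n+1},\beta_n]}|\Phi'(\beta)|$, and a pointwise Chernoff bound does not deliver this; $\beta\mapsto S_k e^{-\beta S_k}$ is not monotone and the $\eta_{k+1}$ change sign, so neither monotonicity nor a single-point estimate suffices without a further chaining or higher-derivative argument that you do not supply. The paper avoids this by working along a much denser sequence $(\tb_n)$ satisfying $(\tb_{n+1}-\tb_n)(1-\tb_n)^{-1}(\log\log\frac{1}{1-\tb_n})^{3/2}\to 0$ (far finer than geometric spacing), and by interpolating via summation by parts (Lemma~\ref{5}): this converts the oscillation into control of $\sup_{k\leq N}|T_k|$ with $T_k=\eta_1+\cdots+\eta_k$, for which the classical LIL is directly applicable, combined with elementary pointwise bounds on $|b^{S_{k-1}}-b^{S_k}|$.
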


We can and do assume that $\mu={\tt s}^2=1$. To see this, replace $b^{S_{k-1}}\eta_k$ with $b^{S_{k-1}/\mu}\eta_k/{\tt s}$ and note that $1-b^{2\mu}\sim \mu (1-b^2)$ as $b\to 1-$. Pick any $\delta\in (0,1)$. For $b\in (0,1)$ and such a $\delta$, put $$N_{2,\,\delta}(b):=\Big\lfloor \frac{1}{1-b^{2\delta}}\log\frac{1}{1-b^{2\delta}}\Big\rfloor$$ and, for $b\in [(1-e^{-1})^{1/2}, 1)$, put $$f(b):=\Big(2\frac{1}{1-b^2}\log\log\frac{1}{1-b^2}\Big)^{-1/2}.$$ We prove Proposition \ref{lilhalf} via a sequence of lemmas.
\begin{lemma}\label{1}
%\noindent {\sc Claim 1}.
$\lim_{b\to 1-} f(b)\sum_{k\geq N_{2,\,\delta}(b)}b^{S_{k-1}}\eta_k=0$ {\rm a.s.}
\end{lemma}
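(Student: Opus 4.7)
The plan is to Abel-summate the tail series so as to reduce the problem to controlling two explicit pieces, using three ingredients: the SLLN for $(S_k)$, the Hartman-Wintner LIL for $T_k:=\eta_1+\ldots+\eta_k$, and Lemma \ref{stronglaw} applied to $|\xi_k|$ (which is integrable since $\mu=\me\xi\in(0,\infty)$ forces $\me\xi^\pm<\infty$). Writing $N:=N_{2,\delta}(b)$ and arguing exactly as in the proof of Theorem \ref{main1}, Abel summation gives
\begin{equation*}
\sum_{k\geq N}b^{S_{k-1}}\eta_k \;=\; -\,b^{S_{N-1}}T_{N-1} + \sum_{k\geq N}(b^{S_{k-1}}-b^{S_k})T_k,
\end{equation*}
the boundary at infinity vanishing a.s.\ because $b^{S_{L-1}}$ decays geometrically while $|T_L|=O(\sqrt{L\log\log L})$. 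It then suffices to show that $f(b)$ times each of the two terms on the right tends a.s.\ to $0$.

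For the boundary piece, fix $\varepsilon\in(0,1)$. By SLLN, $b^{S_{N-1}}\leq b^{(1-\varepsilon)(N-1)}$ eventually a.s. Since $(1-b)N\sim(2\delta)^{-1}\log\frac{1}{1-b^2}$, this upper bound is of order $(1-b^2)^{(1-\varepsilon)/(2\delta)}$. Hartman-Wintner gives $|T_{N-1}|\leq C\sqrt{N\log\log N}$ eventually, which is of order $(1-b^2)^{-1/2}\bigl(\log\tfrac{1}{1-b^2}\cdot\log\log\tfrac{1}{1-b^2}\bigr)^{1/2}$. Combining with $f(b)\sim(1-b^2)^{1/2}(2\log\log\tfrac{1}{1-b^2})^{-1/2}$ yields
\begin{equation*}
f(b)\,|b^{S_{N-1}}T_{N-1}|\;\leq\;C'(1-b^2)^{(1-\varepsilon)/(2\delta)}\sqrt{\log\tfrac{1}{1-b^2}}\;\to\;0.
\end{equation*}

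For the sum of differences, I would combine the mean-value estimate \eqref{aux53} (giving $|b^{S_{k-1}}-b^{S_k}|\leq C b^{(1-\varepsilon)k}|\log b|\,|\xi_k|$ eventually a.s.) with the LIL bound $|T_k|\leq C'\sqrt{k\log\log k}$. The crucial absorption step uses $\sup_{x\geq 0}x^{1/2}e^{-x/2}=e^{-1/2}$ at $x=(1-\varepsilon)k|\log b|$, which moves the factor $(k|\log b|)^{1/2}$ into the exponential, shrinking its rate by half; a second absorption swallows the subpolynomial $\sqrt{\log\log k}$ at the cost of a small additional decrease of the rate to $(1-2\varepsilon)/2$ (for any $\varepsilon>0$). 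Each summand is thereby bounded by $C''|\log b|^{1/2}b^{(1-2\varepsilon)k/2}|\xi_k|$, and Lemma \ref{stronglaw} applied to $|\xi_k|$ with $\lambda=(1-2\varepsilon)/2$ and $M(b)=N$ lies in the $a=\infty$ regime (since $N(1-b^\lambda)\sim\tfrac{1-2\varepsilon}{4\delta}\log\tfrac{1}{1-b^2}\to\infty$), giving
\begin{equation*}
\sum_{k\geq N+1}b^{\lambda k}|\xi_k|\;\sim\;\me|\xi|\,\frac{b^{\lambda N}}{1-b^\lambda}\quad\text{a.s.},
\end{equation*}
of order $(1-b^2)^{(1-2\varepsilon)/(4\delta)-1}$. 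Multiplying by $f(b)|\log b|^{1/2}\sim (1-b^2)(2\log\log\tfrac{1}{1-b^2})^{-1/2}$ produces a bound of order $(1-b^2)^{(1-2\varepsilon)/(4\delta)}(\log\log\tfrac{1}{1-b^2})^{-1/2}$, which tends to $0$ for any $\delta\in(0,1)$ and any sufficiently small $\varepsilon>0$. The main technical obstacle is to keep the exponent of $(1-b^2)$ strictly positive throughout the chain of absorptions, which is precisely why one picks $\lambda$ strictly below the SLLN exponent $1-\varepsilon$; the net positive power of $(1-b^2)$ that survives is what makes $f(b)$ times the tail vanish.
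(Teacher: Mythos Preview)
Your proof is correct and follows a genuinely different route from the paper's. The paper establishes Lemma~\ref{1} by discretizing along a sequence such as $b_n=1-n^{-2}$, bounding $\sup_{b\in[b_n,b_{n+1}]}\big|\sum_{k\geq N_{2,\delta}(b)}b^{S_{k-1}}\eta_k\big|$ via the crude estimate $b^{S_{k-1}}\leq b^{\delta(k-1)}$, splitting the resulting sum into its mean and a centered part, and applying Chebyshev together with Borel--Cantelli and the summability of $(1-b_n)$. Your Abel-summation argument is precisely the device the paper uses for Lemma~\ref{2} and for the paragraph leading to~\eqref{aux121}; you have observed that the same technique handles the full tail $k\geq N_{2,\delta}(b)$ directly, without any discretization.

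One point deserves care. Your ``second absorption'' amounts to $\sqrt{\log\log k}\,b^{(1-\varepsilon)k/2}\leq C\,b^{(1-2\varepsilon)k/2}$, which is false uniformly in $b$ and $k$ (the supremum over $k\in\mn$ of $\sqrt{\log\log k}\,b^{\varepsilon k/2}$ diverges as $b\to 1-$). However, you only use it for $k\geq N_{2,\delta}(b)$; on that range $k\mapsto \sqrt{\log\log k}\,b^{\varepsilon k/2}$ is already decreasing (since $N_{2,\delta}(b)\gg |\log b|^{-1}$), and its value at $k=N_{2,\delta}(b)$ is of order $(1-b^2)^{\varepsilon/(4\delta)}\sqrt{\log\log(1/(1-b^2))}\to 0$, so the bound is legitimate with $C=1$ for $b$ close to $1$. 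It would help to make this restriction explicit.

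What your approach buys is a unified treatment with Lemma~\ref{2} and the complete avoidance of the auxiliary sequence $(b_n)$ and Borel--Cantelli. What the paper's approach buys is that it uses only $\me\eta^2<\infty$ through Chebyshev, not the Hartman--Wintner LIL for $(T_k)$; of course the LIL is needed elsewhere in the paper anyway, so this is no real economy.
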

\begin{proof}
Pick any increasing sequence $(b_n)_{n\in\mn}$ of positive numbers satisfying $\lim_{n\to\infty}b_n=1$,
\begin{equation}\label{aux0}
b_{n+1}-b_n~\sim~c_1(1-b_n)^{1+c_2},\quad n\to\infty
\end{equation}
for some $c_1, c_2>0$ and
\begin{equation}\label{aux1}
\sum_{n\geq n_0}(1-b_n)<\infty
\end{equation}
for some $n_0\in\mn$. One particular sequence satisfying these assumptions is given by $b_n=1-n^{-2}$ for $n\in\mn$ (with $c_1=2$ and $c_2=1/2$ in \eqref{aux0}). Note that \eqref{aux0} entails
\begin{equation*}
\lim_{n\to\infty}(1-b_{n+1})/(1-b_n)=1.
\end{equation*}

Suppose we can prove that, for all $\varepsilon>0$, % and some $n_0\in\mn$,
$$I:=\sum_{n\geq n_0}\mmp\Big\{\sup_{b\in [b_n,\, b_{n+1}]}\Big|\sum_{k\geq N_{2,\,\delta}(b)}b^{S_{k-1}}\eta_k\Big|>\varepsilon/ f(b_n)\Big\}<\infty.$$ Then, by the Borel–Cantelli lemma, $$\sup_{b\in [b_n,\, b_{n+1}]}\Big|\sum_{k\geq N_{2,\,\delta}(b)}b^{S_{k-1}}\eta_k\Big|\leq\varepsilon/ f(b_n)$$ for $n$ large enough a.s. Since $f$ is nonnegative and decreasing  on $[(1-e^{-1})^{1/2}, 1)$, we have, for all large enough $n$, $$\Big|\sum_{k\geq N_{2,\,\delta}(b)}b^{S_{k-1}}\eta_k\Big|\leq \sup_{b\in [b_n,\, b_{n+1}]}\Big|\sum_{k\geq N_{2,\,\delta}(b)}b^{S_{k-1}}\eta_k\Big|\leq \varepsilon/f(b_{n})\leq \varepsilon/f(b)$$ a.s.\ whenever $b\in [b_n, b_{n+1}]$. Hence, ${\lim\sup}_{b\to 1-}f(b)\sum_{k\geq N_{2,\,\delta}(b)}b^{S_{k-1}}\eta_k\leq \varepsilon$ a.s.\ which entails the claim.

Since the function $N_{2,\,\delta}$ is nondecreasing on $(0,1)$ we obtain
$$\sup_{b\in [b_n,\,b_{n+1}]}\Big|\sum_{k\geq N_{2,\,\delta}(b)}b^{S_{k-1}}\eta_k\Big|\leq \sup_{b\in [b_n,\,b_{n+1}]}\sum_{k\geq N_{2,\,\delta}(b_n)}b^{S_{k-1}}|\eta_k|.$$ Further, by the strong law of large numbers, for large $n$, % on the event $\{N+2\leq N_2(b_n)\}$,
the latter is estimated from above by $$\sup_{b\in [b_n,\,b_{n+1}]}\sum_{k\geq N_{2,\,\delta}(b_n)}b^{\delta (k-1)}|\eta_k|\leq % \sum_{k\geq N_2(b_n)}b_{n+1}^{\delta(k-1)}|\eta_k|=
\sum_{k\geq N_{2,\,\delta}(b_n)}b_{n+1}^{\delta(k-1)}\me |\eta_k|+\sum_{k\geq N_{2,\,\delta}(b_n)}b_{n+1}^{\delta(k-1)}(|\eta_k|-\me |\eta_k|).$$ % because $S_{k-1}>\delta(k-1)$ for all $k\geq N+2$.
Thus, noting that $\me |\eta_k|\leq 1$,
\begin{eqnarray}\label{aux6}
I\leq \sum_{n\geq n_0}\1_{\{\sum_{k\geq N_{2,\,\delta}(b_n)}b_{n+1}^{\delta (k-1)}>\varepsilon/(2f(b_n))\}}+ \sum_{n\geq n_0}\mmp\Big\{\sum_{k\geq N_{2,\,\delta}(b_n)}b_{n+1}^{\delta(k-1)}(|\eta_k|-\me |\eta_k|)>\varepsilon/(2f(b_n))\Big\}. %\notag %\\&+&\sum_{n\geq n_0}\mmp\{N+2>N_{2,\,\delta}(b_n)\}.
\end{eqnarray}
Using \eqref{aux0} and $-\log x=(1-x)+O((1-x)^2)$ as $x\to 1-$ we obtain
\begin{multline*}
f(b_n) \sum_{k\geq N_{2,\,\delta}(b_n)}b_{n+1}^{\delta(k-1)}= f(b_n)\frac{b_{n+1}^{\delta(N_{2,\,\delta}(b_n)-1)}}{1-b_{n+1}^\delta}~\sim~ f(b_n) \frac{(1-b^{2\delta}_n)^{1/2}}{1-b_n^\delta}\\\sim \frac{(1-b^2_n)^{1/2}}{(2\log\log (1/(1-b_n^2)))^{1/2}}\frac{2\delta^{-1/2}}{(1-b^2_n)^{1/2}}~\to~ 0,\quad n\to\infty.
\end{multline*}
This proves that the first series on the right-hand side of \eqref{aux6} trivially converges, for it contains finitely many nonzero summands. By Markov's inequality and \eqref{aux0}, the probability in the second series is upper bounded by
\begin{multline*}
4\varepsilon^{-2}f^2(b_n) \sum_{k\geq N_{2,\,\delta}(b_n)}b_{n+1}^{2\delta(k-1)}=4\varepsilon^{-2}f^2(b_n)\frac{b_{n+1}^{2\delta(N_{2,\,\delta}(b_n)-1)}}{1-b_{n+1}^{2\delta}}~
\sim~2\varepsilon^{-2}\frac{1-b^2_n}{\log\log (1/(1-b_n^2))} %\frac{1-b^2_n}{1-b_n^{2\delta}}\\\sim 4\delta^{-1} \varepsilon^{-2}\frac{1-b_n}{\log\log (1/(1-b_n^2))}
,\quad n\to\infty.
\end{multline*}
In view of \eqref{aux1}, this is the general term of a convergent series. Hence, the second series on the right-hand side of \eqref{aux6} converges. The proof of Lemma \ref{1} % Claim 1
is complete.
\end{proof}

For $b\in (0,1)$ close to $1$, $\delta$ as above and $\theta>0$, put $$N_{1,\,\delta,\,\theta}(b):=\Big\lfloor \frac{1+\theta}{1-b^{2\delta}}\log\log\frac{1}{1-b^{2\delta}}\Big\rfloor.$$
\begin{lemma}\label{2}
$\lim_{b\to 1-}f(b)\sum_{k=N_{1,\,\delta,\,\theta}(b)+1}^{N_{2,\,\delta}(b)}b^{S_{k-1}}\eta_k=0$ {\rm a.s.}
\end{lemma}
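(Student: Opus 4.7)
The plan is to mimic the proof of Lemma \ref{1}: pick a sparse increasing sequence $(b_n)\uparrow 1$, prove
$$f(b_n)\sum_{k=N_{1,\delta,\theta}(b_n)+1}^{N_{2,\delta}(b_n)}b_n^{S_{k-1}}\eta_k \to 0\quad\text{a.s.}$$
via Chebyshev and Borel--Cantelli, and then extend from $(b_n)$ to all $b\in[b_n,b_{n+1}]$ by an oscillation bound. The new ingredient absent from Lemma \ref{1} is that $\me\eta_k=0$ permits me to work with \emph{second} moments rather than absolute values, and this is what brings the LIL-scaling $1/f(b)\sim(1-b^2)^{-1/2}(\log\log(1-b^2)^{-1})^{1/2}$ into the correct order.

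Set $A(b):=\sum_{k=N_{1,\delta,\theta}(b)+1}^{N_{2,\delta}(b)} b^{S_{k-1}}\eta_k$ and $\mathcal{F}_\infty:=\sigma(\xi_k:k\in\mn)$. Since $(\xi_k,\eta_k)$ is independent of $\mathcal{F}_{k-1}$ with $\me\eta_k=0$ and $\me\eta_k^2={\tt s}^2=1$, one has
$$\me\bigl(A(b)^2\mid\mathcal{F}_\infty\bigr)=\sum_{k=N_{1}+1}^{N_2} b^{2S_{k-1}}.$$
Because $\xi$ may be negative, $\me b^{2\xi}$ may be infinite, so I would work pathwise. By the SLLN, for any fixed $\delta'\in(0,1)$ there is a full-probability event $\Omega^\ast$ on which a random $K(\omega)$ satisfies $S_{k-1}\geq(1-\delta')(k-1)$ for every $k\geq K(\omega)$; since $N_1(b)\to\infty$ as $b\to 1-$, eventually $N_1(b)>K$, and on $\Omega^\ast$, for $b$ close to $1$,
$$\sum_{k=N_{1}+1}^{N_2} b^{2S_{k-1}}\leq \frac{b^{2(1-\delta')N_1(b)}}{1-b^{2(1-\delta')}}.$$
The asymptotics $1-b^{2\delta}\sim\delta(1-b^2)$ and $\log b\sim-(1-b^2)/2$ give $b^{2N_1(b)}\sim(\log(1/(1-b^2)))^{-(1+\theta)/\delta}$; so conditional Chebyshev, combined with the fact that the right-hand side above is a deterministic function of $b$ on $\Omega^\ast$, yields
$$\mmp\bigl(\{|A(b_n)|>\varepsilon/f(b_n)\}\cap\Omega^\ast_n\bigr) \leq \frac{f(b_n)^2\, b_n^{2(1-\delta')N_1(b_n)}}{(1-b_n^{2(1-\delta')})\,\varepsilon^2}$$
where $\Omega^\ast_n:=\Omega^\ast\cap\{N_1(b_n)>K\}\uparrow\Omega^\ast$. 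For $b_n=1-\rho^n$ with any $\rho\in(0,1)$, the right-hand side is of order $n^{-(1-\delta')(1+\theta)/\delta}/\log n$. Since $1+\theta>\delta$ for all $\delta\in(0,1),\theta>0$, choosing $\delta'$ small makes $(1-\delta')(1+\theta)/\delta>1$, so the series converges and conditional Borel--Cantelli gives $f(b_n)|A(b_n)|\to 0$ a.s.

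To extend this to all $b\in[b_n,b_{n+1}]$, I would bound $|A(b)-A(b_n)|$ by decomposing it into a weight-change piece $\sum_k(b^{S_{k-1}}-b_n^{S_{k-1}})\eta_k$ (controlled pathwise on $\Omega^\ast$ by the mean-value inequality $|b^{S_{k-1}}-b_n^{S_{k-1}}|\leq(b_{n+1}-b_n)S_{k-1}b_n^{S_{k-1}-1}$ used in the tightness proof of Theorem \ref{main2}) and an index-change piece (handled by Doob's $L^2$-maximal inequality applied to the martingale $\ell\mapsto\sum_{k=N_1(b_n)+1}^\ell b_n^{S_{k-1}}\eta_k$). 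Both pieces admit the same Chebyshev-plus-Borel--Cantelli treatment as above. I expect the variance bound in the second paragraph to be the main obstacle: converting the random sum $\sum_{k=N_{1}+1}^{N_2} b^{2S_{k-1}}$ into a deterministic decay of order $b^{2N_1}/(1-b^2)$ in the absence of a finite Laplace transform for $\xi$ is precisely the ``additional effort needed to justify the replacement of $S_n$ with $\mu n$'' noted at the start of the proof of Theorem \ref{main}, and the pathwise SLLN argument above is the cleanest way around it.
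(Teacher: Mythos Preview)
Your conditional-variance identity $\me(A(b)^2\mid\mathcal{F}_\infty)=\sum_{k}b^{2S_{k-1}}$ fails under the paper's standing hypotheses: the components of $(\xi,\eta)$ are allowed to be \emph{arbitrarily dependent}, so conditioning on $\mathcal{F}_\infty=\sigma(\xi_j:j\ge1)$ does not leave $\eta_k$ centered---one has $\me(\eta_k\mid\mathcal{F}_\infty)=\me(\eta_k\mid\xi_k)$, which need not vanish, and likewise $\me(\eta_k^2\mid\mathcal{F}_\infty)=\me(\eta_k^2\mid\xi_k)$ need not equal $1$. The independence you invoke is that of $(\xi_k,\eta_k)$ from the \emph{past} $\mathcal{F}_{k-1}$; this gives the martingale-difference structure for the filtration $(\mathcal{F}_k)$ but says nothing about conditioning on the entire $\xi$-path. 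You correctly observe that the unconditional second moment may be infinite because $\me b^{2\xi}$ can blow up; the pathwise conditioning was meant to sidestep this, but it simultaneously destroys the orthogonality you need. The gap can be closed with the truncation device from the tightness proof in Section~4.2: insert the $\mathcal{F}_{k-1}$-measurable indicator $\1_{\{|S_{k-1}-(k-1)|\le\delta'(k-1)\}}$, so that the increments remain martingale differences while the weights are deterministically bounded by $b^{(1-\delta')(k-1)}$, and note that the discarded piece is eventually empty since $N_{1,\delta,\theta}(b)\to\infty$.

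With that repair your Chebyshev--Borel--Cantelli scheme along $b_n=1-\rho^n$ can presumably be completed, though the interpolation step is only sketched. It is worth contrasting this with the paper's own argument, which is both shorter and structurally different: it applies summation by parts to rewrite the sum in terms of $T_k=\eta_1+\cdots+\eta_k$, then feeds in the \emph{classical} law of the iterated logarithm $|T_k|=O((k\log\log k)^{1/2})$ pathwise together with the strong law for $(S_k)$. No subsequence, no second-moment bound, and no oscillation estimate are needed; the conclusion follows directly for all $b\to1-$ on a single full-probability event. Your route effectively re-derives an LIL-strength bound from variance estimates, duplicating information that the classical LIL for $(T_k)$ already supplies.
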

\begin{proof}
Similarly to \eqref{summat}, summation by parts yields $$\sum_{k=N_{1,\,\delta,\,\theta}(b)+1}^{N_{2,\,\delta}(b)}b^{S_{k-1}}\eta_k=\sum_{k=N_{1,\,\delta,\,\theta}(b)+1}^{N_{2,\,\delta}(b)-1}(b^{S_{k-1}}-b^{S_k})T_k+
b^{S_{N_{2,\,\delta}(b)-1}}T_{N_{2,\,\delta}(b)}-b^{S_{N_{1,\,\delta,\,\theta}(b)}}T_{N_{1,\,\delta,\,\theta}(b)},$$ where, as in the proof of Theorem \ref{main1}, $T_k=\eta_1+\ldots+\eta_k$ for $k\in\mn$. By the strong law of large numbers, for $b$ close to $1$,
$$|b^{S_{N_{2,\,\delta}(b)-1}}T_{N_{2,\,\delta}(b)}-b^{S_{N_{1,\,\delta,\,\theta}(b)}}T_{N_{1,\,\delta,\,\theta}(b)}|\leq b^{\delta (N_{2,\,\delta}(b)-1)}|T_{N_{2,\,\delta}(b)}|+ b^{\delta N_{1,\,\delta,\,\theta}(b)}|T_{N_{1,\,\delta,\,\theta}(b)}|.$$ One can check that
\begin{equation}\label{aux55}
b^{\delta N_{1,\,\delta,\,\theta}(b)}~\sim~\Big(\log\frac{1}{1-b^{2\delta}}\Big)^{-(1+\theta)/2}\quad\text{and}\quad b^{\delta N_{2,\,\delta}(b)}~\sim~(1-b^{2\delta})^{1/2}\quad\text{a.s.~as}~~b\to 1-.
\end{equation}
Further, recall that, as $\ell\to\infty$, % $$|T_\ell|=O\big((\ell \log\log \ell)^{1/2}\big)\quad \text{a.s.}$$ and
\begin{equation}\label{lilweak}
|T_\ell|\leq \sup_{k\leq \ell}\,|T_k|=O\big((\ell \log\log \ell)^{1/2}\big)\quad \text{a.s.}
\end{equation}
by the law of the iterated logarithm for standard random walks. Using this limit relation we infer % obtain
\begin{multline}\label{aux120}
f(b)|b^{S_{N_{2,\,\delta}(b)-1}}T_{N_{2,\,\delta}(b)}-b^{S_{N_{1,\,\delta,\,\theta}(b)}}T_{N_{1,\,\delta,\,\theta}(b)}|=O(((1-b)\log (1/(1-b)))^{1/2}) %O(((1-b)\log\log (1/(1-b)))^{1/2})
\\+O\Big(\Big(\frac{\log\log (1/(1-b))}{(\log (1/(1-b)))^{1+\theta}}\Big)^{1/2}\Big)~\to~0\quad\text{a.s.~as}~~b\to 1-.
\end{multline}
According to \eqref{aux53}, for $b$ close to $1$, $$\Big|\sum_{k=N_{1,\,\delta,\,\theta}(b)+1}^{N_{2,\,\delta}(b)-1}(b^{S_{k-1}}-b^{S_k})T_k\Big|\leq {\rm const}\,|\log b|(\sup_{k\leq N_{2,\,\delta}(b)}|T_k|)\sum_{k\geq N_{1,\,\delta,\,\theta}(b)+1} b^{\delta k} |\xi_k|.$$  With the help of \eqref{aux55} we obtain $$\sum_{k\geq N_{1,\,\delta,\,\theta}(b)+1} b^{\delta k} |\xi_k|~\sim~\me |\xi|\frac{b^{\delta N_{1,\,\delta,\,\theta}(b)}}{1-b^\delta}~\sim~\frac{\me |\xi|}{(1-b^\delta)(\log(1/(1-b^{2\delta})))^{(1+\theta)/2}}\quad\text{a.s.~as}~~b\to 1-$$ by an application of Lemma \ref{stronglaw} with $\eta=|\xi|$ and $a=\infty$. This in combination with \eqref{lilweak} yields
\begin{equation}\label{eq:aux4}
f(b)\Big|\sum_{k=N_{1,\,\delta,\,\theta}(b)+1}^{N_{2,\,\delta}(b)-1}(b^{S_{k-1}}-b^{S_k})T_k\Big|=O\Big(\frac{1}{(\log (1/(1-b)))^{\theta/2}}\Big)~\to~0\quad\text{a.s.~as}~~b\to 1-.
\end{equation}
The proof of Lemma \ref{2} is complete.
\end{proof}

For $b\in (0,1)$ close to $1$, put $$N_2(b):=\Big\lfloor \frac{1}{1-b^2}\log\frac{1}{1-b^2}\Big\rfloor.$$ We claim that $$\lim_{b\to 1-}f(b)\sum_{k=N_2(b)+1}^{N_{2,\,\delta}(b)}b^{S_{k-1}}\eta_k=0\quad {\rm a.s.}$$ For the most part, this follows by repeating the proof of Lemma \ref{2} with $N_2(b)$ replacing $N_{1,\,\delta,\,\theta}(b)$, the only changes being that the second summand on the right-hand side of \eqref{aux120} and the right-hand side of \eqref{eq:aux4} are $O(((1-b)^\delta \log (1/(1-b)))^{1/2})$ as $b\to 1-$. The last centered formula in combination with Lemma \ref{1} enable us to conclude that
\begin{equation}\label{aux121}
\lim_{b\to 1-}f(b)\sum_{k\geq N_2(b)+1} b^{S_{k-1}}\eta_k=0\quad {\rm a.s.}
\end{equation}
This limit relation will be used in the proof of Proposition \ref{lilhalf2}.

Denote by $\mathcal{F}_0$ the trivial $\sigma$-algebra and recall that, for $k\in\mn$, $\mathcal{F}_k$ denotes the $\sigma$-algebra generated by $(\xi_j, \eta_j)_{1\leq j\leq k}$ and that, for $k\in\mn_0$, we write $\me_k(\cdot)$ for $\me(\cdot|\mathcal{F}_k)$.
\begin{lemma}\label{3}
For all $\rho>0$,
\begin{equation}\label{aux010}
\lim_{b\to 1-}f(b)\sum_{k=1}^{N_{1,\,\delta,\,\theta}(b)}b^{S_{k-1}}\eta_k\1_{\mathcal{S}_k(b)}
=0\quad\text{{\rm a.s.}}
\end{equation}
and
\begin{equation}\label{aux011}
\lim_{b\to 1-}f(b)\sum_{k=1}^{N_{1,\,\delta,\,\theta}(b)}b^{S_{k-1}}\me_{k-1}(\eta_k\1_{\mathcal{S}_k(b)})
=0\quad\text{{\rm a.s.}},
\end{equation}
where $\mathcal{S}_k(b):=\{|\eta_k|>\rho b^{-S_{k-1}}((1-b^{2\delta})\log\log (1/(1-b^{2\delta})))^{-1/2}\}$.
\end{lemma}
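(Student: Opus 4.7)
The plan is to estimate both sums via a Markov-type inequality that upgrades the linear factor $|\eta_k|$ (respectively $\eta_k$ inside the conditional mean) to a quadratic one, thereby reducing matters to Theorem \ref{main1} applied with discount factor $b^2$ and summand $\eta_k^2$ in place of $\eta_k$. Writing $g(b):=((1-b^{2\delta})\log\log(1/(1-b^{2\delta})))^{-1/2}$ so that $\mathcal{S}_k(b)=\{|\eta_k|>\rho b^{-S_{k-1}}g(b)\}$, the pivotal computation is the asymptotic
\begin{equation*}
\frac{f(b)}{g(b)(1-b^2)}~\longrightarrow~(\delta/2)^{1/2},\qquad b\to 1-,
\end{equation*}
which follows from $1-b^{2\delta}\sim \delta(1-b^2)$ combined with $\log\log(1/(1-b^{2\delta}))\sim \log\log(1/(1-b^2))$. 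As in the proof of Proposition \ref{lilhalf}, I would keep the normalization $\mu={\tt s}^2=1$ in force throughout.

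For \eqref{aux010} I would start from the pointwise estimate
\begin{equation*}
b^{S_{k-1}}|\eta_k|\1_{\mathcal{S}_k(b)}~\leq~\frac{b^{2S_{k-1}}\eta_k^2\1_{\mathcal{S}_k(b)}}{\rho g(b)},
\end{equation*}
which is immediate from the defining inequality of $\mathcal{S}_k(b)$. By the strong law of large numbers there exists an a.s.\ finite random $T_0=T_0(\omega)$ such that $S_{k-1}\geq 0$ for all $k>T_0$; for such $k$ the bound $b^{-S_{k-1}}\geq 1$ yields $\mathcal{S}_k(b)\subseteq\{|\eta_k|>\rho g(b)\}$. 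The finitely many terms with $k\leq T_0$ contribute $O_\omega(1)$ to the sum, which disappears after multiplication by $f(b)\to 0$. For the rest I would fix any $T>0$, take $b$ close enough to $1$ so that $\rho g(b)>T$, replace $\1_{\mathcal{S}_k(b)}$ by $\1_{\{|\eta_k|>T\}}$, and apply Theorem \ref{main1} to $(\eta_k^2\1_{\{|\eta_k|>T\}})_{k\in\mn}$ with discount factor $b^2$:
\begin{equation*}
(1-b^2)\sum_{k\geq 1}b^{2S_{k-1}}\eta_k^2\1_{\{|\eta_k|>T\}}~\longrightarrow~\me[\eta^2\1_{\{|\eta|>T\}}]\quad\text{a.s.}
\end{equation*}
Combined with the ratio asymptotic displayed above, this yields $\limsup_{b\to 1-}f(b)\sum_{k=1}^{N_{1,\,\delta,\,\theta}(b)}b^{S_{k-1}}|\eta_k|\1_{\mathcal{S}_k(b)}\leq (\delta/2)^{1/2}\rho^{-1}\me[\eta^2\1_{\{|\eta|>T\}}]$ a.s., and letting $T\to\infty$ closes \eqref{aux010}.

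For \eqref{aux011} I would exploit that $\eta_k$ is independent of $\mathcal{F}_{k-1}$ to write $\me_{k-1}(\eta_k\1_{\mathcal{S}_k(b)})=G(S_{k-1},b)$ with $G(s,b):=\me[\eta\1_{\{|\eta|>\rho b^{-s}g(b)\}}]$. The inequality $|G(s,b)|\leq \me[\eta^2\1_{\{|\eta|>\rho b^{-s}g(b)\}}]/(\rho b^{-s}g(b))$, combined with $s\geq 0$ (valid for $k>T_0$), delivers $b^s|G(s,b)|\leq b^{2s}\me[\eta^2\1_{\{|\eta|>\rho g(b)\}}]/(\rho g(b))$. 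The very same combination of Theorem \ref{main1} (for the $b^2$-perpetuity with summand $1$) and the ratio asymptotic then produces an upper bound of $(\delta/2)^{1/2}\rho^{-1}\me[\eta^2\1_{\{|\eta|>\rho g(b)\}}]$, which is $o(1)$ because $g(b)\to\infty$ and $\me\eta^2<\infty$. I do not expect any substantive obstacle: both claims collapse onto the same two ingredients---the ratio asymptotic for $f/g$ and the strong law of large numbers for the $b^2$-perpetuity of $\eta_k^2$---and the only delicate point is the routine bookkeeping for the finite set $\{k\leq T_0\}$ on which $S_{k-1}$ may be negative.
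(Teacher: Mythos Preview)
Your argument is correct and is considerably more direct than the paper's own proof. The key observation---that on $\mathcal{S}_k(b)$ one has $b^{S_{k-1}}|\eta_k|\leq b^{2S_{k-1}}\eta_k^2/(\rho g(b))$, so that the ratio $f(b)/(g(b)(1-b^2))\to(\delta/2)^{1/2}$ reduces everything to the strong law for the $b^2$-perpetuity of $\eta_k^2$---is clean and avoids substantial machinery. The handling of the finitely many indices $k\leq T_0$ with $S_{k-1}<0$, the replacement of the $b$-dependent threshold by a fixed $T$ followed by $T\to\infty$, and the conditional-expectation version for \eqref{aux011} are all in order.

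The paper takes a markedly different route. It first compares the threshold $\rho b^{-S_{k-1}}g(b)$ to the $k$-dependent quantity $\rho e^{-\varepsilon}(k/\log\log k)^{1/2}$ (inequality \eqref{aux52}, proved separately on $3\le k\le N_\delta(b)$ and $k>N_\delta(b)$), then invokes a summability estimate imported from \cite{Bovier+Picco:1993} (formula \eqref{aux012}) together with Kronecker's lemma to control $\sum_{k\le K(b)}|\eta_k|\1_{\{|\eta_k|>\rho e^{-\varepsilon}(k/\log\log k)^{1/2}\}}$, and finally splits the range $[1,N_{1,\delta,\theta}(b)]$ at $N_\delta(b)$ with an Abel summation for the upper piece. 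Your approach trades this $k$-indexed machinery for a single appeal to Theorem~\ref{main1} (with $\eta_k^2\1_{\{|\eta_k|>T\}}$, respectively the constant $1$, as summand), which is possible because you do not discard the factor $b^{2S_{k-1}}$ after the Markov step. The gain is brevity and self-containment: you do not need the external result \eqref{aux012}, nor the two-range decomposition, nor the summation by parts. The paper's approach hews more closely to the Bovier--Picco template for random power series, where the analogue of Theorem~\ref{main1} is unavailable for the second-moment perpetuity and the $k$-dependent truncation is the natural device; in the present perpetuity setting your shortcut exploits exactly the extra input that Theorem~\ref{main1} provides.
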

\begin{proof}
We only give a detailed proof of \eqref{aux010} and then explain which modifications are needed for a proof of \eqref{aux011}.

\noindent {\sc Proof of \eqref{aux010}}. For $b\in (0,1)$ and the same $\delta\in (0,1)$ as before, put $$N_\delta(b):=\Big\lfloor\frac{1}{1-b^{2\delta}}\Big\rfloor.$$ Plainly, for all $\rho>0$,
$$\lim_{b\to 1-}f(b)\sum_{k=1}^2 b^{S_{k-1}} \eta_k\1_{\{|\eta_k|>\rho b^{-S_{k-1}}((1-b^{2\delta})\log\log (1/(1-b^{2\delta})))^{-1/2}\}}=0\quad \text{a.s.}$$
We first show that, for $k\geq 3$, $b$ close to $1$ and $\varepsilon>0$ to be defined below,
\begin{equation}\label{aux52}
b^{-S_{k-1}} \Big(\frac{1}{(1-b^{2\delta})\log\log (1/(1-b^{2\delta}))}\Big)^{1/2} \geq e^{-\varepsilon} \Big(\frac{k}{\log\log k}\Big)^{1/2}\quad \text{a.s.}
\end{equation}

Let $3\leq k\leq N_\delta(b)$. The function $x\mapsto x/\log\log x$ is increasing for large $x$, whence %, for $3\leq k\leq N(b)$,
$$\frac{1}{(1-b^{2\delta})\log\log (1/(1-b^{2\delta}))}\geq \frac{k}{\log\log k}.$$ Further, for $1\leq k\leq N_\delta(b)$, $$b^{-S_{k-1}}\geq e^{(-\log b) S_{k-1}}\geq e^{(-\log b) \inf_{0\leq i\leq N_\delta(b)-1}S_i}\quad\text{a.s.}$$ Since $\lim_{n\to\infty} S_n=+\infty$ a.s., we infer $|\inf_{i\geq 0}S_i|<\infty$ a.s.\ and thereupon $$\lim_{b\to 1-}(\log b) \inf_{1\leq i\leq N_\delta(b)}S_i=0\quad\text{a.s.}$$ Thus, given $\varepsilon>0$ there exists a random variable $b_\ast$ such that $b^{-S_{k-1}}\geq e^{-\varepsilon}$ whenever $3\leq k\leq N_\delta(b)$ and $b\in (b_\ast, 1)$ (of course, $b^{-S_{k-1}}\geq 1$ a.s.\ for all $k\in\mn$ provided that $\xi\geq 0$ a.s.). Thus, \eqref{aux52} does hold true in the present range of $k$.

Let $k\geq N_\delta(b)+1$. By the strong law of large numbers, $b^{S_{k-1}}\leq b^{\delta(k-1)}$ a.s.\ for $b$ close to $1$. Put $$\alpha_k(b):=b^{\delta(k-1)}\Big(\frac{k}{\log\log k}\Big)^{1/2}.$$ We claim that the sequence $(\alpha_k(b))_{k\geq N_\delta(b)+1}$ is nonincreasing. Indeed, $$\alpha_{k+1}(b)/\alpha_k(b)\leq b^\delta (1+1/k)^{1/2}\leq  b^\delta (1+1/(2k)) \leq b^\delta (1+(1-b^{2\delta})/2)\leq 1.$$ We have used $\max_{b\in [0,1]}(3b^\delta-b^{3\delta})=2$ for the last step. Hence, for $b$ close to $1$,
\begin{multline*}
b^{S_{k-1}}\Big(\frac{k}{\log\log k}\Big)^{1/2}\leq b^{\delta(k-1)} \Big(\frac{k}{\log\log k}\Big)^{1/2}\leq b^{\delta N_\delta(b)} \Big(\frac{N_\delta(b)}{\log\log N_\delta(b)}\Big)^{1/2}\\\leq b^{\delta(1-b^{2\delta})^{-1}} \Big(\frac{1}{(1-b^{2\delta})\log\log (1/(1-b^{2\delta}))}\Big)^{1/2}\leq e^\varepsilon \Big(\frac{1}{(1-b^{2\delta})\log\log (1/(1-b^{2\delta}))}\Big)^{1/2}
\end{multline*}
having utilized $\lim_{b\to 1-}b^{\delta(1-b^{2\delta})^{-1}}=e^{-1/2}$ for the last inequality. The proof of \eqref{aux52} is complete.

For $b\in (0,1)$, let $K(b)$ be positive integers satisfying $\lim_{b\to 1-}K(b)=\infty$. In view of \eqref{aux52}, for $b$ close to $1$,
\begin{equation}\label{aux1.5}
\Big|\sum_{k=3}^{K(b)}b^{S_{k-1}}\eta_k\1_{\mathcal{S}_k(b)} % \1_{\{|\eta_k|>\delta b^{-S_{k-1}}((1-b^2)\log\log (1/(1-b^2)))^{-1/2}\}}
\Big|\leq e^\varepsilon \sum_{k=3}^{K(b)}|\eta_k|\1_{\{|\eta_k|>\rho e^{-\varepsilon}(k/\log\log k)^{1/2}\}}\quad \text{a.s.}
\end{equation}
and
\begin{equation}\label{aux1.6}
\sum_{k=3}^{K(b)} |\eta_k|\1_{\mathcal{S}_k(b)}\leq \sum_{k=3}^{K(b)}|\eta_k|\1_{\{|\eta_k|>\rho e^{-\varepsilon}(k/\log\log k)^{1/2}\}}\quad \text{a.s.}
\end{equation}
It is shown in the proof of Lemma 2.3 in \cite{Bovier+Picco:1993} that
\begin{equation}\label{aux012}
\sum_{k\geq 5}(k \log\log k)^{-1/2}\me (|\eta_k|\1_{\{|\eta_k|>\rho e^{-\varepsilon}(k/ \log\log k)^{1/2}\}})<\infty
\end{equation}
which particularly entails $$\sum_{k\geq 5}(k \log\log k)^{-1/2}|\eta_k|\1_{\{|\eta_k|>\rho e^{-\varepsilon}(k/ \log\log k)^{1/2}\}}<\infty\quad \text{a.s.}$$ By Kronecker's lemma, we obtain  %this entails
\begin{equation}\label{aux2}
\lim_{b\to 1-}(K(b)\log\log K(b))^{-1/2}\sum_{k=3}^{K(b)}|\eta_k|\1_{\{|\eta_k|>\rho e^{-\varepsilon}(k/\log\log k)^{1/2}\}} =0\quad\text{a.s.}
\end{equation}

We treat the sums $\sum_{k=3}^{N_\delta(b)}$ and $\sum_{k=N_\delta(b)+1}^{N_{1,\,\delta,\,\theta}(b)}$ % $\sum_{k=N_\delta(b)+1}^{N_{2,\,\delta}(b)}$
separately. Relation % Now \eqref{aux1.5} and
\eqref{aux2} with $K(b)=N_\delta(b)$ implies that, for all $\rho>0$,
\begin{equation}\label{aux4}
\lim_{b\to 1-}f(b)\sum_{k=1}^{N_\delta(b)}b^{S_{k-1}}\eta_k\1_{\mathcal{S}_k(b)} %\1_{\{|\eta_k|>\delta b^{-S_{k-1}}((1-b^2)\log\log (1/(1-b^2)))^{-1/2}\}}
=0\quad \text{a.s.}
\end{equation}
To deal with the second sum, we write, for $b$ close to $1$,
\begin{multline*}
\Big|\sum_{k=N_\delta(b)+1}^{N_{1,\,\delta,\,\theta}(b)}b^{S_{k-1}}\eta_k\1_{\mathcal{S}_k(b)}\Big|\leq \sum_{k=N_\delta(b)+1}^{N_{1,\,\delta,\,\theta}(b)}b^{\delta(k-1)}|\eta_k|\1_{\mathcal{S}_k(b)}=-b^{\delta N_\delta(b)}\sum_{k=1}^{N_\delta(b)} |\eta_k|\1_{\mathcal{S}_k(b)}\\+b^{\delta(N_{1,\,\delta,\,\theta}(b)-1)}\sum_{k=1}^{N_{1,\,\delta,\,\theta}(b)} |\eta_k|\1_{\mathcal{S}_k(b)}+(1-b^\delta)\sum_{k=N_\delta(b)+1}^{N_{1,\,\delta,\,\theta}(b)-1}b^{\delta (k-1)} \sum_{j=1}^k |\eta_j|\1_{\mathcal{S}_j(b)}=:I_1(b)+I_2(b)+I_3(b)
\end{multline*}
having utilized the strong law of large numbers for the inequality.

\noindent {\it Analysis of $I_1$}.
The limit relation $\lim_{b\to 1-}b^{\delta N_\delta(b)}=e^{-1/2}$ together with \eqref{aux1.6} and \eqref{aux2} in which we take $K(b)=N_\delta(b)$ proves $\lim_{b\to 1-}f(b)I_1(b)=0$ a.s.

\noindent {\it Analysis of $I_2$}. Using \eqref{aux1.6} and \eqref{aux2} with $K(b)=N_{1,\,\delta,\,\theta}(b)$ we infer
\begin{equation*}
\lim_{b\to 1-}(N_{1,\,\delta,\,\theta}(b)\log\log N_{1,\,\delta,\,\theta}(b))^{-1/2}\sum_{k=1}^{N_{1,\,\delta,\,\theta}(b)}|\eta_k|\1_{\mathcal{S}_k(b)}
=0\quad \text{a.s.}
\end{equation*}
Combining this with the first part of \eqref{aux55} we obtain $\lim_{b\to 1-} f(b)I_2(b)=0$ a.s.

\noindent {\it Analysis of $I_3$}. Write
\begin{multline*}
I_3(b)\leq (1-b^\delta) (\sup_{N_\delta(b)+1\leq k\leq N_{1,\,\delta,\,\theta}(b)-1}\,T_k(b))\sum_{k\geq 3} b^{\delta(k-1)}(k\log\log k)^{1/2}\\\sim
(\pi^{1/2}/2)\Big(\frac{\log\log (1/(1-b^\delta))}{1-b^\delta}\Big)^{1/2} (\sup_{N_\delta(b)+1\leq k\leq N_{1,\,\delta,\,\theta}(b)-1}\,T_k(b)),\quad b\to 1-,
\end{multline*}
where $$T_k(b):=(k\log\log k)^{-1/2}\sum_{j=1}^k |\eta_j|\1_{\mathcal{S}_j(b)}.$$ We have used Corollary 1.7.3 in \cite{Bingham+Goldie+Teugels:1989} for the asymptotic equivalence. In view of \eqref{aux2} with $K(b)=N_\delta(b)+1$ and \eqref{aux1.6}, $\lim_{b\to 1-}\sup_{N_\delta(b)+1\leq k\leq N_{1,\,\delta\,\theta}(b)-1}\,T_k(b)=0$ a.s., whence $\lim_{b\to 1-} f(b)I_3(b)=0$ a.s. The proof of \eqref{aux010} is complete.

\noindent {\sc Proof of \eqref{aux011}}. Similarly to \eqref{aux4}, we obtain with the help of
\begin{equation*}
\Big|\sum_{k=3}^{K(b)}b^{S_{k-1}}\me_{k-1}(\eta_k\1_{\mathcal{S}_k(b)})
\Big|\leq e^\varepsilon \sum_{k=3}^{K(b)}\me |\eta_k|\1_{\{|\eta_k|>\rho e^{-\varepsilon}(k/\log\log k)^{1/2}\}}\quad \text{a.s.}
\end{equation*}
(a counterpart of \eqref{aux1.5}) and \eqref{aux012} that
$$\lim_{b\to 1-}f(b)\sum_{k=1}^{N_\delta(b)}b^{S_{k-1}}\me_{k-1}(\eta_k\1_{\mathcal{S}_k(b)})=0\quad\text{a.s.}$$ By the same reasoning, we also conclude that $\lim_{b\to 1-} f(b)I^\ast_\ell(b)=0$ a.s., $\ell=1,2,3$, where $I_\ell^\ast(b)$ is a counterpart of $I_\ell(b)$ in which $|\eta_k|\1_{\mathcal{S}_k(b)}$ is replaced with $\me_{k-1}(|\eta_k|\1_{\mathcal{S}_k(b)})$.

The proof of Lemma \ref{3} is complete.
\end{proof}

As usual, $\mathcal{S}^c_k(b)$ will denote the complement of $\mathcal{S}_k(b)$, that is,
$$\mathcal{S}^c_k(b)=\{|\eta_k|\leq \rho b^{-S_{k-1}}((1-b^{2\delta})\log\log (1/(1-b^{2\delta})))^{-1/2}\}.$$ Denote by $\mathbb{B}$ the class of increasing sequences $(\tb_n)_{n\in\mn}$ of positive numbers satisfying the following properties:

\noindent (a) $\lim_{n\to\infty}\tb_n=1$ and $\lim_{n\to\infty}\frac{1-\tb_{n+1}}{1-\tb_n}=1$;

\noindent (b) $\lim_{n\to\infty} \frac{\tb_{n+1}-\tb_n}{1-\tb_n}\Big(\log\log \frac{1}{1-\tb_n}\Big)^{3/2}=0$;

\noindent (c) for all $\varepsilon>0$, $\sum_{n\geq 1}\Big(\log\Big(\frac{1}{1-\tb_n}\Big)\Big)^{-1-\varepsilon}<\infty$.

One can check that any increasing sequence $(\texttt{b}_n)_{n\in\mn}$ of positive numbers satisfying $\tb_n=\exp(-(1-(\log n)^{-3})^n)$ for large $n$ belongs to the class $\mathbb{B}$. For instance, for the so defined $\tb_n$ we have $$\frac{\tb_{n+1}-\tb_n}{1-\tb_n}\sim (\log n)^{-3}\quad\text{and}\quad \log\log \frac{1}{1-\tb_n}\sim \log n,\quad n\to\infty$$ which verifies the property (b).

Recall that `i.o.' is a shorthand for `infinitely often' and that, for a sequence of sets $A_1$, $A_2,\ldots$, $$\{A_n ~\text{{\rm i.o.}}\}:=\{\cup_{n\geq 1}\cap_{k\geq n}A_k\}.$$
\begin{lemma}\label{4}
Let $(\tb_n)_{n\in\mn}\in \mathbb{B}$. Then, % for $n\in\mn$,
for all $\varepsilon>0$, $$\mmp\Big\{\sum_{k=1}^{N_{1,\,\delta,\,\theta}(\tb_n)}\tb_n^{S_{k-1}}\tilde \eta_k(\tb_n)>\big((2+\varepsilon)N(\tb_n)\log\log N(\tb_n)\big)^{1/2}\quad\text{{\rm i.o.}}\Big\}=0,$$
where $\tilde \eta_k(b):=\eta_k\1_{\mathcal{S}^c_k(b)}-\me_{k-1}(\eta_k\1_{\mathcal{S}^c_k(b)})$ for $k\in\mn$ and $N(b):=(1-b^2)^{-1}$ for $b\in (0,1)$.
\end{lemma}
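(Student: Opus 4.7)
My plan is to apply the Borel--Cantelli lemma via an exponential concentration inequality for martingales. For each fixed $n$, the sequence $(X_k^{(n)})_{k\geq 1}$ with $X_k^{(n)}:=\tb_n^{S_{k-1}}\tilde\eta_k(\tb_n)$ is a martingale difference sequence for $(\mathcal{F}_k)$: the factor $\tb_n^{S_{k-1}}$ is $\mathcal{F}_{k-1}$-measurable, and $\tilde\eta_k(\tb_n)$ is $\me_{k-1}$-centered by construction. Moreover, on $\mathcal{S}_k^c(\tb_n)$ the variable $|\tb_n^{S_{k-1}}\eta_k|$ is deterministically bounded a.s.\ by $\rho((1-\tb_n^{2\delta})\log\log(1/(1-\tb_n^{2\delta})))^{-1/2}$, and the same estimate holds for $\tb_n^{S_{k-1}}|\me_{k-1}(\eta_k\1_{\mathcal{S}_k^c(\tb_n)})|$. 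Thus $|X_k^{(n)}|\leq c_n$ a.s., with $c_n:=2\rho((1-\tb_n^{2\delta})\log\log(1/(1-\tb_n^{2\delta})))^{-1/2}$.

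\textbf{Variance control and Freedman's inequality.} Since $\me_{k-1}(\tilde\eta_k(\tb_n))^2\leq\me\eta^2=1$, the sum of conditional variances satisfies $V_n:=\sum_{k=1}^{N_{1,\,\delta,\,\theta}(\tb_n)}\me_{k-1}(X_k^{(n)})^2\leq\sum_{k\geq 0}\tb_n^{2S_k}$. Theorem \ref{main1} applied with $b\mapsto b^2$ (legitimate under $\mu={\tt s}^2=1$) yields $(1-\tb_n^2)\sum_{k\geq 0}\tb_n^{2S_k}\to 1$ a.s., so for any $\gamma\in(0,1)$ the event $A_n:=\{\sum_{k\geq 0}\tb_n^{2S_k}\leq(1+\gamma)N(\tb_n)\}$ holds for all sufficiently large $n$ a.s., in particular $\mmp(A_n^c\text{ i.o.})=0$. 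Setting $x_n:=((2+\varepsilon)N(\tb_n)\log\log N(\tb_n))^{1/2}$, Freedman's exponential inequality for bounded martingale differences gives
$$\mmp\Big\{\sum_{k=1}^{N_{1,\,\delta,\,\theta}(\tb_n)}X_k^{(n)}>x_n,\;A_n\Big\}\;\leq\;\exp\!\Big(-\frac{x_n^2}{2\bigl((1+\gamma)N(\tb_n)+c_n x_n/3\bigr)}\Big).$$

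\textbf{Estimating the exponent.} Using $1-\tb_n^{2\delta}\sim\delta(1-\tb_n^2)$ and $\log\log(1/(1-\tb_n^{2\delta}))\sim\log\log N(\tb_n)$ as $n\to\infty$, I obtain $c_n\sim 2\rho(N(\tb_n)/(\delta\log\log N(\tb_n)))^{1/2}$, and a direct computation yields $c_n x_n = 2\rho\sqrt{(2+\varepsilon)/\delta}\,N(\tb_n)(1+o(1))$. Choosing first $\gamma$ and then $\rho$ small enough (as functions of $\varepsilon$ and $\delta$), the ratio $x_n^2/[2((1+\gamma)N(\tb_n)+c_n x_n/3)]$ is bounded below by $(1+\beta)\log\log N(\tb_n)$ for some $\beta=\beta(\varepsilon,\rho,\delta,\gamma)>0$. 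Consequently each probability above is at most $(\log N(\tb_n))^{-(1+\beta)}\asymp(\log(1/(1-\tb_n)))^{-(1+\beta)}$, which is summable in $n$ by property~(c) of $\mathbb{B}$. Together with $\mmp(A_n^c\text{ i.o.})=0$, the Borel--Cantelli lemma yields the conclusion.

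\textbf{Main obstacle.} The delicate issue is that in Freedman's bound the two terms $(1+\gamma)N(\tb_n)$ and $c_n x_n/3$ have the \emph{same order} $N(\tb_n)$: the truncation level inside $\mathcal{S}_k(\tb_n)$ is calibrated so that Lemma \ref{3} holds for every $\rho>0$, but this forces $c_n\asymp(N(\tb_n)/\log\log N(\tb_n))^{1/2}$, so the correction $c_n x_n$ is comparable to the variance bound unless $\rho$ is small. Thus $\rho$ is not a free nuisance parameter at this stage; it must be chosen small enough relative to $\varepsilon$ and $\delta$ to guarantee a positive gap $\beta>0$. In the outer proof of Proposition \ref{lilhalf} one sends $\rho\downarrow 0$ only after all truncation-error terms have been absorbed via Lemma \ref{3}.
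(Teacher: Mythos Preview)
Your proposal is correct and follows essentially the same route as the paper. Both arguments exploit the a.s.\ bound $|\tb_n^{S_{k-1}}\tilde\eta_k(\tb_n)|\leq 2\rho((1-\tb_n^{2\delta})\log\log(1/(1-\tb_n^{2\delta})))^{-1/2}$, control the predictable quadratic variation via Theorem~\ref{main1}, and derive an exponential tail bound of the form $(\log N(\tb_n))^{-(1+\beta)}$ which is summable by property~(c) of $\mathbb{B}$; the only difference is packaging---you invoke Freedman's inequality as a black box, whereas the paper derives the supermartingale estimate $\me\tau_n\leq 1$ by hand via $e^x\leq 1+x+(x^2/2)e^{|x|}$---and both must choose $\rho$ small relative to $\varepsilon$ to secure a positive gap, exactly as you identify in your ``Main obstacle'' paragraph.
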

\begin{proof}
The proof below follows the path of the proof of Lemma 3.6 in \cite{Picco+Vares:1994}.

We start by showing that
\begin{equation}\label{aux30}
{\lim\sup}_{b\to 1-}(1-b^{2\delta})\sum_{k=1}^{N_{1,\,\delta,\,\theta}(b)}b^{2S_{k-1}}\me_{k-1}(\tilde \eta_k^2(b))\leq \delta\quad\text{a.s.}
\end{equation}
(recall that $\mu=1$ by convention). Indeed, $\me_{k-1}(\tilde \eta_k^2(b))\leq \me \eta_k^2=1$, whence $$\sum_{k=1}^{N_{1,\,\delta,\,\theta}(b)}b^{2S_{k-1}}\me_{k-1}(\tilde \eta_k^2(b))\leq \sum_{k\geq 0} b^{2S_k}\quad\text{a.s.}$$ By Theorem \ref{main1} with $\eta=1$ a.s., $$\lim_{b\to 1-}(1-b^{2\delta})\sum_{k\geq 0} b^{2S_k}=\delta\quad\text{a.s.}$$ which entails \eqref{aux30}.

For $n\in\mn$, put $$t_n:=\Big(\frac{(2+\varepsilon)\log\log N(\tb_n)}{N(\tb_n)}\Big)^{1/2}$$ and define the event $$\mathcal{B}_n:=\Big\{\sum_{k=1}^{N_{1,\,\delta,\,\theta}(\tb_n)}\tb_n^{S_{k-1}}\tilde \eta_k(\tb_n)>\big((2+\varepsilon)N(\tb_n)\log\log N(\tb_n)\big)^{1/2}\Big\}.$$ Equivalently,
\begin{multline*}
\mathcal{B}_n:=\Big\{t_n\sum_{k=1}^{N_{1,\,\delta,\,\theta}(\tb_n)}\tb_n^{S_{k-1}}\tilde \eta_k(\tb_n)-(t_n^2/2) e^{4\rho(1+\varepsilon)}\sum_{k=1}^{N_{1,\,\delta,\,\theta}(\tb_n)}\tb_n^{2S_{k-1}}\me_{k-1}(\tilde \eta^2_k(\tb_n))\\> t_n^2 N(\tb_n)\Big(1-(e^{4\rho(1+\varepsilon)}/(2N(\tb_n)))\sum_{k=1}^{N_{1,\,\delta,\,\theta}(\tb_n)}\tb_n^{2S_{k-1}}\me_{k-1}(\tilde \eta^2_k(\tb_n))\Big)\Big\}.
\end{multline*}
In view of \eqref{aux30}, given $\beta>0$,
\begin{multline*}
\mathcal{B}_n\subseteq \mathcal{A}_n:=\Big\{t_n\sum_{k=1}^{N_{1,\,\delta,\,\theta}(\tb_n)}\tb_n^{S_{k-1}}\tilde \eta_k(\tb_n)-(t_n^2/2) e^{4\rho(1+\varepsilon)}\sum_{k=1}^{N_{1,\,\delta,\,\theta}(\tb_n)}\tb_n^{2S_{k-1}}\me_{k-1}(\tilde \eta^2_k(\tb_n))\\>t_n^2 N(\tb_n)(1-(e^{4\rho(1+\varepsilon)}/2)(\delta+\beta))\Big\}
\end{multline*}
for large enough $n$. Thus, by the Borel-Cantelli lemma, Lemma \ref{4} % Claim 4
follows if we can check that
\begin{equation}\label{aux32}
\sum_{n\geq 1}\mmp(\mathcal{A}_n)<\infty.
\end{equation}

As a preparation for this matter, we intend to show that
\begin{equation}\label{aux31}
\me\tau_n=:\me\exp\Big(t_n\sum_{k=1}^{N_{1,\,\delta,\,\theta}(\tb_n)}\tb_n^{S_{k-1}}\tilde \eta_k(\tb_n)-(t_n^2/2) e^{4\rho(1+\varepsilon)}\sum_{k=1}^{N_{1,\,\delta,\,\theta}(\tb_n)}\tb_n^{2S_{k-1}}\me_{k-1}(\tilde \eta^2_k(\tb_n))\Big)\leq 1.
\end{equation}
Using $e^x\leq 1+x+(x^2/2)e^{|x|}$ for $x\in\mr$ and $\me_{k-1}\tilde \eta_k(\tb_n)=0$ we infer $$\me_{k-1}\exp(t_n \tb_n^{S_{k-1}}\tilde \eta_k(\tb_n))\leq 1+(t_n^2/2)\tb_n^{2S_{k-1}}\me_{k-1}(\tilde \eta^2_k(\tb_n)\exp(t_n\tb_n^{S_{k-1}}|\tilde \eta_k(\tb_n)|))\quad\text{a.s.}$$ Further, $$t_n \tb_n^{S_{k-1}}|\tilde \eta_k(\tb_n)|\leq 2\rho (2+\varepsilon)^{1/2}\leq 4\rho(1+\varepsilon)\quad\text{a.s.}$$ This in combination with $e^x\geq 1+x$ for $x\geq 0$ yields  $$\me_{k-1}\big(\exp(t_n \tb_n^{S_{k-1}}\tilde \eta_k(\tb_n))\big)\exp(-(t_n^2/2)e^{4\rho(1+\varepsilon)}\tb_n^{2S_{k-1}}\me_{k-1}(\tilde \eta^2_k(\tb_n))\leq 1\quad\text{a.s.}$$ Inequality \eqref{aux31} is a consequence of this and the tower property of conditional expectations:
\begin{multline*}
\me_{N_{1,\,\delta,\,\theta}(\tb_n)-1}\tau_n\\=\exp\Big(t_n\sum_{k=1}^{N_{1,\,\delta,\,\theta}(\tb_n)-1}\tb_n^{S_{k-1}}\tilde \eta_k(\tb_n)-(t_n^2/2) e^{4\rho(1+\varepsilon)}\sum_{k=1}^{N_{1,\,\delta,\,\theta}(\tb_n)-1}\tb_n^{2S_{k-1}}\me_{k-1}(\tilde \eta^2_k(\tb_n))\Big)\\\times \me_{N_{1,\,\delta,\,\theta}(\tb_n)-1}\big(\exp(t_n \tb_n^{S_{k-1}}\tilde \eta_{N_{1,\,\delta,\,\theta}(\tb_n)}(\tb_n))\big)\exp(-(t_n^2/2)e^{4\rho(1+\varepsilon)}\tb_n^{2S_{k-1}}\me_{N_{1,\,\delta,\,\theta}(\tb_n)-1}(\tilde \eta^2_{N_{1,\,\delta,\,\theta}(\tb_n)}(\tb_n))\\\leq \exp\Big(t_n\sum_{k=1}^{N_{1,\,\delta,\,\theta}(\tb_n)-1}\tb_n^{S_{k-1}}\tilde \eta_k(\tb_n)-(t_n^2/2) e^{4\rho(1+\varepsilon)}\sum_{k=1}^{N_{1,\,\delta,\,\theta}(\tb_n)-1}\tb_n^{2S_{k-1}}\me_{k-1}(\tilde \eta^2_k(\tb_n))\Big)\quad\text{a.s.}
\end{multline*}
Further,
\begin{multline*}
\me_{N_{1,\,\delta,\,\theta}(\tb_n)-2}\tau_n =\me_{N_{1,\,\delta,\,\theta}(\tb_n)-2}(\me_{N_{1,\,\delta,\,\theta}(\tb_n)-1}\tau_n)\leq \exp\Big(t_n\sum_{k=1}^{N_{1,\,\delta,\,\theta}(\tb_n)-2}\tb_n^{S_{k-1}}\tilde \eta_k(\tb_n)\\-(t_n^2/2) e^{4\rho(1+\varepsilon)}\sum_{k=1}^{N_{1,\,\delta,\,\theta}(\tb_n)-2}\tb_n^{2S_{k-1}}\me_{k-1}(\tilde \eta^2_k(\tb_n))\Big)\quad\text{a.s.}
\end{multline*}
Repeating this argument $N_{1,\,\delta,\,\theta}(\tb_n)$ times we arrive at \eqref{aux31}.

We are ready to prove \eqref{aux32}. By Markov's inequality and \eqref{aux31}, $$\mmp(\mathcal{A}_n)\leq \exp\big(-t_n^2 N(\tb_n)(1-(e^{4\rho(1+\varepsilon)}/2)(\delta+\beta))\big)\me\tau_n\leq
\exp\big(-t_n^2 N(\tb_n)(1-(e^{4\rho(1+\varepsilon)}/2)(\delta+\theta))\big).$$ Given small enough $\varepsilon>0$, and $\delta\in (0,1)$ and $\beta>0$ satisfying $\delta+\beta\in (0,1)$ we can find $\rho>0$ such that $(2+\varepsilon)(1-(e^{4\rho(1+\varepsilon)/2})(\delta+\beta))>1$. This together with the property (c) of $\mathbb{B}$ ensures $$\sum_{n\geq 1}\exp(-t_n^2 N(\tb_n)((1-(e^{4\rho(1+\varepsilon)}/2)(\delta+\theta))))<\infty,$$ and \eqref{aux32} follows. The proof of Lemma \ref{4} is complete.
\end{proof}
\begin{lemma}\label{5}
Let $(\tb_n)_{n\in\mn}\in \mathbb{B}$. Then $$\lim_{n\to\infty}\sup_{b\in [\tb_n,\, \tb_{n+1}]}\Big|f(b)\sum_{k=1}^{N_{1,\,\delta,\,\theta}(b)}b^{S_{k-1}}\eta_k-f(\tb_n)\sum_{k=1}^{N_{1,\,\delta,\,\theta}(\tb_n)}\tb_n^{S_{k-1}}\eta_k \Big|=0\quad\text{{\rm a.s.}}$$
\end{lemma}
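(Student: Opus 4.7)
The plan is to set $g(b):=f(b)\sum_{k=1}^{N_{1,\,\delta,\,\theta}(b)}b^{S_{k-1}}\eta_k$ and decompose
\begin{multline*}
g(b)-g(\tb_n)=(f(b)-f(\tb_n))\sum_{k=1}^{N_{1,\,\delta,\,\theta}(\tb_n)}\tb_n^{S_{k-1}}\eta_k\\+f(b)\sum_{k=1}^{N_{1,\,\delta,\,\theta}(\tb_n)}(b^{S_{k-1}}-\tb_n^{S_{k-1}})\eta_k+f(b)\sum_{k=N_{1,\,\delta,\,\theta}(\tb_n)+1}^{N_{1,\,\delta,\,\theta}(b)}b^{S_{k-1}}\eta_k,
\end{multline*}
isolating the three $b$-dependencies (normalization, exponent base, upper summation index), and then bound the supremum over $b\in[\tb_n,\tb_{n+1}]$ of each piece separately, a.s.

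For the normalization piece, property (a) of $\mathbb{B}$ and a short calculus check give $\sup_{b\in[\tb_n,\tb_{n+1}]}|f(b)/f(\tb_n)-1|\to 0$; the factor $f(\tb_n)|\sum_{k=1}^{N_{1,\,\delta,\,\theta}(\tb_n)}\tb_n^{S_{k-1}}\eta_k|$ is eventually bounded a.s.\ by $((2+\varepsilon)/2)^{1/2}+o(1)$, obtained by splitting $\eta_k=\tilde\eta_k(\tb_n)+\eta_k\1_{\mathcal{S}_k(\tb_n)}+\me_{k-1}(\eta_k\1_{\mathcal{S}_k^c(\tb_n)})$ and applying Lemma 3 (with its sign-flip for $-\eta$) to absorb the last two terms into $o(1)$, while Lemma 4 and its symmetric counterpart bound the centered martingale $f(\tb_n)|\sum\tb_n^{S_{k-1}}\tilde\eta_k(\tb_n)|$.

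For the upper-index piece, the mean-value theorem applied to the function defining $N_{1,\,\delta,\,\theta}$ combined with property (b) gives $N_{1,\,\delta,\,\theta}(\tb_{n+1})-N_{1,\,\delta,\,\theta}(\tb_n)=o(((1-\tb_n)\sqrt{\log\log(1/(1-\tb_n))})^{-1})$, and a second-moment computation via $\phi(z):=\me z^\xi$ yields $\me(\sum_{k=N_{1,\,\delta,\,\theta}(\tb_n)+1}^{M}b^{S_{k-1}}\eta_k)^2\le{\tt s}^2(N_{1,\,\delta,\,\theta}(\tb_{n+1})-N_{1,\,\delta,\,\theta}(\tb_n))\phi(\tb_{n+1}^2)^{N_{1,\,\delta,\,\theta}(\tb_n)}$ uniformly in $M\le N_{1,\,\delta,\,\theta}(\tb_{n+1})$ and $b\le\tb_{n+1}$, with $\phi(\tb_{n+1}^2)^{N_{1,\,\delta,\,\theta}(\tb_n)}=O((\log(1/(1-\tb_n)))^{-(1+\theta)/\delta})$; choosing $\delta$ small and multiplying by $f(b)^2\sim(1-\tb_n)/(2\log\log(1/(1-\tb_n)))$, Doob's maximal inequality in $M$ together with Borel--Cantelli via property (c) closes this piece.

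For the base-change piece, Taylor expand $b^{S_{k-1}}=\tb_n^{S_{k-1}}+\tb_n^{S_{k-1}}S_{k-1}\log(b/\tb_n)+R_k$; SLLN with property (b) gives $|S_{k-1}\log(b/\tb_n)|=o(1/\sqrt{\log\log(1/(1-\tb_n))})$ uniformly for $k\le N_{1,\,\delta,\,\theta}(\tb_n)$ and $b\in[\tb_n,\tb_{n+1}]$, which makes the remainder contribution negligible. For the leading martingale $\log(b/\tb_n)\sum\tb_n^{S_{k-1}}S_{k-1}\eta_k$, a pathwise SLLN estimate gives the quadratic-variation bound $\sum\tb_n^{2S_{k-1}}S_{k-1}^2=O(1/(1-\tb_n)^3)$ a.s.\ (using only $\me\xi<\infty$), and an exponential-martingale argument modelled on the proof of Lemma 4 (with a fresh truncation of $\eta_k$ at the level $\rho b^{-S_{k-1}}((1-b^{2\delta})\log\log(1/(1-b^{2\delta})))^{-1/2}$) produces $\mmp(\sup_{b\in[\tb_n,\tb_{n+1}]}|f(b)\cdot(\text{base-change piece})|>\varepsilon)\le\exp(-c(\log\log(1/(1-\tb_n)))^4)$, summable along any $(\tb_n)\in\mathbb{B}$ by property (c); the supremum over $b$ is reduced to a countable grid via the Lipschitz-in-$b$ structure of the leading piece. \textbf{Main obstacle:} this base-change piece is the technically most delicate, requiring the pathwise quadratic-variation estimate (pathwise because $\me\xi^2$ is not assumed finite), an exponential-martingale concentration inequality with truncation, and a uniform-in-$b$ reduction, with property (b) calibrated precisely so that the resulting probability bound is summable under property (c).
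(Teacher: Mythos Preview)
Your decomposition into three pieces is reasonable, but the treatment of the upper-index piece has a genuine gap. You compute
\[
\me\Big(\sum_{k=N_{1,\,\delta,\,\theta}(\tb_n)+1}^{M}b^{S_{k-1}}\eta_k\Big)^2={\tt s}^2\sum_{k}\me b^{2S_{k-1}}={\tt s}^2\sum_k\phi(b^2)^{k-1}
\]
with $\phi(z)=\me z^\xi$. But $\phi(b^2)=\me b^{2\xi}$ need not be finite: the hypotheses of Theorem~\ref{main} give only $\me\xi\in(0,\infty)$, which controls $\me|\xi|$ but not $\me b^{-2\xi^-}$. For instance, if $\mmp\{\xi=-m\}=c\,m^{-3}$ for large $m$ (and the rest of the mass is placed so that $\me\xi>0$), then $\me b^{2\xi}=\infty$ for every $b\in(0,1)$. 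The paper explicitly flags this point elsewhere (``the martingale is not necessarily integrable, for the situation that $\me b^\xi=\infty$ is not excluded''). So the Doob/Borel--Cantelli step for the upper-index piece cannot be carried out as written, and there is no simple repair via moments. A secondary concern is that Doob's inequality handles the supremum in $M$ but not the simultaneous supremum in $b$; since the sign of $\eta_k$ varies you cannot simply push $\sup_b$ inside.

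The paper sidesteps all moment issues for $b^{S_k}$ by working \emph{pathwise} throughout: it uses summation by parts to rewrite every sum $\sum b^{S_{k-1}}\eta_k$ in terms of the partial sums $T_k=\eta_1+\cdots+\eta_k$ and the increments $b^{S_{k-1}}-b^{S_k}$, and then controls everything with the law of the iterated logarithm $\sup_{k\le\ell}|T_k|=O((\ell\log\log\ell)^{1/2})$ together with SLLN-based bounds such as $|b^{S_{k-1}}-b^{S_k}|\le{\rm const}\,|\log b|\,b^{\delta k}|\xi_k|$ (valid for large $k$). Both $I_n(b)$ (your normalization plus base-change pieces combined) and $J_n(b)$ (your upper-index piece) are handled this way. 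This is more elementary than your exponential-martingale machinery and, crucially, requires no integrability of $b^{S_k}$. For the upper-index piece specifically, Abel summation plus the LIL for $T_k$ plus Lemma~\ref{stronglaw} give the result directly; you may want to try that route instead.
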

\begin{proof}
Throughout the proof we tacitly assume that the equalities and inequalities hold a.s. We start by writing, for $b\in [\tb_n, \tb_{n+1}]$,
\begin{multline*}
f(b)\sum_{k=1}^{N_{1,\,\delta,\,\theta}(b)}b^{S_{k-1}}\eta_k-f(\tb_n)\sum_{k=1}^{N_{1,\,\delta,\,\theta}(\tb_n)}\tb_n^{S_{k-1}}\eta_k=
\sum_{k=1}^{N_{1,\,\delta,\,\theta}(\tb_n)}\big(f(b)b^{S_{k-1}}-f(\tb_n)\tb_n^{S_{k-1}}\big)\eta_k\\+f(b)\sum_{k=N_{1,\,\delta,\,\theta}(\tb_n)+1}^{N_{1,\,\delta,\,\theta}(b)}b^{S_{k-1}}\eta_k=:I_n(b)+J_n(b).
\end{multline*}
Summation by parts yields
\begin{multline*}
I_n(b)=\big(f(b)b^{S_{N_{1,\,\delta,\,\theta}(\tb_n)-1}}-f(\tb_n)\tb_n^{S_{N_{1,\,\delta,\,\theta}(\tb_n)-1}}\big)T_{N_{1,\,\delta,\,\theta}(\tb_n)}\\+\sum_{k=1}^{N_{1,\,\delta,\,\theta}(\tb_n)-1}\big(f(b)(b^{S_{k-1}}-b^{S_k})-
f(\tb_n)(\tb_n^{S_{k-1}}-\tb_n^{S_k})\big)T_k=:
I_{n,1}(b)+I_{n,2}(b),
\end{multline*}
where $T_k=\eta_1+\ldots+\eta_k$ for $k\in\mn$. For large enough $n$ for which $S_{N_{1,\,\delta,\,\theta}(\tb_n)-1}\geq \delta (N_{1,\,\delta,\,\theta}(\tb_n)-1)$
a.s. (this is secured by the strong law of large numbers) and, given $\varepsilon>0$,
\begin{equation}\label{aux21}
\delta|\log \tb_{n+1}|/(1-\tb_n^{2\delta})\geq 1/2-\varepsilon
\end{equation} 
(this is ensured by the property (a) of $\mathbb{B}$),
\begin{eqnarray*}
&&\sup_{b\in [\tb_n,\, \tb_{n+1}]}|I_{n,1}(b)|\leq 2f(\tb_n)\tb_{n+1}^{\delta (N_{1,\,\delta,\,\theta}(\tb_n)-1)} |T_{N_{1,\,\delta,\,\theta}(\tb_n)}|\notag\\&\leq& 2\tb_1^{-2\delta}f(\tb_n)(\log(1/(1-\tb_n^{2\delta})))^{-(1/2-\varepsilon)(1+\theta)}O\big((N_{1,\,\delta,\,\theta}(\tb_n)\log\log N_{1,\,\delta,\,\theta}(\tb_n))^{1/2}\big)\\&=&O\Big(\frac{(\log\log (1/(1-\tb_n)))^{1/2}}{(\log (1/(1-\tb_n)))^{(1/2-\varepsilon)(1+\theta)}}\Big)~\to~0\quad\text{a.s.\ as}~~n\to\infty.%\notag
\end{eqnarray*}
having utilized \eqref{lilweak} for the inequality. We are now passing to the analysis of $I_{n,2}(b)$. By the strong law of large numbers, with the same $\delta\in (0,1)$ there exists an a.s.\ finite $\tau$ such that $\max(\delta k, 1)\leq S_k\leq (2-\delta)k$ for all $k\geq \tau+1$. Since, for $b\in [\tb_n,\, \tb_{n+1}]$, $$\Big|\sum_{k=1}^{\tau}f(b)(b^{S_{k-1}}-b^{S_k})T_k\Big|\leq f(b) \sum_{k=1}^{\tau}(b^{S_{k-1}}+b^{S_k})|T_k|\leq f(\tb_n)\sum_{k=1}^{\tau}(\tb_n^{S_{k-1}}+\tb_n^{S_k})|T_k| %\quad\text{a.s.}
$$ we infer
$$\lim_{n\to\infty}\sup_{b\in [\tb_n\,\tb_{n+1}]}\Big|\sum_{k=1}^{\tau}f(b)\big(b^{S_{k-1}}-b^{S_k}\big)T_k\Big|=0\quad\text{a.s.}$$ We need some preparation to treat the remaining part of the sum.
Using the fact that when $\xi_k\geq 0$ the function $b\mapsto f(b)(1-b^{\xi_k})$ is nonincreasing for $b<1$ close to $1$ we obtain on the event $\{\xi_k\geq 0, \tau\leq k-1\}$, for $b\in [\tb_n,\,\tb_{n+1}]$ and large $n\in\mn$,
\begin{multline*}
f(\tb_n)(\tb_n^{S_{k-1}}-\tb_{n+1}^{S_{k-1}})(1-\tb_n^{\xi_k})\leq f(\tb_n)\big(\tb_n^{S_{k-1}}-\tb_n^{S_k}\big)-f(b)\big(b^{S_{k-1}}-b^{S_k}\big)\\\leq \tb_n^{S_{k-1}}\big(f(\tb_n)(1-\tb_n^{\xi_k})-f(\tb_{n+1})(1-\tb_{n+1}^{\xi_k})\big). %\quad\text{a.s.}
\end{multline*}
Combining this with a similar inequality on the event $\{\xi_k<0, \tau\leq k-1\}$ we arrive at
\begin{multline*}
\big|f(\tb_n)\big(\tb_n^{S_{k-1}}-\tb_n^{S_k}\big)-f(b)\big(b^{S_{k-1}}-b^{S_k}\big)\big|\leq f(\tb_n)\big(\tb_{n+1}^{S_{k-1}}-\tb_n^{S_{k-1}}\big)|1-\tb_n^{\xi_k}|\\+\tb_n^{S_{k-1}}\big(f(\tb_n)|1-\tb_n^{\xi_k}|-f(\tb_{n+1})|1-\tb_{n+1}^{\xi_k}|\big)%\quad\text{a.s.}
\end{multline*}
for $b$ and $n$ as above. Thus, for $b\in [\tb_n,\,\tb_{n+1}]$ and large $n\in\mn$,
\begin{eqnarray*}
&&\Big|\sum_{k=\tau+1}^{N_{1,\,\delta,\,\theta}(\tb_n)-1}\big(f(b)(b^{S_{k-1}}-b^{S_k})-
f(\tb_n)(\tb_n^{S_{k-1}}-\tb_n^{S_k})\big)T_k\Big|\\&\leq& \sum_{k=\tau+1}^{N_{1,\,\delta,\,\theta}(\tb_n)-1}\big|f(b)(b^{S_{k-1}}-b^{S_k})-
f(\tb_n)(\tb_n^{S_{k-1}}-\tb_n^{S_k})\big||T_k|\\&\leq& f(\tb_n)\sum_{k=\tau+1}^{N_{1,\,\delta,\,\theta}(\tb_n)-1}\big(\tb_{n+1}^{S_{k-1}}-\tb_n^{S_{k-1}}\big)|1-\tb_n^{\xi_k}||T_k|\\&+&
\sum_{k=\tau+1}^{N_{1,\,\delta,\,\theta}(\tb_n)-1} \tb_n^{S_{k-1}}\big(f(\tb_n)|1-\tb_n^{\xi_k}|-f(\tb_{n+1})|1-\tb_{n+1}^{\xi_k}|\big)|T_k|=:I_{n,21}(b)+I_{n,22}(b). %\quad\text{a.s.}
\end{eqnarray*}
For all $k\in\mn$ and all $n\in\mn$,
\begin{equation}\label{aux567}
|1-\tb_n^{\xi_k}|\leq |\log \tb_n|(\xi_k^++\xi_k^-\tb_n^{\xi_k}),
\end{equation}
where, as usual, $x^+=\max (x,0)$ and $x^-=\max (-x, 0)$ for $x\in\mr$. For $k\geq \tau+1$ and $n\in\mn$, by the mean value theorem for differentiable functions,
\begin{equation}\label{aux569}
\tb_{n+1}^{S_{k-1}}-\tb_n^{S_{k-1}}\leq S_{k-1}\tb_{n+1}^{S_{k-1}-1}(\tb_{n+1}-\tb_n)\leq \tb_1^{-1}S_{k-1}\tb_{n+1}^{S_{k-1}}(\tb_{n+1}-\tb_n)
\end{equation}
and thereupon
$$(\tb_{n+1}^{S_{k-1}}-\tb_n^{S_{k-1}})|1-\tb_n^{\xi_k}|\leq  (2-\delta) \tb_1^{-1}(\tb_{n+1}-\tb_n)|\log \tb_n|k\big(\tb_{n+1}^{S_{k-1}}\xi_k^++(\tb_{n+1}/\tb_n)^{S_{k-1}}\tb_n^{S_k}\xi_k^-\big).$$ Thus,
\begin{eqnarray*}
&&(2-\delta)^{-1}\tb_1 \sum_{k=\tau+1}^{N_{1,\,\delta,\,\theta}(\tb_n)-1}\big(\tb_{n+1}^{S_{k-1}}-\tb_n^{S_{k-1}}\big)\big|1-\tb_n^{\xi_k}\big| |T_k|\\&\leq& (\tb_{n+1}-\tb_n)|\log \tb_n|  \sum_{k=1}^{N_{1,\,\delta,\,\theta}(\tb_n)}k|T_k| \big(\tb_{n+1}^{\delta(k-1)}\xi_k^++(\tb_{n+1}/\tb_n)^{(2-\delta)k}\tb_n^{\delta k}\xi_k^-\big)\\&\leq& (\tb_{n+1}-\tb_n)|\log \tb_n|N_{1,\,\delta,\,\theta}(\tb_n)(\sup_{1\leq k\leq N_{1,\,\delta,\,\theta}(\tb_n)}\,|T_k|)\Big(\sum_{k\geq 1}\tb_{n+1}^{\delta(k-1)}\xi_k^+\\&+& (\tb_{n+1}/\tb_n)^{(2-\delta)N_{1,\,\delta,\,\theta}(\tb_n)} \sum_{k\geq 1}\tb_n^{\delta k}\xi_k^-\Big).
\end{eqnarray*}
By Theorem \ref{main1}, as $n\to\infty$,
\begin{equation}\label{aux568}
\sum_{k\geq 1}\tb_{n+1}^{\delta(k-1)}\xi_k^+~\sim~ (1-\tb_{n+1}^\delta)^{-1}\me\xi^+\quad\text{and}\quad \sum_{k\geq 1}\tb_n^{\delta k}\xi_k^-~\sim~ (1-\tb_n^\delta)^{-1}\me \xi^-\quad\text{a.s.}
\end{equation}
Using \eqref{lilweak} % with $\tb_n$ replacing $\tb_{n+1}$
in combination with the property (a) of $\mathbb{B}$ for the first equality and the property (b) of $\mathbb{B}$ for the second we infer
\begin{multline*}
f(\tb_n)(\tb_{n+1}-\tb_n)|\log \tb_n|N_{1,\,\delta,\,\theta}(\tb_n)(\sup_{1\leq k\leq N_{1,\,\delta,\,\theta}(\tb_n)}\,|T_k|) \sum_{k\geq 1}\tb_{n+1}^{\delta(k-1)}\xi_k^+\\=O\Big(\frac{\tb_{n+1}-\tb_n}{1-\tb_n}\Big(\log\log \frac{1}{1-\tb_n}\Big)^{3/2}\Big)=o(1)\quad\text{a.s.~as}~~n\to\infty.
\end{multline*}
Invoking once again the property (b) of $\mathbb{B}$ we obtain $\lim_{n\to\infty}\log (\tb_{n+1}/\tb_n)N_{1,\,\delta,\,\theta}(\tb_n)=0$, whence $\lim_{n\to\infty}(\tb_{n+1}/\tb_n)^{(2-\delta)N_{1,\,\delta,\,\theta}(\tb_n)}=1$. With this at hand we can argue as before to conclude that a.s.
\begin{equation*}
\lim_{n\to\infty} f(\tb_n)(\tb_{n+1}-\tb_n)|\log \tb_n|N_{1,\,\delta,\,\theta}(\tb_n)(\tb_{n+1}/\tb_n)^{(2-\delta)N_{1,\,\delta,\,\theta}(\tb_n)}(\sup_{1\leq k\leq N_{1,\,\delta,\,\theta}(\tb_n)}\,|T_k|) \sum_{k\geq 1}\tb_n^{\delta k}\xi_k^-=0. %\quad\text{a.s.}
\end{equation*}
Thus, we have proved that $\lim_{n\to\infty}I_{n,21}(b)=0$ a.s.

Further,
\begin{multline*} I_{n,22}(b)=(f(\tb_n)-f(\tb_{n+1}))\sum_{k=\tau+1}^{N_{1,\,\delta,\,\theta}(\tb_n)-1}\tb_n^{S_{k-1}}|1-\tb_n^{\xi_k}||T_k|\\+f(\tb_{n+1})\sum_{k=\tau+1}^{N_{1,\,\delta,\,\theta}(\tb_n)-1} \tb_n^{S_{k-1}}\big(\big|1-\tb_n^{\xi_k}\big|-\big|1-\tb_{n+1}^{\xi_k}\big|\big)|T_k|.
\end{multline*}
In view of \eqref{aux567},
\begin{multline*}
\sum_{k=\tau+1}^{N_{1,\,\delta,\,\theta}(\tb_n)-1}\tb_n^{S_{k-1}}|1-\tb_n^{\xi_k}||T_k|\leq |\log \tb_n|(\sup_{1\leq k\leq N_{1,\,\delta,\,\theta}(\tb_n)}\,|T_k|)
\sum_{k=\tau+1}^{N_{1,\,\delta,\,\theta}(\tb_n)-1}(\tb_n^{S_{k-1}}\xi_k^++\tb_n^{S_k}\xi_k^-)\\\leq |\log \tb_n|(\sup_{1\leq k\leq N_{1,\,\delta,\,\theta}(\tb_n)}\,|T_k|)
\sum_{k\geq 1}(\tb_n^{\delta(k-1)}\xi_k^++\tb_n^{\delta k}\xi_k^-).
\end{multline*}
Invoking \eqref{aux568} and \eqref{lilweak} in combination with $\lim_{n\to\infty}N_{1,\,\delta,\,\theta}(\tb_n)(f(\tb_n))^2=(1+\theta)(2\delta)^{-1}$ we conclude that
\begin{multline*}
(f(\tb_n)-f(\tb_{n+1}))\sum_{k=\tau+1}^{N_{1,\,\delta,\,\theta}(\tb_n)-1}\tb_n^{S_{k-1}}|1-\tb_n^{\xi_k}||T_k|=O\Big(\frac{f(\tb_n)-f(\tb_{n+1})}{f(\tb_n)}\Big(\log\log\frac{1}{1-\tb_n^2}\Big)^{1/2}\Big)\\
=o(1)\quad\text{a.s.~as}~~n\to\infty.
\end{multline*}
The last equality is justified as follows. Using subadditivity of $x\mapsto x^{1/2}$ on $[0,\infty)$ we obtain, for large $n$,
\begin{multline*}
\Big(\frac{f(\tb_n)-f(\tb_{n+1})}{f(\tb_n)}\Big)^2\log\log\frac{1}{1-\tb_n^2}\\ \leq \Big(\log\log\frac{1}{1-\tb_{n+1}^2}-\log\log\frac{1}{1-\tb_n^2}+\frac{\tb_{n+1}^2-\tb_n^2}{1-\tb_n^2}\log\log\frac{1}{1-\tb_n^2}\Big)\frac{\log\log\frac{1}{1-\tb_n^2}}{\log\log\frac{1}{1-\tb_{n+1}^2}}.
\end{multline*}
The property (a) of $\mathbb{B}$ entails
\begin{equation}\label{aux54}
\lim_{n\to\infty}\frac{\log(1/(1-\tb^2_{n+1})) }{\log(1/(1-\tb^2_n))}=1\quad\text{and}\quad \lim_{n\to\infty}\frac{\log\log(1/(1-\tb^2_{n+1})) }{\log\log (1/(1-\tb^2_n))}=1,
\end{equation}
and the first of these ensures $$\lim_{n\to\infty}\Big(\log\log\frac{1}{1-\tb_{n+1}^2}-\log\log\frac{1}{1-\tb_n^2}\Big)=0.$$ Finally, $$\lim_{n\to\infty} \frac{\tb_{n+1}^2-\tb_n^2}{1-\tb_n^2}\log\log\frac{1}{1-\tb_n^2}=0$$ is a consequence of the property (b) of $\mathbb{B}$. Thus, the equality that we wanted to justify does indeed hold.

For the analysis of the second piece of $I_{n,22}(b)$ we need an estimate similar to \eqref{aux569}: for $k,n\in\mn$, $$\big|1-\tb_n^{\xi_k}\big|-\big|1-\tb_{n+1}^{\xi_k}\big|=\big|\tb_{n+1}^{\xi_k}-\tb_n^{\xi_k}\big|\leq (\tb_{n+1}-\tb_n)\big(\xi_k^+ \tb_{n+1}^{\xi_k-1}+\xi_k^-\tb_n^{\xi_k-1}\big)\leq \tb_1^{-1} (\tb_{n+1}-\tb_n)\big(\xi_k^++\xi_k^- \tb_n^{\xi_k}\big). %\quad\text{a.s.}
$$ This implies that
\begin{multline*}
f(\tb_{n+1})\sum_{k=\tau+1}^{N_{1,\,\delta,\,\theta}(\tb_n)-1} \tb_n^{S_{k-1}}\big(\big|1-\tb_n^{\xi_k}\big|-\big|1-\tb_{n+1}^{\xi_k}\big|\big)|T_k|\\\leq \tb_1^{-1}f(\tb_n)(\tb_{n+1}-\tb_n)(\sup_{1\leq k\leq N_{1,\,\delta,\,\theta}(\tb_n)}\,|T_k|)\sum_{k\geq 1}(\tb_n^{\delta(k-1)}\xi_k^++\tb_n^{\delta k}\xi_k^-)\\=O\Big(\frac{\tb_{n+1}-\tb_n}{1-\tb_n}\Big(\log\log \frac{1}{1-\tb_n}\Big)^{1/2}\Big)=o(1)\quad\text{a.s.~as}~~n\to\infty.
\end{multline*}
Here, while the first equality is ensured by \eqref{lilweak} and \eqref{aux568}, the second is a consequence of the property (b) of $\mathbb{B}$. The proof of $\lim_{n\to\infty}I_n (b)=0$ a.s.\ is complete.

We proceed by analyzing $J_n(b)$: for $b\in [\tb_n,\,\tb_{n+1}]$,
\begin{multline*}
J_n(b)=f(b)\sum_{k=N_{1,\,\delta,\,\theta}(\tb_n)}^{N_{1,\,\delta,\,\theta}(b)-1}(b^{S_{k-1}}-b^{S_k})T_k+f(b)(b^{S_{N_{1,\,\delta,\,\theta}(b)-1}}T_{N_{1,\,\delta,\,\theta}(b)}
-b^{S_{N_{1,\,\delta,\,\theta}(\tb_n)-1}}T_{N_{1,\,\delta,\,\theta}(\tb_n)})\\=:
J_{n,1}(b)+J_{n,2}(b).
\end{multline*}
As before, appealing to the strong law of large numbers, we conclude that
\begin{multline*}
\sup_{b\in [\tb_n,\,\tb_{n+1}]}|J_{n,2}(b)|\leq 2f(\tb_n)\tb_{n+1}^{\delta(N_{1,\,\delta,\,\theta}(\tb_n)-1)}\sup_{k\leq N_{1,\,\delta,\,\theta}(\tb_{n+1})}\,|T_k|\\\leq 2\tb_1^{-2\delta}f(\tb_n)(\log(1/(1-\tb_n^{2\delta})))^{-(1/2-\varepsilon)(1+\theta)}O\big((N_{1,\,\delta,\,\theta}(\tb_{n+1})\log\log N_{1,\,\delta,\,\theta}(\tb_{n+1}))^{1/2}\big)\\=O\Big(\frac{(\log\log (1/(1-\tb_n)))^{1/2}}{(\log (1/(1-\tb_n)))^{(1/2-\varepsilon)(1+\theta)}}\Big)~\to~0\quad\text{a.s.\ as}~~n\to\infty.
\end{multline*}
We have used \eqref{lilweak} and \eqref{aux21} for the inequality and the property (a) of $\mathbb{B}$ and its consequences \eqref{aux54} for the equality. Invoking \eqref{aux53} we obtain, for large $n$ and appropriate constant $C>0$,
\begin{multline*}
\sup_{b\in [\tb_n,\,\tb_{n+1}]}|J_{n,1}(b)|\leq \sup_{b\in [\tb_n,\,\tb_{n+1}]} f(b)\sum_{k=N_{1,\,\delta,\,\theta}(\tb_n)}^{N_{1,\,\delta,\,\theta}(b)-1}|b^{S_{k-1}}-b^{S_k}||T_k|\\\leq Cf(\tb_n)|\log \tb_n|\sum_{k=N_{1,\,\delta,\,\theta}(\tb_n)}^{N_{1,\,\delta,\,\theta}(\tb_{n+1})-1}\tb_{n+1}^{\delta k}|\xi_k|(\sup_{k\leq N_{1,\,\delta,\,\theta}(\tb_{n+1})}\,|T_k|)\\ \leq Cf(\tb_n)|\log \tb_n|\sum_{k\geq N_{1,\,\delta,\,\theta}(\tb_n)} \tb_{n+1}^{\delta k}|\xi_k|O\big((N_{1,\,\delta,\,\theta}(\tb_{n+1})\log\log N_{1,\,\delta,\,\theta}(\tb_{n+1}))^{1/2}\big).
\end{multline*}
We use Lemma \ref{stronglaw} with $\eta=|\xi|$, $\lambda=\delta$, $M(b)=N_{1,\,\delta,\,\theta}(b)-1$, $x_n=\tb_n$ and $y_n=\tb_{n+1}$. Recalling the property (a) of $\mathbb{B}$ we conclude that  $\lim_{n\to\infty}N_{1,\,\delta,\,\theta}(\tb_n)(1-\tb_{n+1}^\delta)=\infty$. Hence, an application of that lemma yields $$\sum_{k\geq N_{1,\,\delta,\,\theta}(\tb_n)} \tb_{n+1}^{\delta k}|\xi_k|~\sim~ \me |\xi|\tb_{n+1}^{N_{1,\,\delta,\,\theta}(\tb_n)}(1-\tb^\delta_{n+1})^{-1}\quad\text{a.s.\ as}~~n\to\infty.$$ Using once again the property (a) of $\mathbb{B}$ and \eqref{aux54} in combination with the estimate for $\tb_{n+1}^{N_{1,\,\delta,\,\theta}(\tb_n)}$ which is implied by \eqref{aux21} we infer $$\sup_{b\in [\tb_n,\,\tb_{n+1}]}|J_{n,1}(b)|=O\Big(\frac{(\log\log (1/(1-\tb_n)))^{1/2}}{(\log (1/(1-\tb_n)))^{(1/2-\varepsilon)(1+\theta)}}\Big)~\to~0\quad\text{a.s.\ as}~~n\to\infty.$$ The proof of Lemma \ref{5} is complete.
\end{proof}

We are ready to prove Proposition \ref{lilhalf}.
\begin{proof}[Proof of Proposition \ref{lilhalf}]
We only prove \eqref{lil1}, for \eqref{lil2} is a consequence of \eqref{lil1}  with $-\eta_k$ replacing $\eta_k$.

By Lemmas \ref{1} and \ref{2} and \eqref{aux010}, \eqref{lil1} is equivalent to $${\lim\sup}_{b\to 1-}f(b)\sum_{k=1}^{N_{1,\,\delta,\,\theta}(b)}b^{S_{k-1}}\eta_k\1_{\mathcal{S}^c_k(b)}\leq 1\quad\text{a.s.}$$ The latter limit relation holds true by Lemma \ref{4} in combination with \eqref{aux011} and the fact that $\me_{k-1}(\eta_k\1_{\mathcal{S}^c_k(b)})=-\me_{k-1}(\eta_k\1_{\mathcal{S}_k(b)})$, and Lemma \ref{5}.
\end{proof}

\begin{assertion}\label{lilhalf2}
Under the assumptions of Theorem \ref{main},
\begin{equation}\label{lil11}
{\lim\sup}_{b\to 1-}\Big(\frac{1-b^2}{\log\log \frac{1}{1-b^2}}\Big)^{1/2}\sum_{k\geq 0}b^{S_k}\eta_{k+1}\geq (2{\tt s}^2 \mu^{-1})^{1/2}\quad\text{{\rm a.s.}}
\end{equation}
and
\begin{equation}\label{lil1121}
{\lim\inf}_{b\to 1-}\Big(\frac{1-b^2}{\log\log \frac{1}{1-b^2}}\Big)^{1/2}\sum_{k\geq 0}b^{S_k}\eta_{k+1}\leq -(2{\tt s}^2 \mu^{-1})^{1/2}\quad\text{{\rm a.s.}}
\end{equation}
%and
%\begin{equation}\label{lil2}
%{\lim\inf}_{b\to 1-}\Big(\frac{1-b^2}{\log\log \frac{1}{1-b^2}}\Big)^{1/2}\sum_{k\geq 0}b^{S_k}\eta_{k+1}\geq -({\tt s}^2 \mu^{-1})^{1/2}\quad\text{{\rm a.s.}}
%\end{equation}
\end{assertion}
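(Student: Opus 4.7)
Adopt the normalization $\mu={\tt s}^2=1$ used in the proof of Proposition \ref{lilhalf}, and note that \eqref{lil1121} follows from \eqref{lil11} on replacing $\eta_k$ by $-\eta_k$. Set $U_n:=f(b_n)\sum_{k\geq 1} b_n^{S_{k-1}}\eta_k$ for a sparse subsequence $(b_n)$ to be specified; the goal is to show that $U_n\geq 1-\varepsilon$ infinitely often a.s.

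Parametrize $(b_n)$ by $\lambda_n:=\log(1/(1-b_n^2))$ via a recursion of the form $\lambda_n=\lambda_{n-1}+r\log\lambda_{n-1}$ with $r>2$ (so $\lambda_n\sim rn\log n$), put $M_n:=N_2(b_n)$, and perform the block decomposition
\[
\sum_{k\geq 1} b_n^{S_{k-1}}\eta_k \ = \ A_n \ + \ b_n^{S_{M_{n-1}}}\,\tilde B_n^{\ast} \ + \ R_n,
\]
with $A_n:=\sum_{k=1}^{M_{n-1}} b_n^{S_{k-1}}\eta_k$, $\tilde B_n^{\ast}:=\sum_{k=M_{n-1}+1}^{M_n} b_n^{S_{k-1}-S_{M_{n-1}}}\eta_k$, and $R_n:=\sum_{k>M_n} b_n^{S_{k-1}}\eta_k$. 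Then $\tilde B_n^{\ast}$ is independent of $\mathcal{F}_{M_{n-1}}$, the family $(\tilde B_n^{\ast})_{n\geq 1}$ is mutually independent because the $\eta$-index ranges $(M_{n-1},M_n]$ are pairwise disjoint, and $f(b_n)R_n\to 0$ a.s.\ by \eqref{aux121}.

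The recursion is calibrated so that $M_{n-1}/N(b_n)=\lambda_{n-1}^{1-r}\to 0$, which forces $b_n^{S_{M_{n-1}}}=\exp(-S_{M_{n-1}}(1-b_n)+o(1))\to 1$ a.s.\ by the strong law. Working on the SLLN event $\{|S_k-k|\leq\delta k$ for $k$ large$\}$, one has the pathwise bound $\sum_{k=1}^{M_{n-1}} b_n^{2S_{k-1}}\asymp M_{n-1}=o(N(b_n))$; applying Chebyshev conditionally on $(\xi_k)$ and then the first Borel--Cantelli lemma (the choice $r>2$ ensures the resulting series converges) yields $f(b_n)A_n\to 0$ a.s. For $\tilde B_n^{\ast}$ the same SLLN event gives ${\rm Var}(\tilde B_n^{\ast})\sim N(b_n)$, and Lemma \ref{lem:aux}, applied with $M(b_n):=M_n-M_{n-1}\gg N(b_n)$, supplies $\tilde B_n^{\ast}/\sqrt{N(b_n)}\overset{{\rm d}}{\longrightarrow}{\rm Normal}(0,1)$. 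After truncating the $\eta_k$ at level $\rho(k/\log\log k)^{1/2}$ as in Lemma \ref{3} (the discarded mass being negligible by \eqref{aux012}) so that the summands of $\tilde B_n^{\ast}$ become bounded, a Berry--Esseen or Cram\'er-type refinement delivers the Gaussian lower bound
\[
\mmp\bigl(\tilde B_n^{\ast}>(1-\varepsilon)f(b_n)^{-1}\bigr) \ \asymp \ (\log N(b_n))^{-(1-\varepsilon)^2} = (rn\log n)^{-(1-\varepsilon)^2},
\]
which is the general term of a divergent series for every $\varepsilon>0$. Independence of the $\tilde B_n^{\ast}$ and the second Borel--Cantelli lemma now give $\tilde B_n^{\ast}>(1-\varepsilon)f(b_n)^{-1}$ i.o.; combining this with $b_n^{S_{M_{n-1}}}\to 1$, $f(b_n)A_n\to 0$, and $f(b_n)R_n\to 0$ yields $U_n\geq 1-2\varepsilon$ i.o., and letting $\varepsilon\downarrow 0$ proves \eqref{lil11}.

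The principal obstacle is the quantitative Gaussian lower-tail estimate for $\tilde B_n^{\ast}$ at deviations of order $\sqrt{\log\log N(b_n)}$: Lemma \ref{lem:aux} alone provides only convergence in law, which is too weak for second Borel--Cantelli at the LIL scale. The standard remedy is truncation of the $\eta_k$ combined with a Berry--Esseen bound or a Cram\'er-type refinement, following \cite{Bovier+Picco:1993,Picco+Vares:1994}. A secondary bookkeeping issue is ensuring that the SLLN approximations $S_{M_{n-1}}\approx M_{n-1}$ and $b_n^{2S_k}\approx b_n^{2k}$ underlying the variance calibrations are uniform over the pertinent ranges of $k$; this is arranged by restriction to the a.s.\ event $\{|S_k-k|\leq\delta k$ for $k\geq k_0\}$, mirroring the tightness argument in the proof of Theorem \ref{main2}.
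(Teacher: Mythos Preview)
Your overall architecture is sound and parallels the paper's: a three-piece decomposition into head, middle, and tail along a sparse subsequence, with the tail controlled by \eqref{aux121}, the head shown negligible, and the middle blocks independent across $n$ so that the second Borel--Cantelli lemma applies. The substantive difference is in how the lower bound on $\mmp\{\tilde B_n^\ast>(1-\varepsilon)/f(b_n)\}$ is obtained. You go the classical Kolmogorov route: truncate and invoke a Berry--Esseen or Cram\'er-type refinement to access the moderate-deviation regime $x_n\asymp\sqrt{\log\log N(b_n)}$. This is correct in principle, but carrying it out for the present martingale structure (random weights $b_n^{S_{k-1}-S_{M_{n-1}}}$) is not a one-liner; it is exactly the step you flag as the principal obstacle, and it remains only sketched. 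The paper sidesteps this entirely by a \emph{sub-block} device inside Lemma~\ref{aux200}: the middle block is further partitioned into $q_n\approx 2t^{-2}\log\log N(\mb_n)$ consecutive sub-blocks $Z_{1,n},\ldots,Z_{q_n,n}$, and the event $\{\sum_k Z_{k,n}>q_n t(1-\varepsilon)\}$ is bounded below by $\bigcap_k\{Z_{k,n}>t(1-\varepsilon)\}$. For each sub-block, Lemma~\ref{lem:aux} (the ordinary CLT) at the \emph{fixed} level $t(1-\varepsilon)$ gives $\mmp\{Z_{k,n}>t(1-\varepsilon)\}\to\bar\Phi(t(1-\varepsilon))$, and multiplying $q_n$ such factors yields a probability of order $(\log N(\mb_n))^{-A(t)}$ with $A(t)<1$ for large $t$; divergence of the series then follows from property~(c) of the class $\mathbb{B}^\ast$. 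Thus the paper trades your quantitative CLT refinement for a combinatorial trick that needs only weak convergence. Secondary differences: your subsequence $\lambda_n\sim rn\log n$ is simpler than the paper's factorial-type $(\mb_n)\in\mathbb{B}^\ast$, and your head cutoff $M_{n-1}=N_2(b_{n-1})$ (handled by Chebyshev plus first Borel--Cantelli) replaces the paper's smaller $N_n$ (handled in Lemma~\ref{11} via summation by parts and the LIL for $T_k$).
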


Recall the notation: for $b\in (0,1)$ close to $1$, $$N_2(b)=\Big\lfloor \frac{1}{1-b^2}\log\frac{1}{1-b^2}\Big\rfloor.$$ Denote by $\mathbb{B}^\ast$ the class of increasing sequences $(\mb_n)_{n\in\mn}$ of positive numbers satisfying the following properties:

\noindent (a) $\lim_{n\to\infty}\mb_n=1$ and $\lim_{n\to\infty}(1-\mb_n)\log n=0$; % $\lim_{n\to\infty}\frac{1-\tb_{n+1}}{1-\tb_n}=1$;

\noindent (b) for large $n$, $N_{n+1}\geq N_2(\mb_n)$, where $$N_n:=\Big\lfloor\frac{\log (1-1/\log n)}{2\log \mb_n}\Big\rfloor.$$     % $\lim_{n\to\infty} \frac{\tb_{n+1}-\tb_n}{1-\tb_n}\Big(\log\log \frac{1}{1-\tb_n}\Big)^{3/2}=0$;

%\noindent (c) $\lim_{n\to\infty}\frac{f(b_n)-f(b_{n+1})}{f(b_n)}\Big(\log\log\frac{1}{1-b_n}\Big)^{1/2}=0$;

\noindent (c) for all $a\in (0,1)$ and some $n_0\in\mn$, $\sum_{n\geq n_0}\Big(\log\Big(\frac{1}{1-\mb_n^2}\Big)\Big)^{-a}=\infty$.

It was shown in Section 3 of \cite{Bovier+Picco:1996} (see also pp.~180,181 and 184 in \cite{Bovier+Picco:1993}) that the sequence $(\mb_n)_{n\geq 3}$ given by $$\mb_n:=\exp\Big(-\frac{1}{n!\prod_{j=2}^n (\log j)^2\prod_{k=3}^n \log\log k}\Big)$$ belongs to the class  $\mathbb{B}^\ast$.

%For a sequence $(\mb_n)\in \mathbb{B}^\ast$, put $$N_n:=\Big\lfloor\frac{\log (1-1/\log n)}{2\log \mb_n}\Big\rfloor$$ and note that $\lim_{n\to\infty}N_n=\infty$ by the property (a) of $\mathbb{B}^\ast$.

As in the proof of Proposition \ref{lilhalf} we proceed via a sequence of lemmas.
\begin{lemma}\label{11}
Under the assumptions of Theorem \ref{main},
$$\lim_{n\to\infty} f(\mathfrak{b}_n)\sum_{k=1}^{N_n} \mb_n^{S_{k-1}}\eta_k=0\quad\text{{\rm a.s.}}$$
\end{lemma}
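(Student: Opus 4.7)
The plan is to follow the summation-by-parts pattern already used in Lemmas \ref{1} and \ref{2}. Write, with $T_k=\eta_1+\cdots+\eta_k$,
$$\sum_{k=1}^{N_n}\mb_n^{S_{k-1}}\eta_k \;=\; \sum_{k=1}^{N_n-1}\bigl(\mb_n^{S_{k-1}}-\mb_n^{S_k}\bigr)T_k \;+\; \mb_n^{S_{N_n-1}}T_{N_n}.$$
The goal is to show that each of the two pieces is $O\bigl((N_n\log\log N_n)^{1/2}\bigr)$ a.s., after which the lemma follows once we verify $f(\mb_n)(N_n\log\log N_n)^{1/2}\to 0$, which is where the defining properties of $\mathbb{B}^\ast$ enter.

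For the boundary term, the strong law of large numbers gives $S_{N_n-1}\sim N_n$ a.s., so $\mb_n^{S_{N_n-1}}\sim \mb_n^{N_n}$; since by the definition of $N_n$ one has $\mb_n^{2N_n}\to 1$, the factor $\mb_n^{S_{N_n-1}}$ is in fact a.s.\ bounded (indeed, tends to $1$). Combined with the classical LIL bound \eqref{lilweak}, i.e.\ $|T_{N_n}|=O((N_n\log\log N_n)^{1/2})$ a.s., this handles the boundary term. For the main sum, estimate \eqref{aux53} yields $|\mb_n^{S_{k-1}}-\mb_n^{S_k}|\leq \mb_n^{\delta(k-1)}|\log\mb_n||\xi_k|$ a.s.\ for $k$ past the (a.s.\ finite) SLLN threshold, so
$$\Bigl|\sum_{k=1}^{N_n-1}\bigl(\mb_n^{S_{k-1}}-\mb_n^{S_k}\bigr)T_k\Bigr|\;\leq\;\text{const}\,+\,|\log\mb_n|\,\Bigl(\sup_{k\leq N_n}|T_k|\Bigr)\sum_{k\geq 1}\mb_n^{\delta(k-1)}|\xi_k|.$$
Applying Lai's strong law \eqref{aux152} to the random power series with coefficients $|\xi_k|$ (which requires only $\me|\xi|<\infty$, a consequence of $\me\xi\in(0,\infty)$) gives $\sum_{k\geq 1}\mb_n^{\delta(k-1)}|\xi_k|\sim \me|\xi|/(\delta(1-\mb_n))$ a.s., and since $|\log\mb_n|\sim 1-\mb_n$ the product in front of $\sup_{k\leq N_n}|T_k|$ is $O(1)$. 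So the main sum is $O((N_n\log\log N_n)^{1/2})$ a.s.\ by another use of \eqref{lilweak}.

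It remains to see that $f(\mb_n)(N_n\log\log N_n)^{1/2}\to 0$. By the definition of $N_n$, $-2N_n\log\mb_n\sim 1/\log n$, hence $N_n\sim N(\mb_n)/\log n$, and so
$$f(\mb_n)^2\,N_n\log\log N_n \;\sim\; \frac{1}{2\log n}\cdot\frac{\log\log N_n}{\log\log N(\mb_n)}.$$
Property (a) of $\mathbb{B}^\ast$, namely $(1-\mb_n)\log n\to 0$, forces $\log N(\mb_n)=\log(1/(1-\mb_n^2))\gg \log\log n$, which gives $\log N_n\sim\log N(\mb_n)$ and consequently $\log\log N_n/\log\log N(\mb_n)\to 1$. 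Therefore the displayed quantity is $O(1/\log n)\to 0$, and the proof is complete.

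The only non-routine point is the final paragraph: property (a) of $\mathbb{B}^\ast$ is precisely what ensures the truncation $N_n$ is a small enough fraction of $N(\mb_n)$ that the LIL-size residue $(N_n\log\log N_n)^{1/2}$ is killed by the normalization $f(\mb_n)$. The remaining steps repeat the summation-by-parts bookkeeping of Lemmas \ref{1} and \ref{2} in a straightforward fashion, with no new technical ingredient.
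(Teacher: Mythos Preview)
Your proof is correct and follows the paper's approach exactly: summation by parts, the LIL bound \eqref{lilweak} for the boundary term and for $\sup_{k\le N_n}|T_k|$, the increment estimate \eqref{aux53} together with the strong law for $\sum_k \mb_n^{\delta k}|\xi_k|$, and finally the key asymptotic $f(\mb_n)(N_n\log\log N_n)^{1/2}\to 0$. One minor point on the last paragraph: property (a) alone gives only $N(\mb_n)\gg\log n$, hence $\log N(\mb_n)-\log\log n\to\infty$, which is weaker than the claim $\log N(\mb_n)\gg\log\log n$; but you do not need the stronger statement, since the trivial bound $\log\log N_n\le\log\log N(\mb_n)$ (from $N_n\le N(\mb_n)$ for large $n$) together with the factor $1/(2\log n)$ already forces the limit to zero --- this is precisely how the paper argues.
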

\begin{proof}
We start by noting that
\begin{equation}\label{aux00}
\lim_{n\to\infty}f(\mb_n) (N_n\log\log N_n)^{1/2}=0
\end{equation}
or, equivalently,
$$\lim_{n\to\infty}\frac{1-\mb_n^2}{\log\log (1/(1-\mb_n^2))}N_n\log\log N_n=0.$$ The latter is an immediate consequence of $(1-\mb_n^2)N_n\sim (\log n)^{-1}\to 0$ as $n\to\infty$ and $${\lim\sup}_{n\to\infty}\frac{\log\log N_n}{\log\log (1/(1-\mb_n^2))}\leq 1.$$

Formula \eqref{summat} with $\ell=N_n$ and $b=\mb_n$ reads $$\sum_{k=1}^{N_n} \mb_n^{S_{k-1}}\eta_k=\sum_{k=1}^{N_n-1}(\mb_n^{S_{k-1}}-\mb_n^{S_k})T_k +\mb_n^{S_{N_n-1}}T_{N_n},$$ where $T_k=\eta_1+\ldots+\eta_k$ for $k\in\mn$. Using $\lim_{n\to\infty}N(n)\log \mb_n=0$ in combination with the strong law of large numbers we infer
\begin{equation}\label{slln}
\lim_{n\to\infty}\mb_n^{S_{N_n-1}}=1\quad\text{and}\quad \lim_{n\to\infty}\mb_n^{S_{N_n}}=1\quad\text{a.s.}
\end{equation}
This together with the law of the iterated logarithm for standard random walks entails $${\lim\sup}_{n\to\infty}\frac{\mb_n^{S_{N_n-1}}T_{N_n}}{(N_n\log\log N_n)^{1/2}} =2^{1/2}\quad\text{a.s.}$$
Further, with the same $N$ as in \eqref{aux53} (we replace $\mu-\varepsilon$ with $\delta$),
$$\lim_{n\to\infty}\sum_{k=1}^N (\mb_n^{S_{k-1}}-\mb_n^{S_k})T_k=0\quad\text{a.s.}$$ According to \eqref{aux53}, for large enough $n$ and a constant $c>0$,
\begin{multline*}
\Big|\sum_{k=N+1}^{N_n-1}(\mb_n^{S_{k-1}}-\mb_n^{S_k})T_k\Big|\leq \sum_{k=1}^{N_n-1}|\mb_n^{S_{k-1}}-\mb_n^{S_k}||T_k|\leq c |\log \mb_n|\sum_{k\geq 1} \mb_n^{\delta k}|\xi_k|(\sup_{k\leq N_n}\,|T_k|)\\=O((N_n\log\log N_n)^{1/2})\quad\text{a.s.}
\end{multline*}
The last equality is a consequence of Theorem \ref{main1} (which gives $\lim_{n\to\infty}|\log \mb_n|\sum_{k\geq 1} \mb_n^{\delta k}|\xi_k|=\delta^{-1}\me |\xi|$ a.s.) and \eqref{lilweak}.
An appeal to \eqref{aux00} completes the proof of Lemma \ref{11}.
\end{proof}

\begin{lemma}\label{aux200}
Under the assumptions of Theorem \ref{main}, for all $\varepsilon\in (0,1)$,
\begin{equation}\label{bkimpo1}
\mmp\Big\{f(\mb_n)\sum_{k=N_n+1}^{N_2(\mb_n)} \mb_n^{S_{k-1}}\eta_k >1-\varepsilon\quad \text{{\rm i.o.}}\Big\}=1.
\end{equation}
\end{lemma}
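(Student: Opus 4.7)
Assume $\mu={\tt s}^2=1$ by rescaling. The plan is to apply the second Borel--Cantelli lemma to a sequence of independent events. Two obstacles arise: (i) the partial sums $W_n:=\sum_{k=N_n+1}^{N_2(\mb_n)}\mb_n^{S_{k-1}}\eta_k$ are not independent across $n$ because each $W_n$ depends on the full history $\xi_1,\ldots,\xi_{k-1}$ for $k>N_n$; (ii) Lemma \ref{lem:aux} yields only a qualitative CLT, whereas we shall need a quantitative Gaussian tail bound at a threshold that grows with $n$.

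To resolve (i), factorise $\mb_n^{S_{k-1}}=\mb_n^{S_{N_n}}\mb_n^{S_{k-1}-S_{N_n}}$ and work with
$$\tilde W_n:=\mb_n^{-S_{N_n}}W_n=\sum_{k=N_n+1}^{N_2(\mb_n)}\mb_n^{S_{k-1}-S_{N_n}}\eta_k.$$
The random variable $\tilde W_n$ is $\sigma((\xi_j,\eta_j):N_n<j\leq N_2(\mb_n))$-measurable, and property (b) of $\mathbb{B}^\ast$ makes these index blocks pairwise disjoint for sufficiently large $n$; hence $(\tilde W_n)$ is an independent sequence from some index on. By the i.i.d.\ property of $((\xi_k,\eta_k))$, one also has $\tilde W_n\stackrel{{\rm d}}{=}\sum_{j=0}^{L_n-1}\mb_n^{S_j}\eta_{j+1}$ with $L_n:=N_2(\mb_n)-N_n\to\infty$. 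Since \eqref{slln} gives $\mb_n^{S_{N_n}}\to 1$ a.s., in order to establish \eqref{bkimpo1} it suffices to prove that $f(\mb_n)\tilde W_n>1-\varepsilon/2$ infinitely often a.s., which by the Borel--Cantelli lemma for independent events reduces to
$$\sum_{n\geq 1}\mmp\{f(\mb_n)\tilde W_n>1-\varepsilon/2\}=\infty.$$

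Apply Lemma \ref{lem:aux} with $b=\mb_n$ and $M(b)=L_n-1$ to obtain $\sigma_n^{-1}\tilde W_n\Rightarrow\mathrm{Normal}(0,1)$ with $\sigma_n^2=(1-\mb_n^{2L_n})/(1-\mb_n^2)$. From the definitions of $N_n$ and $N_2$ one computes $\mb_n^{2N_n}\sim 1-1/\log n$ and $\mb_n^{2N_2(\mb_n)}\sim 1-\mb_n^2$, whence $\sigma_n^2\sim(1-\mb_n^2)^{-1}$ and $f(\mb_n)\sigma_n\sim(2\log\log(1/(1-\mb_n^2)))^{-1/2}\to 0$. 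Consequently the event of interest is asymptotically $\{\sigma_n^{-1}\tilde W_n>(1-\varepsilon/2)\sqrt{2\log\log(1/(1-\mb_n^2))}(1+o(1))\}$.

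To overcome obstacle (ii), the CLT of Lemma \ref{lem:aux} must be strengthened to a Berry--Esseen type bound, uniform in $n$, at the growing threshold $y_n\sim(1-\varepsilon/2)\sqrt{2\log\log(1/(1-\mb_n^2))}$. A natural route is to truncate $\eta_k$ at level of order $(\log\log(1/(1-\mb_n^2)))^{1/2}$ (the truncation error being handled by $\me\eta^2<\infty$ in the spirit of Lemma \ref{3}) and to apply the martingale Berry--Esseen inequality to the truncated sum within the filtration $(\mathcal F_k)$. Combined with the standard Gaussian tail bound $\mmp\{\mathrm{Normal}(0,1)>y\}\geq(2\pi)^{-1/2}y(1+y^2)^{-1}e^{-y^2/2}$, this delivers
$$\mmp\{f(\mb_n)\tilde W_n>1-\varepsilon/2\}\geq C(\log(1/(1-\mb_n^2)))^{-(1-\varepsilon/2)^2}(\log\log(1/(1-\mb_n^2)))^{-1/2}$$
for all sufficiently large $n$. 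Since $(1-\varepsilon/2)^2<1$, picking any $a\in((1-\varepsilon/2)^2,1)$ bounds the right-hand side below by $(\log(1/(1-\mb_n^2)))^{-a}$, and property (c) of $\mathbb{B}^\ast$ then forces divergence of the series. The principal technical hurdle is precisely this uniform-in-$n$ upgrade of Lemma \ref{lem:aux} to a Berry--Esseen bound sharp enough to extract the exponent $(1-\varepsilon/2)^2$ at logarithmic scale.
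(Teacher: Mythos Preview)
Your reduction to the independent family $\tilde W_n=\mb_n^{-S_{N_n}}W_n$ and the appeal to property (b) of $\mathbb{B}^\ast$ and the second Borel--Cantelli lemma are exactly what the paper does; this part is fine.

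The divergence part is where you diverge from the paper, and here there is a genuine gap. You propose to upgrade Lemma~\ref{lem:aux} to a Berry--Esseen bound sharp enough to control $\mmp\{\sigma_n^{-1}\tilde W_n>y_n\}$ at the growing threshold $y_n\sim(1-\varepsilon/2)\sqrt{2\log\log(1/(1-\mb_n^2))}$, under only $\me\eta^2<\infty$ and with the random conditional variances $\mb_n^{2S_{k-1}}$. You yourself flag this as ``the principal technical hurdle,'' and indeed it is not carried out: standard Berry--Esseen needs a third moment; martingale Berry--Esseen bounds (Bolthausen, Haeusler) would require control of both $\sum_k\me_{k-1}|\cdot|^3$ and the random fluctuation of the predictable variance $\sum_k \mb_n^{2S_{k-1}}$, and delivering the precise exponent $(1-\varepsilon/2)^2$ from this is not at all routine. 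As written, this step is a wish rather than an argument.

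The paper sidesteps the whole issue by the classical Kolmogorov blocking trick. It fixes a large parameter $t$, chops $\{0,\ldots,L_n-1\}$ into $q_n=2t^{-2}\lfloor\log\log(1/(1-\mb_n^2))\rfloor$ sub-blocks of asymptotically equal (deterministic) variance $\sigma_n^2/q_n$, and lower-bounds the event $\{\alpha_n\tilde W_n>1-\varepsilon\}$ by the intersection $\{Z_{k,n}>t(1-\varepsilon)\text{ for all }1\leq k\leq q_n\}$, with the $Z_{k,n}$ independent. Now each factor $\mmp\{Z_{k,n}>t(1-\varepsilon)\}$ is evaluated at a \emph{fixed} threshold $t(1-\varepsilon)$, so the purely qualitative CLT of Lemma~\ref{lem:aux} suffices to give $\mmp\{Z_{k,n}>t(1-\varepsilon)\}\to\mmp\{{\rm Normal}(0,1)>t(1-\varepsilon)\}$. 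Multiplying $q_n$ such factors produces the exponent $A(t)<1$ in $(\log(1/(1-\mb_n^2)))^{-A(t)}$ for $t$ large, and property~(c) finishes. The point is that blocking converts one moderate-deviation estimate into many fixed-level CLT estimates, which is exactly what Lemma~\ref{lem:aux} provides; your route would instead require proving a new moderate-deviation result.
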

\begin{proof}
Assume that we have already proved that, for all $\varepsilon_1\in (0,1)$,
\begin{equation}\label{bk}
\mmp\{\mathcal{C}_n(\varepsilon_1)~\text{{\rm i.o.}}\}=1,
\end{equation}
where
$$\mathcal{C}_n(\varepsilon_1):=\Big\{f(\mb_n)\mb_n^{-S_{N_n}}\sum_{k=N_n+1}^{N_2(\mb_n)} \mb_n^{S_{k-1}}\eta_k >1-\varepsilon_1\Big\},\quad n\in\mn.$$
%Using $\lim_{n\to\infty}N(n)\log \mb_n=0$ in combination with the strong law of large numbers we infer $$\lim_{n\to\infty}\mb_n^{S_{N_n}}=1\quad\text{a.s.}$$ Hence,
Setting, for each $\varepsilon_2\in (0,1)$, $\mathcal{D}_n(\varepsilon_2):=\{\mb_n^{S_{N_n}}>1-\varepsilon_2\}$ we conclude with the help of the second equality in \eqref{slln} that, for all $\varepsilon_2\in (0,1)$,  $\mmp\{\mathcal{D}_n(\varepsilon_2)~\text{eventually}\}=1$. This in combination with \eqref{bk} yields, for all $\varepsilon_1,\varepsilon_2\in (0,1)$, $$\mmp\Big\{\mathcal{C}_n(\varepsilon_1)\bigcap\mathcal{D}_n(\varepsilon_2)~\text{{\rm i.o.}}\Big\}=1.$$ Since
$$\mathcal{C}_n(\varepsilon_1)\bigcap\mathcal{D}_n(\varepsilon_2)\subseteq \Big\{f(\mb_n)\sum_{k=N_n+1}^{N_2(\mb_n)} \mb_n^{S_{k-1}}\eta_k >(1-\varepsilon_1)(1-\varepsilon_2)\Big\},\quad n\in\mn,$$ we arrive at \eqref{bkimpo1}.

By the property (b) of $\mathbb{B}^\ast$, $N_{n+1}\geq N_2(\mb_n)$ for large $n$ which implies that, for large $n$, the random variables $$\mb_n^{-S_{N_n}}\sum_{k=N_n+1}^{N_2(\mb_n)}\mb_n^{S_{k-1}}\eta_k=\eta_{N_n+1}+\mb_n^{\xi_{N_n}+1}\eta_{N_n+2}+\ldots+\mb_n^{\xi_{N_n+1}+\ldots+\xi_{N_2(\mb_n)-1}}\eta_{N_2(\mb_n)} %,\quad n\in\mn
$$ are independent. Hence, by the converse part of the Borel-Cantelli lemma, \eqref{bk} is a consequence of
\begin{equation}\label{eq:aux}
\sum_{n\geq 1}\mmp\Big\{f(\mb_n)\mb_n^{-S_{N_n}}\sum_{k=N_n+1}^{N_2(\mb_n)} \mb_n^{S_{k-1}}\eta_k >1-\varepsilon\Big\}=\infty.
\end{equation}

We intend to prove \eqref{eq:aux}. Fix any $\delta\in (0,1)$. For each $n\in\mn$ and $t>0$, put $$q_n(t):=2t^{-2}\Big \lfloor \log\log \frac{1}{1-\mb_n^2}\Big\rfloor.$$ For notational simplicity, we shall write $q_n$ for $q_n(t)$. Further, for each $n\in\mn$ and each nonnegative integer $k\leq q_n$ define numbers $r_{k,n}$ by $r_{0,n}:=0$, $$r_{k,n}:=\inf\Big\{j\geq r_{k-1,n}+1: \mb_n^{2\delta r_{k-1,n}}\sum_{k=r_{k-1,n}}^{j-1} \mb_n^{2k} \geq \sigma_n^2q_n^{-1}\Big\},\quad k\in\mn, k\leq q_n-1,$$ where $\sigma_n^2:=\sum_{k=0}^{N_2(\mb_n)-N_n-1}\mb_n^{2k}$, and $r_{q_n,n}:=N_2(\mb_n)-N_n+1$. One can check by a direct calculation that the numbers are well-defined and that, for $k\in\mn$, $k\leq q_n$, %$$r_{k,n}-r_{k-1,n}~\sim~\frac{\sigma_n^2}{q_n}~\sim~\frac{t^2}{2(1-\mb_n^2)\log\log (1/(1-\mb_n^2))}\to\infty,\quad n\to\infty$$ and
\begin{equation}\label{eq:aux2}
\sum_{k=0}^{r_{k,n}-r_{k-1,n}-1}\mb_n^{2k}~\sim~r_{k,n}-r_{k-1,n}~\sim~ \frac{\sigma_n^2}{q_n},\quad n\to\infty.
\end{equation}
For the latter we have used the fact that the relations $\lim_{n\to\infty}\mb_n^{2N_n}=1$ and $\lim_{n\to\infty}\mb_n^{2N_2(\mb_n)}=0$ entail $$\sigma_n^2~\sim~(1-\mb_n^2)^{-1},\quad n\to\infty.$$ For each $n\in\mn$, $k\in\mn_0$, $k\leq q_n$ and $t>0$, put
$$\tilde Z_{k,n}:=q_n^{1/2}\sigma_n^{-1}\sum_{j=r_{k-1,n}}^{r_{k,n}-1}\mb_n^{S_j}\eta_{j+1}$$ and $$Z_{k,n}:=q_n^{1/2}\sigma_n^{-1}\mb_n^{(1+\delta)r_{k-1,n}}\sum_{j=r_{k-1,n}}^{r_{k,n}-1}\mb_n^{S_j-S_{r_{k-1,n}}}\eta_{j+1}.$$ Observe that the random variables $Z_{1,n},\ldots, Z_{q_n,n}$ are independent, and $$Z_{k,n}\overset{{\rm d}}{=} q_n^{1/2}\sigma_n^{-1}\mb_n^{(1+\delta)r_{k-1,n}}\sum_{j=0}^{r_{k,n}-r_{k-1,n}-1}\mb_n^{S_j}\eta_{j+1},$$ where $\overset{{\rm d}}{=}$ denotes equality of distributions. Noting that $\lim_{n\to\infty}\mb_n^{(1+\delta)r_{k-1,n}}=1$ and then using \eqref{eq:aux2} we infer with the help of Lemma \ref{lem:aux} that
\begin{equation}\label{eq:aux1}
\lim_{n\to\infty}\mmp\{Z_{k,n}\leq x\}=\mmp\{{\rm Normal}\,(0,1)\leq x\},\quad x\in\mr.
\end{equation}

In view of $$f(\mb_n)~\sim~(\sigma_n q_n^{1/2}t)^{-1} % t^{-2}\frac{(2(1-\mb_n^2)\log\log (1/(1-\mb_n^2)))^{1/2}}{q_n}
:=\alpha_n,\quad n\to\infty,$$ it suffices to prove \eqref{eq:aux} with $\alpha_n$ replacing $f(\mb_n)$. Then with $r_{-1,n}:=0$
\begin{multline*}
\mmp\Big\{\alpha_n\mb_n^{-S_{N_n}}\sum_{k=N_n+1}^{N_2(\mb_n)} \mb_n^{S_{k-1}}\eta_k >1-\varepsilon\Big\}=\mmp\Big\{\alpha_n \sum_{k=0}^{N_2(\mb_n)-N_n-1} \mb_n^{S_k}\eta_{k+1}>1-\varepsilon\Big\}\\=\mmp\Big\{\sigma_n q^{1/2}_n t \alpha_n q_n^{-1} \sum_{k=1}^{q_n} \tilde Z_{k,n}>t(1-\varepsilon)\Big\}\geq \mmp\Big\{\tilde Z_{k,n}>t(1-\varepsilon), 1\leq k\leq q_n\Big\}\\ \geq \mmp\Big\{\tilde Z_{k,n}>t(1-\varepsilon), S_{r_{k-1,n}}-S_{r_{k-2,n}}\leq (1+\delta)(r_{k-1,n}-r_{k-2,n}), 1\leq k\leq q_n\Big\}\\\geq \mmp\Big\{Z_{k,n}>t(1-\varepsilon), S_{r_{k-1,n}}-S_{r_{k-2,n}}\leq (1+\delta)(r_{k-1,n}-r_{k-2,n}), 1\leq k\leq q_n\Big\}\\=
\prod_{k=1}^{q_n}\mmp\Big\{Z_{k,n}>t(1-\varepsilon), S_{r_{k-1,n}}-S_{r_{k-2,n}}\leq (1+\delta)(r_{k-1,n}-r_{k-2,n})\Big\}.
\end{multline*}
Using \eqref{eq:aux1} and the weak law of large numbers for random walks we conclude that, uniformly in $k\in\mn$, $k\leq q_n$,
\begin{equation}\label{eq:aux3}
\lim_{n\to\infty} \mmp\Big\{Z_{k,n}>t(1-\varepsilon), S_{r_{k-1,n}}-S_{r_{k-2,n}}\leq (1+\delta)(r_{k-1,n}-r_{k-2,n})\Big\}=\mmp\{{\rm Normal}\,(0,1)> t(1-\varepsilon)\}.
\end{equation}
Given constants $c\in (0,1)$ and $\rho\in (0,1)$ we can choose $t$ so large that $$A(t):=2t^{-2}(\log (1/c)+\rho+\log (t(1-\varepsilon)))+(1-\varepsilon)^2<1$$ and that, for large $n$,
\begin{multline*}
\log\mmp\Big\{Z_{k,n}>t(1-\varepsilon), S_{r_{k-1,n}}-S_{r_{k-2,n}}\leq (1+\delta)(r_{k-1,n}-r_{k-2,n})\Big\}\\\geq \log \mmp\{{\rm Normal}\,(0,1)> t(1-\varepsilon)\} -\rho\geq -(\log (1/c)+\rho+2^{-1}t^2(1-\varepsilon)^2 +\log (t(1-\varepsilon)))=2^{-1}t^2 A(t),
\end{multline*}
where the first inequality is a consequence of \eqref{eq:aux3}, and the second inequality follows from Lemma 12.9 on p.~349 in \cite{Peres+Moerters:2010}. Hence, for $c$, $\rho$, $t$ and $n$ as above
$$\mmp\Big\{\alpha_n\mb_n^{-S_{N_n}}\sum_{k=N_n+1}^{N_2(\mb_n)} \mb_n^{S_{k-1}}\eta_k >1-\varepsilon\Big\}\geq \exp(-2^{-1}t^2A(t)q_n)=\exp\Big(-A(t)\lfloor\log\log(1/(1-\mb_n^2)) \rfloor\Big).$$ This is the general term of a divergent series, hence \eqref{eq:aux} holds, because $$\sum_{n\geq n_0}\Big(\log \frac{1}{1-\mb_n^2}\Big)^{-A(t)}=\infty$$ by the property (c) of $\mathbb{B}^\ast$. The proof of Lemma \ref{aux200} is complete.
\end{proof}

Now we can prove Proposition \ref{lilhalf2} and Theorem \ref{main}.

\begin{proof}[Proof of Proposition \ref{lilhalf2}]
%By Lemma \ref{1}, % Claim 1 (in the proof of Proposition \ref{lilhalf}),
%$$\lim_{n\to\infty} f(\mathfrak{b}_n)\sum_{k\geq N_{2,\,\delta}(\mb_n)}\mb_n^{S_{k-1}}\eta_k=0\quad\text{a.s.}$$
Relation \eqref{lil11} is a consequence of formula \eqref{aux121} % in which we replace $b$ with $\mb_n$
and Lemmas \ref{11} and \ref{aux200}. Replacing in \eqref{lil11} $\eta_k$ with $-\eta_k$ we obtain \eqref{lil1121}.
\end{proof}

\begin{proof}[Proof of Theorem \ref{main}]
Relation \eqref{auxlimsup} follows from Propositions \ref{lilhalf} and \ref{lilhalf2}.

Recalling our convention that $\mu={\tt s}^2=1$ it remains to prove that
\begin{equation}\label{limit points}
C\bigg(\bigg(f(b)\sum_{k\geq 0}b^{S_k}\eta_{k+1}: b\in ((1-e^{-1})^{1/2},1)\bigg)\bigg)=[-1,1]\quad\text{{\rm a.s.}}
\end{equation}
To this end, we first note that the random function $b\mapsto \sum_{k\geq 1}b^{S_{k-1}}\eta_k$ is a.s.\ continuous on $[0,1)$. Indeed, while the function $b\mapsto b^{S_{k-1}}\eta_k$ is a.s.\ continuous on $[0,1)$, the latter series converges uniformly on $[0,a]$ for each $a\in (0,1)$ with probability one. This follows from the inequality $b^{S_{k-1}}\leq b^{\delta(k-1)}\leq a^{\delta(k-1)}$ which holds for large $k$ and $b\in [0,a]$ and the fact that $\me \sum_{k\geq 1} a^{\delta(k-1)}|\eta_k|<\infty$. Thus, the function $b\mapsto f(b)\sum_{k\geq 1}b^{S_{k-1}}\eta_k $ is a.s.\ continuous on $((1-e^{-1})^{1/2}, 1)$ with ${\lim\sup}_{b\to 1-}=1$ and ${\lim\inf}_{b\to 1-}=-1$. This immediately entails \eqref{limit points} with the help of the intermediate value theorem for continuous functions.
\end{proof}

\noindent {\bf Acknowledgement}. A. Iksanov and I. Samoilenko were supported by the National Research Foundation of Ukraine (project 2020.02/0014 ``Asymptotic regimes of perturbed random walks: on the edge of modern and classical probability''). A. Iksanov thanks Alexander Marynych for a useful discussion concerning the proof of Lemma \ref{3}.

\end{document}